\numberwithin{equation}{section}
\newtheorem{theorem}{Theorem}[section]
\newtheorem{corollary}[theorem]{Corollary}
\newtheorem{proposition}[theorem]{Proposition}
\newtheorem{lemma}[theorem]{Lemma}
\theoremstyle{definition}
\newtheorem{definition}[theorem]{Definition}
\newtheorem{example}[theorem]{Example}
\newtheorem{remark}[theorem]{Remark}
\def\gm{{\mathfrak m}}
\def\cn{{\mathcal N}}
\def\mathbi#1{\textbf{\em #1}}
\newcommand{\hilbp}{{\mathcal{H}\textnormal{ilb}_{\mathbi{p}(z)}^n}}
\newcommand{\hilbd}{{\mathcal{H}\textnormal{ilb}_d^n}}
\newcommand{\hilb}{{\mathcal{H}\textnormal{ilb}}}
\newcommand{\sat}{\textnormal{sat}}
\newcommand{\up}{\mathrm{e}^{+}}
\newcommand{\down}{\mathrm{e}^{-}}
\begin{document}

\title{Segments and Hilbert schemes of points}

\title{Segments and Hilbert schemes of points}

\author[F.~Cioffi]{Francesca Cioffi}
\address{Francesca Cioffi \\ Universit\`a degli Studi di Napoli \\ Dipartimento di Matematica e Applicazioni \\
Complesso univ. di Monte S. Angelo \\ Via Cintia, 80126 Napoli \\ Italy}
\email{\href{mailto:francesca.cioffi@unina.it}{francesca.cioffi@unina.it}}
\urladdr{\url{http://wpage.unina.it/cioffifr/}}

\author[P.~Lella]{Paolo Lella}
\address{Paolo Lella \\ Universit\`a degli Studi di Torino \\ Dipartimento di Matematica \\ Via Carlo Alberto 10 \\ 10123 Torino \\ Italy}
\email{\href{mailto:paolo.lella@unito.it}{paolo.lella@unito.it}}
\urladdr{\url{http://www.dm.unito.it/dottorato/dottorandi/lella/}}

\author[M.~G.~Marinari]{Maria Grazia Marinari}
\address{Maria Grazia Marinari \\ Universit\`a degli Studi di Genova \\ Dipartimento di Matematica \\ Via Dodecaneso 35 \\ 16146 Genova \\ Italy}
\email{\href{mailto:marinari@dima.unige.it}{marinari@dima.unige.it}}
\urladdr{\url{http://www.dima.unige.it/~marinari/}}

\author[M.~Roggero]{Margherita Roggero}
\address{Margherita Roggero\\ Universit\`a degli Studi di Torino \\ Dipartimento di Matematica \\ Via Carlo Alberto 10 \\ 10123 Torino \\ Italy}
\email{\href{mailto:margherita.roggero@unito.it}{margherita.roggero@unito.it}}
\urladdr{\url{http://www2.dm.unito.it/paginepersonali/roggero/}}

\subjclass[2000]{13P10, 13C99, 14C05}
\keywords{Borel ideal, segment ideal, Hilbert scheme of points, Gotzmann number, Groebner stratum, revlex term order}

\begin{abstract}
Using results obtained from the study of homogeneous ideals sharing the same initial ideal with respect to some term order, we prove the singularity of the point corresponding to a segment ideal with respect to the revlex term order in the Hilbert scheme of points in $\mathbb{P}^n$. In this context, we look inside properties of several types of \lq\lq segment\rq\rq\ ideals that we define and compare. This study led us to focus our attention also to connections between the shape of generators of Borel ideals and the related Hilbert polynomial, providing an algorithm for computing all saturated Borel ideals with the given Hilbert polynomial.  
\end{abstract}

%\begin{keyword}
%
%\end{keyword}

\maketitle

%%%%%%%%%%%%%%%%%%
%% Introduction %%
%%%%%%%%%%%%%%%%%%

\section*{Introduction}

The Hilbert scheme can be covered by some particular affine varieties \cite{CF,FR,NS,RT,LR} that 
have been called \emph{Gr\"obner strata} in \cite{LR} because they are computed from a monomial 
ideal by Gr\"obner basis techniques. The behaviour of Gr\"obner strata can give interesting 
information on the Hilbert scheme itself. 
Very recently, in \cite{R} M. Roggero showed that an open covering of the Hilbert scheme can be constructed 
from Borel ideals by avoiding to involve any term order, that is instead needed for Gr\"obner strata. This 
fact gives us further reasons to investigate Borel ideals and their very particular features.

Among Borel ideals there are several types of \lq\lq segment\rq\rq\  ideals, the definitions of which are 
already well known or arise from some interesting properties of Gr\"obner strata studied in \cite{LR} 
(Definitions \ref{segment-ideal} and \ref{def:segments}). In section 3 we characterize the existence of this 
kind of ideals in terms of the corresponding Hilbert polynomial, in some cases. In this context we need to 
focus our attention also to the shape of admissible polynomials. 

In \cite{H66} the coefficients of Hilbert polynomials are completely characterized by the numbers of 
components of certain subschemes defined by very particular ideals called \emph{fans}. In \cite{RA} these 
numbers of components are described by the shape of minimal generators of Borel ideals. Although the geometric 
meaning is contained in the fans, in section 4 we observe that this connection between coefficients of Hilbert 
polynomials and minimal generators of Borel ideals can be described without using fans, but directly by the 
combinatorial properties of Borel ideals themselves. This study led us to project an algorithm to compute all 
saturated Borel ideals with a given Hilbert polynomial. In section 5 we describe this procedure. 

In \cite{RS} and in \cite{MS} the smoothness of points of Hilbert schemes is studied by means of the dimension of the vector space of the global sections of the normal sheaf to the corresponding projective subscheme. In section 6, applying results of \cite{LR} about Gr\"obner strata, we make some new 
consideration (Theorem \ref{singular point}) on smoothness of points in the Hilbert scheme $\hilbd$ and, 
in particular, prove the main result of this paper, i.e. the point of $\hilbd$ corresponding to the segment 
ideal with respect to the revlex term order is singular (Theorem \ref{revlexpoint}). In literature 
we have not found any proof of such a result.

%%%%%%%%%%%%%%%%%%%%%
%% General setting %%
%%%%%%%%%%%%%%%%%%%%%

\section{General setting}

Let $K$ be an algebraically closed field of characteristic $0$, $S:=K[x_0,\ldots,x_n]$ the ring 
of polynomials over $K$ in $n+1$ variables endowed so that $x_0<x_1<\ldots <x_n$ and 
$\mathbb P^n_k=\textnormal{Proj}\, S$ the $n$-dimensional projective space over $K$.

A {\em term} of $S$ is a power product $x^{\alpha}:=x_0^{\alpha_0} x_1^{\alpha_1}\ldots x_n^{\alpha_n}$,
 where ${\alpha_0}, {\alpha_1},\ldots,\alpha_n$ are non negative integers. We set 
$\min(x^{\alpha}):=\min\{i : \alpha_i\not=0\}$ and $\max(x^{\alpha}):=\max\{i : \alpha_i\not=0\}$. We also 
let $\mathbb T:=\{x_0^{\alpha_0} x_1^{\alpha_1}\ldots x_n^{\alpha_n} \ \vert \ (\alpha_0, \alpha_1\ldots,\alpha_n) \in \mathbb N^{n+1}\}$ 
be the monoid of all terms of $S$.

A {\em graded structure} on $S$ is defined by assigning a {\em weight-vector} $w=(w_0,\ldots,w_n)\in\mathbb R_+^{n+1}$ 
and imposing $v_w(x^\alpha)=\sum\limits_{i=0}^n w_i\alpha_i.$ For each non negative integer $t$, $S_t$ is the 
$K$-vector space spanned by $\{x^\alpha\in\mathbb T: \, v_w(x^\alpha)=t\}$. The {\em standard grading} 
corresponds to $w=(1,\ldots,1)$ and, unless otherwise specified, we will always consider it.

For any $N\subseteq \mathbb T$, $N_t$ is the set of the $t$-degree elements of $N$ and 
$\lambda_{i,t}(N):=\vert \{x^{\alpha} \in N_t : i\leq \min(x^{\alpha})\} \vert$ denotes the cardinality of the 
subset of terms of $N_t$ which are not divided by $x_0,\ldots,x_{i-1}$. 
%Note that if $N\subset \mathbb T_{\tilde{t}}$ then it holds $\lambda_{i,t}(N)=0$ for %every $t\neq \tilde{t}$, so we can simply write $\lambda_i(N)$. 
For any homogeneous ideal $I\subseteq S$, $I_t$ is the vector space of the homogeneous polynomials in $I$ of 
degree $t$; $I_{\leq t}$ and $I_{\geq t}$ are the ideals generated by the homogeneous polynomials of $I$ of 
degree $\leq t$ and $\geq t$, respectively.

Fixed any {\em term-order} $\preceq$ on $\mathbb T,$ each $f\in S$ has a unique ordered representation 
$f=\sum\limits_{i=1}^s c(f,\tau_i)\tau_i$, where $c(f,\tau_i)\in K^*$, $\tau_i\in\mathbb T, \tau_1\succ \cdots\succ\tau_s$, 
$T(f):=\tau_1$ is the {\em maximal term} of $f$. For any $F\subset S, \, T\{F\}:=\{T(f) : \, f\in F\}$,  
$T(F):=\{\tau T(f) : \, f\in F, \tau\in\mathbb T\}$ and $\cn(F):=\mathbb T\setminus T(F)$. For any ideal 
$I\subset S, \, T\{I\}=T(I)$ and $\mathcal N(I)$ is an
{\em order ideal}, often called \emph{sous-\'escalier}  or \emph{Gr\"obner-\'escalier} of $I$. 
A subset $G\subset I$ is a \emph{Gr\"obner-basis} of $I$ if $T(G)=T(I)$ (see for instance \cite{Mo}). 

For a monomial ideal $I$, $G(I)$ denotes the unique set of minimal generators of $I$ consisting of terms.

\begin{remark}\label{term orders} \rm 
(1) In our setting, we consider on $\mathbb T$ mainly (standard) graded term orders. In particular, given two 
terms $x^{\alpha}$ and $x^{\beta}$ of $\mathbb T$ of the same degree $t$, we say that $x^{\alpha}$ is less 
than $x^{\beta}$ with respect to: 
\begin{itemize}
\item[(i)] {\em lex} order if $\alpha_k<\beta_k$, where $k=\max\{i\in\{0,\ldots,n\} : \alpha_i\not= \beta_i \}$; 
\item[(ii)] {\em revlex} order if $\alpha_h>\beta_h$, where $h=\min\{i\in\{0,\ldots,n\} : \alpha_i\not= \beta_i \}$;
\item[(iii)] a {\em reverse} order if $\alpha_0>\beta_0$ or $\alpha_0=\beta_0$ and $\dfrac{x^{\alpha}}{x_0^{\alpha_0}}\preceq \dfrac{x^{\beta}}{x_0^{\beta_0}}$, where $\preceq$ is any graded term order on $\mathbb T\cap K[x_1,\ldots,x_n]$.
Recall that a reverse order is well suited for the homogeneization of a Gr\"obner basis \cite{CLO} and that 
revlex is a particular reverse order.
\end{itemize}

(2) With respect to the term orders introduced in (1), for every positive integers $j$ and $n\geq 2$, 
$\mathbb T_j$ can be decomposed in increasing order as follows, where we let
 $\mathbb T(n):=\mathbb T \cap K[x_0,\ldots,x_{n-1}]$:
\begin{itemize} 
 \item[] \emph{lex:}
\begin{equation*}\tag{$\star$}
\mathbb T_j = \mathbb T(n)_j\sqcup \mathbb T_{j-1}x_n = \mathbb T(n)_j\sqcup x_n[\mathbb T(n)_{j-1} \sqcup\mathbb T(n)_{j-2}x_n]= \ldots = \bigsqcup\limits_{i=0}^{j}\mathbb T(n)_{j-i}x_n^i
\end{equation*}
\item[] \emph{revlex:}
\begin{equation*}\tag{$\star\star$}
\begin{split}
\mathbb T_j &= x_0\mathbb T_{j-1}\sqcup \{\tau\in\mathbb T_j: \min(\tau)\geq1\} = 
 x_0\mathbb T_{j-1}\sqcup x_1\{\tau\in\mathbb T_{j-1}: \min(\tau)\geq1\} \sqcup \\
 & \sqcup \{\tau\in\mathbb T_j: \min(\tau)\geq2\}= \ldots = \bigsqcup\limits_{i=0}^{n}x_i \{\tau\in\mathbb T_{j-1}: \min(\tau)\geq i\}
\end{split}
\end{equation*}
\item[] \emph{reverse:}
\begin{equation*}\tag{$\star\star\star$}
\mathbb T_j = x_0\mathbb T_{j-1}\sqcup \{\tau\in\mathbb T_j: \min(\tau)\geq1\}
\end{equation*}
\end{itemize}

(3)\label{segrl}
For each positive integer $j$ and $\omega \in \{1, \ldots , \binom{n+j}{j} -1 \}$, let 
$\Lambda_{\omega}:=\Lambda_{\omega , j}$ be the set of the $\omega$ smallest terms of $\mathbb T_j$ w.r.t. 
revlex order. Thus, from $(\star \star)$ of (2) it follows straightforward that
\[
\Lambda_{\omega} = \bigsqcup\limits_{i=0}^{\gamma(\omega)} x_i\{\tau\in\mathbb T_{j-1}: \, \min(\tau)\geq i\}\sqcup x_{\gamma(\omega)+1}\{\tau_1,\ldots,\tau_{\beta(\omega)}\}
\]
where, $\gamma(\omega)+1=\min\{ t\in \mathbb N : \omega \leq \sum\limits_{\ell=0}^{t} \binom{j-1+n-\ell}{j-1}\}$, 
$\beta(\omega):= \omega - \sum\limits_{\ell =0}^{\gamma (\omega)} \binom{j-1+n-\ell}{j-1}$ and $\tau_1,\ldots \tau_{\beta(\omega)}$ 
are the smallest $\beta(\omega)$ terms of $\mathbb T_{j-1}$ satisfying $\min(\tau_i)\geq \gamma(\omega)+1$, 
for every $1\leq i\leq \beta(\omega)$.

(4) Fixed any term order $\preceq$ on $\mathbb T$ and any weight vector $w$, the {\em weighted term order} 
$\preceq_w$ is defined as follows:
\[
x^\alpha\prec_w x^\beta \, \, \textrm{if} \, \, v_w(x^\alpha)<v_w(x^\beta) \, \, \textrm{or} \, \, v_w(x^\alpha)=v_w(x^\beta) \, \, \textrm{and} \, \, x^\alpha\prec x^\beta.
\]
Speaking of $w$-term order we understand $\preceq$ to be the graded lex order.
\end{remark}

Let $I\subset S$ be any homogeneous ideal. Then, $H_{S/I}(t)$ denotes the Hilbert function of the graded algebra 
$S/I$. It is well known that there are a polynomial $p_{S/I}(z)\in \mathbb{Q}[z]$, called 
\emph{Hilbert polynomial}, and positive integers $\rho_H:=\min\{t\in \mathbb N \ \vert \ H_{S/I}(t')=p_{S/I}(t'), \forall \ t'\geq t\}$,
$\alpha_H:=\min\{t\in \mathbb N \ \vert \ H_{S/I}(t) < \binom{n+t}{t} \}$ called respectively 
\emph{regularity of the Hilbert function} $H$ and \emph{initial degree of $H$} (or also of $I$). For convenience,
 we will also say that $p_{S/I}(z)$ is the Hilbert polynomial {\em for} $I$ or that $I$ is an ideal {\em with} 
Hilbert polynomial $p_{S/I}(z)$. If $I$ is not Artinian, set $\Delta H_{S/I}(t):= H_{S/I}(t)-H_{S/I}(t-1)$, for $t>0$, 
and $\Delta H_{S/I}(0):=1$; we use an analogous notation for Hilbert polynomials. If $h$ is a linear form 
general on $S/I$, then it is easy to prove that $p_{{S}/{(I,h)}}=\Delta p_{{S}/{I}}$. 

Polynomials $p(z)\in \mathbb Q[z]$ that are Hilbert polynomials of projective subschemes are called 
{\em admissible} and are completely characterized in \cite{H66} by the fact that they can be always written in 
a unique form of the following type (see \cite{H66,M}), where $\ell$ is the degree of $p(z)$ and $m_0\geq m_1\geq \cdots\geq m_{\ell}\geq 0$ are integers: 
\[
p(z)=\sum\limits_{i=0}^{\ell} \binom{z+i}{i+1} - \binom{z+i-m_i}{i+1}.
\]

The saturation of a homogeneous ideal $I\subset S$ is $I^{sat}:=\{f \in S \ \vert \ \forall \ i\in {0,\ldots,n}, \exists\ k_i : x_i^{k_i}f \in I\}= \cup_{h\geq 0} (I:\gm^h)$, 
where $\gm=(x_0,\ldots,x_n)$, and $I$ is saturated if $I=I^{sat}$.

If $X\subset \mathbb{P}^n_K$ is a projective subscheme, $reg(X)$ is its {\em Castelnuovo-Mumford regularity}, 
i.e $reg(X)=\min\{t\in \mathbb N\ \vert \ H^i(\mathcal I_X(t'-i))=0, \forall \ t'\geq t \}$. 

An ideal $I\subset S$ is $m$-regular if the $i$-th syzygy module of $I$ is generated in degree $\leq m+i$ and 
the regularity $reg(I)$ of $I$ is the smallest integer $m$ for which $I$ is $m$-regular. If $I$ is saturated 
and defines a scheme $X$, then $reg(I)=reg(X)$ and we set $H_X(t):=H_{S/I}(t)$ and $p_X(z):=p_{{S}/{I}}(z)$. 

For an admissible polynomial $p(z)$, the {\em Gotzmann number} $r$ is the best upper bound for the 
Castelnuovo-Mumford regularity of a scheme having $p(z)$ as Hilbert polynomial and is computable by using the 
following unique form of an admissible polynomial:
\[
p(z)=\binom{z+a_1}{a_1}+\binom{z+a_2-1}{a_2}+\ldots+\binom{z+a_r-(r-1)}{a_r},
\]
where $a_1\geq a_2\geq \ldots \geq a_r\geq 0$. We refer to \cite{Gr} for an overview of these arguments. 

\begin{example}\label{numeroGgrado1}\rm  
If $p(z)=dz+1-g$ is an admissible polynomial, then its Gotzmann number is $r=\binom{d}{2}+1-g$. Indeed, we get 
\[
p(z)=\binom{z+1}{1}+\ldots+\binom{z+1-(d-2)}{1}+\binom{z+0-(d-1)}{0}+\ldots+\binom{z+0-\binom{d-2}{2}+g}{0}.
\] 
\end{example}

%%%%%%%%%%%%%%%%%%%%%%%%%%%%%%%%%%%%%%%%%%%%%%%%%%
%% Results on Borel ideals and Gr\"obner strata %%
%%%%%%%%%%%%%%%%%%%%%%%%%%%%%%%%%%%%%%%%%%%%%%%%%%

\section{Results on Borel ideals and Gr\"obner strata}

\begin{definition}\rm 
(1) For any $x^{\alpha}\in \mathbb{T}$ such that $\alpha_j>0$, the terms obtained from $x^{\alpha}$ via a 
$j$-th {\em elementary move} are:
\begin{itemize}
\item[(i)] $e_j^+(x^{\alpha}):=x_0^{\alpha_0}\ldots x_j^{\alpha_j-1}x_{j+1}^{\alpha_{j+1}+1}\ldots x_n^{\alpha_n}$,
 for any $j\in \{0,\ldots,n-1\}$; 
\item[(ii)] $e_j^-(x^{\alpha}):=x_0^{\alpha_0}\ldots x_{j-1}^{\alpha_{j-1}+1} x_j^{\alpha_j-1}\ldots x_n^{\alpha_n}$, 
 for any $j\in \{1,\ldots,n\}$,
\end{itemize}
and for each positive integer $a$ we will denote by $(\down_j)^a,(\up_{j})^a$ the corresponding elementary move 
applied $a$ times. 

(2) For any positive integer $t$, $<_B$ denotes the partial order on $\mathbb{T}_t$ given by the transitive 
closure of the relation: \ 
$\down_{j}(x^{\beta}) < x^{\beta}$, i.e $x^{\alpha} <_B x^{\beta}$ if $x^{\alpha}$ is gotten from $x^{\beta}$ via a 
finite sequence of elementary moves $\down_{j}$. 

(3) A set $B\subset \mathbb T_t$ is a {\em Borel set} \ if, for every $x^{\alpha}$ of $B$ and $x^{\beta}$ of 
$\mathbb{T}_t$, $x^{\alpha} <_B x^{\beta}$ implies that $x^{\beta}$ belongs to $B$.

(4) A monomial ideal $J\subset S$ is a {\em Borel ideal} \ if, for every degree $t$, $J\cap \mathbb T_t$ is a 
Borel set.  
\end{definition}

The combinatorial property by which Borel ideals are defined is also called {\em strong stability}. It has been 
first introduced in \cite{Gu} and later in \cite{H66}, where the ideals satisfying it are called {\em balanced}. 
In characteristic $0$ it is equivalent to the property for an ideal $J$ of being fixed by lower triangular 
matrices, from which the name Borel ideals derives.

From the definition it follows immediately that, if $B\subset \mathbb T_t$ is a Borel set, then the set 
$N:=\mathbb T_t\setminus B$ has the  property that for every $x^{\gamma}\in N$ and $x^{\delta}\in\mathbb T_t$, 
with $x^{\delta} <_B x^{\gamma}$ it holds $x^{\delta}\in N$, that is $N$ is closed w.r.t. elementary moves 
$\down_j$. In particular, if $J$ is a Borel ideal, then for every integer $t\geq 0$, $\mathcal N(J)_t$ is 
closed w.r.t. elementary moves $\down_j$ and $J_t$ is closed w.r.t. elementary moves $\up_j$.

\begin{remark}\label{cipa}\rm
Note that, for every term order $\preceq$, if $x^{\alpha},x^{\beta}\in\mathbb T_t$ satisfy 
$x^{\alpha}<_B x^{\beta}$ then $x^{\alpha}\prec x^{\beta}$. Namely, as $x^{\alpha}<_B x^{\beta}$ means that 
there is a finite number of elementary moves $\down_j$ connecting $x^{\beta}$ to $x^{\alpha}$, assuming that 
$x_j\mid x^{\beta}$ for a suitable $0\leq j\leq n$, we can verify our contention for $x^{\alpha}=\down_j(x^{\beta})$. 
Setting $\tau:=\frac{x^{\beta}}{x_j}$ and writing $x^{\alpha}=\down_j(x^{\beta})=x_{j-1}\tau, \, x^{\beta}=x_j\tau,$ 
we get $x^{\alpha}\prec x^{\beta}$ as $x_{j-1}\prec x_j$. 
\end{remark}

\begin{proposition}\label{reeves}
For a Borel ideal $J\subset S$, 
\begin{itemize}
\item[(i)] in our notation $J^{sat}$ is obtained by setting $x_0=1$ in the minimal generators of $J$;
\item[(ii)] \label{HS} the Krull dimension of ${S}/{J}$ is equal to $\min\{\max(x^{\alpha}) : x^{\alpha} \in J\}=\min\{ i\in \{0,\ldots,n\} : x_i^t \in J, \text{ for some } t\}$;
\item[(iii)] the regularity of $J$ is equal to the maximum degree of its minimal generators.
\end{itemize}  
\end{proposition}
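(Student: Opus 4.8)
The three assertions are essentially independent, so I would treat them separately, in each case exploiting only the closure of $J$ under the up-moves $\up_j$ (strong stability). The plan for (i) is first to reduce the saturation with respect to $\gm=(x_0,\dots,x_n)$ to the saturation with respect to the single smallest variable, i.e.\ to prove $J^{sat}=(J:x_0^\infty)$. The inclusion $J^{sat}\subseteq (J:x_0^\infty)$ is immediate since $x_0\in\gm$. For the reverse inclusion I take a term $x^\gamma$ with $x_0^k x^\gamma\in J$ and apply strong stability repeatedly: from $x_0\mid x_0^k x^\gamma$ one gets $\tfrac{x_j}{x_0}\cdot x_0^k x^\gamma=x_0^{k-1}x_j x^\gamma\in J$ for every $j\geq 1$, and iterating on the remaining factors $x_0$ yields $x_j^k x^\gamma\in J$ for all $j$. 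Hence $\gm^h x^\gamma\subseteq J$ for $h$ large (every term of degree $h\geq (n+1)(k-1)+1$ in $\gm$ is divisible by some $x_j^k$), so $x^\gamma\in J^{sat}$. The second step identifies $(J:x_0^\infty)$ with the ideal obtained by setting $x_0=1$ in $G(J)$: writing each minimal generator as $u=x_0^{a_0}v$ with $x_0\nmid v$ one has $v\in(J:x_0^\infty)$, and conversely, if $x_0^k x^\gamma$ is divisible by some $u=x_0^{a_0}v$, then comparing the parts prime to $x_0$ gives $v\mid x^\gamma$; thus $(J:x_0^\infty)$ is generated by the terms $v=u|_{x_0=1}$, which is exactly the claim.

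For (ii) I would first check that the two displayed minima agree. If $x^\alpha\in J$ has $\max(x^\alpha)=m$, then repeatedly applying the moves $\up_j$ pushes all of its exponents up to $x_m$, so $x_m^{\deg x^\alpha}\in J$; this gives $\min\{i : x_i^t\in J\}\leq \min\{\max(x^\alpha):x^\alpha\in J\}$, while the opposite inequality is trivial because $\max(x_i^t)=i$. Call this common value $d$. By the same up-move argument no term of $J$ can have $\max<d$, so every term of $J$ lies in $(x_d,\dots,x_n)$, i.e.\ $J\subseteq(x_d,\dots,x_n)$; on the other hand $x_i^t\in J$ for all $i\geq d$ (again by $\up_j$), so $(x_d,\dots,x_n)\subseteq\sqrt{J}$. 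Hence $\sqrt{J}=(x_d,\dots,x_n)$ and
\[
\dim S/J=\dim S/\sqrt J=\dim K[x_0,\dots,x_{d-1}]=d.
\]

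For (iii) the lower bound $reg(J)\geq\max\{\deg u : u\in G(J)\}$ is clear from the definition, since the $0$-th syzygies (the generators) already occupy that top degree. The substantial content is the upper bound, and here I would invoke the explicit minimal free resolution of a strongly stable ideal due to Eliahou and Kervaire, which is minimal precisely because $\mathrm{char}\,K=0$ makes $J$ Borel-fixed: its $i$-th term is generated by symbols $(u;\,x_{j_1}\cdots x_{j_i})$ with $j_1<\dots<j_i<\max(u)$, of internal degree $\deg u+i$. Thus the $i$-th syzygy module is generated in degree $\leq \max_u\deg u+i$, which is exactly the condition that $J$ be $m$-regular for $m=\max_u\deg u$; combined with the lower bound this forces equality. (Equivalently, one may quote the Bayer--Stillman characterization of regularity for Borel-fixed ideals.)

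The combinatorics needed for (i) and (ii) are elementary once strong stability is phrased as closure under the up-moves $\up_j$, and I do not expect any real difficulty there. The genuinely hard input is (iii): controlling \emph{all} higher syzygies rather than merely the generators is what requires the full strength of the Eliahou--Kervaire construction, and that is the step I would expect to have to justify most carefully.
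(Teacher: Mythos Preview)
Your arguments for all three parts are correct. The paper itself does not prove this proposition at all: it simply refers the reader to Reeves for (i), to Herzog--Srinivasan (or Cox--Little--O'Shea) for (ii), and to Bayer--Stillman for (iii). So your write-up is strictly more informative than the paper's.

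A few brief comparisons. For (i), your reduction $J^{\sat}=(J:x_0^\infty)$ via the up-moves, followed by the monomial identification $(J:x_0^\infty)=\langle u|_{x_0=1}:u\in G(J)\rangle$, is exactly the standard route and is the content behind the cited ``Property 2'' in Reeves. For (ii), the paper's references ultimately compute $\dim S/J$ via the associated primes of a Borel ideal; your approach of showing directly that $\sqrt{J}=(x_d,\dots,x_n)$ is slightly more elementary and entirely adequate here. For (iii), the paper cites Bayer--Stillman's criterion (which you mention as an alternative); your choice to invoke the Eliahou--Kervaire resolution is equally standard and arguably more transparent, since it exhibits the syzygy degrees explicitly rather than going through a cohomological or generic-coordinate criterion. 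Either citation suffices, and you are right that this is the only part where a genuinely nontrivial external input is unavoidable.
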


\begin{proof}
(i) For example, see \cite[Property 2]{RA}. 

(ii) This result follows straightforward from Lemma 3.1(a) of \cite{HS} or from Corollary 4, section 5, 
chapter 9 of \cite{CLO}. Thus, if $J$ is saturated and $\ell$ is the degree of the Hilbert polynomial of $J$, 
we get $\ell=\min\{ i\in \{0,\ldots,n\} : x_i^t \in J, \text{ for some } t\}-1$.

(iii) See \cite[Proposition 2.9]{BS}.
\end{proof}

\begin{remark}\label{espansione} \rm
Let $B\subset \mathbb T_t$ be a non-empty Borel set, $N:=\mathbb T_t\setminus B$ and $J=(B)$ the Borel ideal 
generated by the terms of $B$, so that $\mathcal N(J)_t=N$. Thus 
\begin{eqnarray*}
& \mathcal N(J)_{t+1}= x_0 N \sqcup x_1 \{x^{\alpha}\in N : 1\leq \min(x^{\alpha})\} \sqcup & \\
& \sqcup x_2\{x^{\alpha} \in N : 2\leq \min(x^{\alpha})\} \sqcup \ldots \sqcup x_{n-1} \{x^{\alpha} \in N : n-1\leq \min(x^{\alpha})\} &
\end{eqnarray*}
and $\mathbb T_{t+1}\setminus\mathcal N(J)_{t+1}$ is a Borel set.
In particular, if $J$ is a Borel ideal and $N:=\mathcal N(J)_t$, for every degree $t$ we have 
(see \cite[Theorem 3.7]{MR1}): 
\begin{eqnarray*} 
& \mathcal N(J)_{t+1} \subseteq \mathcal N(J_{\leq t})_{t+1}= x_0 N \sqcup x_1 \{x^{\alpha}\in N : 1\leq \min(x^{\alpha})\} \sqcup & \\
& \sqcup x_2\{x^{\alpha} \in N : 2\leq \min(x^{\alpha})\} \sqcup \ldots \sqcup x_{n-1} \{x^{\alpha} \in N : n-1\leq \min(x^{\alpha})\} &
\end{eqnarray*}
from which $\mid\mathcal N(J)_{t+1}\mid \leq \mid \mathcal N(J_{\leq t})_{t+1}\mid = \sum\limits_{i=0}^{n-1}\lambda_{i,t}(N(J_{\leq t}))$ 
and $G(J)_{t+1}=\mathcal N(J_{\leq t})_{t+1} \setminus \mathcal N(J)_{t+1}$.
\end{remark}

\begin{definition}\rm 
For each Borel subset $A\subset \mathbb T_t$ the {\em minimal} elements of $A$, w.r.t. $<_B$,  are the terms 
$x^{\alpha}\in A$ such that  $\down_j(x^\alpha)\notin A$ for every
$j>0$ with $\alpha_j>0$ and the {\em maximal} elements outside $A$, w.r.t. $<_B$, are the terms 
$x^{\beta}\notin A$ such that $\up_j(x^\beta)\in A$ for every $j>0$ with $\beta_j>0$.
\end{definition}

\begin{remark}\label{cicca}\rm
Let $B\subset \mathbb T_t$ be a Borel subset, if $x^{\alpha}\in B, x^{\beta}\notin B$ are respectively a minimal 
element of $B$ and a maximal element outside $B$ w.r.t. $<_B,$ then both
$B\setminus \{x^\alpha\}$ and $B\cup\{x^\beta\}$ are Borel subsets of $\mathbb T_t$ as by definition both are 
closed w.r.t. elementary moves $\up_{j}$.  
\end{remark} 

\begin{proposition}\label{cicca1}
Let $p(z)$ be an admissibile polynomial with Gotzmann number $r$ and let $J\subset S$ be a Borel ideal with 
$p(z)$ as Hilbert polynomial. Then, for each $s>r$, a minimal term of $J_s$ w.r.t. $<_B$ is divided by  $x_0$.  
\end{proposition}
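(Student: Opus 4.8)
The plan is to argue by contradiction. Fix $s>r$ and suppose that $J_s$ has a minimal term $x^\alpha$ (with respect to $<_B$) which is \emph{not} divisible by $x_0$, i.e. $\alpha_0=0$; I will derive $s\le r$. Put $m:=\min(x^\alpha)\ge 1$ and $\mu:=x^\alpha/x_m$, so that $x^\alpha=x_m\mu$ with $\mu$ again $x_0$-free. The whole point is to convert the \emph{combinatorial} extremality of $x^\alpha$ inside $J_s$ into a statement about minimal generators, and then to read off a degree bound from the Gotzmann number via regularity.

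The first step is to observe that such an $x^\alpha$ is forced to be a minimal generator of $J$. Minimality of $x^\alpha$ in the Borel set $J_s$ gives $\down_m(x^\alpha)=x_{m-1}\mu\notin J$; since $J$ is an ideal this already yields $\mu\notin J$ (were $\mu\in J$, its multiple $x_{m-1}\mu$ would lie in $J$). Now I use that $J$ is Borel: if $x^\alpha/x_i\in J$ for some $i$ with $\alpha_i>0$ (necessarily $i\ge m$), then replacing one $x_m$ by $x_i$ is an $\up$-type move, under which $J$ is closed, and it produces exactly $\mu=x^\alpha/x_m\in J$, contrary to what we just proved. Hence no divisor $x^\alpha/x_i$ lies in $J$, i.e. $x^\alpha\in G(J)$.

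Next I would bound $\deg x^\alpha=s$ through the saturation. By Proposition \ref{reeves}(i), $J^{sat}$ is a saturated Borel ideal with the same Hilbert polynomial $p(z)$; as it defines a subscheme with that Hilbert polynomial, the Gotzmann number bounds its Castelnuovo--Mumford regularity, $\mathrm{reg}(J^{sat})\le r$, and then Proposition \ref{reeves}(iii) bounds the degree of every minimal generator of $J^{sat}$ by $r$. If $J$ happens to be saturated the argument ends here: $x^\alpha\in G(J)=G(J^{sat})$ forces $s\le r$, a contradiction. In general it therefore suffices to prove that $x^\alpha$ stays a minimal generator after saturating, i.e. that $\mu\notin J^{sat}$; by the description of $J^{sat}$ (setting $x_0=1$) this means $x_0^h\mu\notin J$ for every $h\ge 0$.

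This promotion of $x^\alpha$ from $G(J)$ to $G(J^{sat})$ is the step I expect to be the main obstacle, and it is where the substance of the non-saturated case lies. The minimal-term hypothesis disposes of $h=1$ at once: since $\down_m(x^\alpha)=x_{m-1}\mu\notin J$ and $x_0\mu\le_B x_{m-1}\mu$, membership $x_0\mu\in J$ would force $x_{m-1}\mu\in J$ by the Borel property, so $x_0\mu\notin J$. The real difficulty is to rule out a deeper re-entry $x_0^h\mu\in J$ with $h\ge 2$, that is, to forbid an iterated $x_0$-descendant of $x^\alpha$ from falling back into $J$ while $x^\alpha$ itself remains a minimal term. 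I would attack this by exploiting that $s-1\ge r\ge \mathrm{reg}(J^{sat})$: in that range $(J^{sat})_{s-1}$ carries no minimal generators, so $\mu\in(J^{sat})_{s-1}$ could be factored as $\mu=x_k\eta$ with $\eta\in(J^{sat})_{s-2}$, and transporting this factorization back along $<_B$ should exhibit some $\down_j(x^\alpha)$ lying in $J$, contradicting minimality. Making this reduction airtight — and settling the borderline degree $s-1=r$ — is the crux; once $\mu\notin J^{sat}$ is secured the remaining manipulations with the elementary moves $\down_j,\up_j$ are routine.
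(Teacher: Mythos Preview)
Your core argument---that a $<_B$-minimal, $x_0$-free term $x^\alpha\in J_s$ must lie in $G(J)$, hence $s\le\mathrm{reg}(J)$---is correct and is exactly the paper's proof read in contrapositive. The paper simply asserts $s>r\ge\mathrm{reg}(J)$, writes $x^\alpha=\tau\cdot x^\gamma$ with $x^\gamma\in G(J)$ and $\deg\tau>0$, and then replaces some $x_j\mid\tau$ (with $j>0$) by $x_0$ to produce $x^{\alpha'}<_B x^\alpha$ in $J_s$. Your route ``minimal $\Rightarrow$ generator $\Rightarrow$ degree $\le\mathrm{reg}(J)\le r$'' is the same idea; you have not taken a genuinely different approach.

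Where you diverge from the paper is in trying to handle non-saturated $J$. The paper does not: its inequality $r\ge\mathrm{reg}(J)$ already presupposes saturation (the Gotzmann number bounds the regularity of the \emph{scheme}, i.e.\ of $J^{\mathrm{sat}}$). Your attempt to promote $x^\alpha$ from $G(J)$ to $G(J^{\mathrm{sat}})$ cannot be completed, because the statement is actually \emph{false} without a saturation hypothesis. For example, $J=(x_2,\,x_1^2,\,x_0^3x_1)\subset K[x_0,x_1,x_2]$ is Borel with $p(z)=1$ and $r=1$, yet at $s=2$ the term $x_1^2$ is minimal in $J_2$ and is not divisible by $x_0$. (Here $\mu=x_1\in J^{\mathrm{sat}}=(x_1,x_2)$, so your desired conclusion $\mu\notin J^{\mathrm{sat}}$ fails; this is precisely your ``borderline'' case $s-1=r$.) So the extra work after ``If $J$ happens to be saturated the argument ends here'' should be dropped: the proposition is to be read---and is only ever applied in the paper---with $J$ saturated.
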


\begin{proof}
As $s>r\geq reg(J)$, for each $x^\alpha\in J_s$ it holds $x^\alpha=\tau\cdot x^\gamma$
for some $x^\gamma \in G(J)$ and $\tau\in \mathbb T$ with $\deg(\tau)>0$. If $x_0\nmid x^\alpha$, let $j>0$ be 
such that $x_j\mid\tau$, then $x^{\alpha'}:=\tau'\cdot x^\gamma$, with $\tau'=\frac{\tau\cdot x_0}{x_j}$, 
satisfies $x^{\alpha'}\in J_s$ and $x^{\alpha'}<_B x^\alpha,$ contradicting the minimality of $x^\alpha$.
\end{proof}

Given an admissible polynomial $p(z)$, a term order $\preceq$ and a monomial ideal $J$ with $p(z)$ as Hilbert 
polynomial, the {\em Gr\"obner stratum} $\mathcal St(J,\preceq)$ in the Hilbert scheme $\hilbp$ of subschemes 
of $\mathbb P^n$ with Hilbert polynomial $p(z)$ is an affine variety that parameterizes the family of ideals 
having the same initial ideal $J$ with respect to $\preceq$ \cite{RT,FR,LR,CF,NS}. When only homogeneous ideals 
are concerned, we write $\mathcal St_h(J,\preceq)$. Now, we recall briefly the construction of 
$\mathcal St(J,\preceq)$, and hence of $\mathcal St_h(J,\preceq)$, referring to Definition 3.4 of \cite{LR}, 
although here we omit many details that make the procedure more efficient. 

For any term $x^{\alpha}$ of $G(J)$ we set $F_{\alpha}:= x^{\alpha}+\sum_{\{x^{\beta}\in \mathcal N(J)\ :\ x^{\beta} < x^{\alpha}\}} c_{\alpha \beta} x^{\beta}$, 
considering $c_{\alpha\beta}$ as new variables. Then, we reduce all the $S$-polynomials $S(F_{\alpha},F_{\alpha'})$ 
with respect to $\{F_{\alpha}\}_{x^{\alpha}\in J}$. The ideal $\mathcal A(J)$ generated in 
$K[c_{\alpha \beta}]$ by the $x$-coefficients of the reduced polynomials is the defining ideal of 
$\mathcal St(J,\preceq)$ and does not depend on the reduction choises. If in particular we set 
$F_{\alpha}:= x^{\alpha}+\sum_{\{x^{\beta}\in \mathcal N(J)_t\ :\ x^{\beta} < x^{\alpha}\}} c_{\alpha \beta} x^{\beta}$, 
where $t$ is the degree of $x^{\alpha}$, then we obtain the ideal of $\mathcal St_h(J,\preceq)$.

For properties of Gr\"obner strata we refer to \cite{RT,FR,LR}, but it is noteworthy to point out an unexpected 
feature of Gr\"obner strata, i.e. they are homogeneous varieties with respect to some non-standard graduation 
\cite{FR,LR}. Thus, the embedding dimension of $\mathcal St_h(J,\preceq)$, denoted by $ed(\mathcal St_h(J,\preceq))$, 
is the dimension of the Zariski tangent space of the stratum at the origin and can be computed by the same 
procedure which produces Gr\"obner strata. In fact, the ideal $\mathcal L(J)$ generated in $K[c_{\alpha \beta}]$ 
by the linear components of the generators of $\mathcal A(J)$, as computed above, defines the Zariski tangent 
space of the stratum at the origin (Theorems 3.6(ii) and 4.3 of \cite{LR}). This fact gives a new tool for 
studying the singularities of Hilbert schemes. 

%%%%%%%%%%%%%%
%% Segments %%
%%%%%%%%%%%%%%

\section{Segments}

\begin{definition}\label{segment-ideal}\rm 
A set $B\subset \mathbb T_t$ is a {\em segment} with respect to (w.r.t., for short) a term order $\preceq$ on 
$\mathbb T$ if, whenever a term $\tau$ belongs to $B$, all the $t$-degree terms which are greater than $\tau$ 
belong to $B$. A monomial ideal $I$ is a {\em segment ideal} w.r.t. $\preceq$, if $I\cap \mathbb T_t$ is a 
segment w.r.t. $\preceq$, for every $t\geq 0$.
\end{definition}

\begin{lemma}\label{gradi}
Let $I\subset S$ be a saturated Borel ideal, $\preceq$ any term order on $\mathbb T$ and $p>q$ integers. 
If $I_p$ is a segment then $I_q$ is a segment too. 
\end{lemma}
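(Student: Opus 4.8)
The plan is to establish the lemma by contraposition, after isolating the single algebraic ingredient that makes the statement work: for a saturated Borel ideal the smallest variable $x_0$ is a non-zero-divisor modulo $I$. Concretely, I first want to show that $(I:x_0)=I$, equivalently that $x_0\tau\in I$ forces $\tau\in I$ for every term $\tau$. By Proposition \ref{reeves}(i) the saturation $I^{sat}$ is obtained by setting $x_0=1$ in the minimal generators of $I$; since $I$ is saturated, $I=I^{sat}$, and this forces every element of $G(I)$ to be free of $x_0$ (if a minimal generator were $x_0^a m$ with $a>0$ and $x_0\nmid m$, then $m=\left.(x_0^a m)\right|_{x_0=1}\in I^{sat}=I$ would divide $x_0^a m$, contradicting minimality). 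Now if $x_0\tau\in I$, some $g\in G(I)$ divides $x_0\tau$; as $x_0\nmid g$ and multiplication by $x_0$ does not change the exponents of $x_1,\dots,x_n$, in fact $g\mid\tau$, whence $\tau\in I$. Iterating gives $(I:x_0^k)=I$ for every $k\geq 0$.

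The rest is a short order argument. Suppose, for contradiction, that $I_q$ is not a segment, so there exist $\sigma\in I_q$ and $\tau\in\mathbb T_q\setminus I_q$ with $\tau\succ\sigma$. Multiply both by $x_0^{p-q}$. Because $\sigma\in I$ and $I$ is an ideal, $x_0^{p-q}\sigma\in I_p$; because $\tau\notin I$ and $(I:x_0^{p-q})=I$, we get $x_0^{p-q}\tau\notin I_p$; and because $\preceq$ is a term order, multiplication by the monomial $x_0^{p-q}$ preserves strict inequalities, so $x_0^{p-q}\tau\succ x_0^{p-q}\sigma$. Thus $I_p$ contains the element $x_0^{p-q}\sigma$ but not the strictly larger $p$-degree term $x_0^{p-q}\tau$, contradicting the hypothesis that $I_p$ is a segment. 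Hence $I_q$ is a segment, and no induction on $p-q$ is needed.

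The heart of the matter, and the only place where both hypotheses are used in an essential way, is the claim $(I:x_0)=I$: the Borel property (via Proposition \ref{reeves}(i)) together with saturatedness pins down $x_0$ as the specific non-zero-divisor that keeps the \lq\lq missing\rq\rq\ term $\tau$ outside $I$ after multiplication, while the term-order axioms guarantee that multiplying by $x_0^{p-q}$ transports the failure of the segment condition from degree $q$ up to degree $p$. I expect this identification of $x_0$ as the right multiplier to be the main point; once it is in place, the order-compatibility of $\preceq$ makes the transport automatic, and choosing the smallest variable $x_0$ is exactly what prevents $\tau$ from entering $I$.
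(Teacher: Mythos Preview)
Your proof is correct and follows essentially the same line as the paper's: multiply by $x_0^{p-q}$, use that the term order is compatible with multiplication, and invoke $(I:x_0)=I$ (which the paper records simply as \lq\lq recalling that $I$ is saturated\rq\rq). The only cosmetic difference is that you argue by contraposition and spell out why the minimal generators of a saturated Borel ideal are free of $x_0$, whereas the paper gives the direct version in two sentences.
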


\begin{proof}
Let $x^{\alpha}$ be a term of $I_q$ and $x^{\beta}$ a term of $\mathbb T_q$ such that 
$x^{\alpha} \preceq x^{\beta}$, hence $x_0^{p-q} x^{\alpha} \preceq x_0^{p-q}x^{\beta}$ and, since $I_p$ is a 
segment, $x_0^{p-q}x^{\beta}$ belongs to $I_p$. Recalling that $I$ is saturated, $x^{\beta}$ belongs to $I_q$ 
and we are done.
\end{proof}

\begin{remark} \label{rem:segment} \rm 
A segment is a Borel set and a segment ideal is a Borel ideal. Indeed, by recovering the arguments of Remark 
\ref{term orders}(2), $x_i x^{\alpha} \prec x_h x^{\alpha}$ if $i<h$, thus $x^{\alpha} <_B x^{\beta}$ implies 
that $x^{\alpha} \prec x^{\beta}$, for any term order $\preceq$. In particular, if $\preceq$ is the lex order 
and $I$ is a monomial ideal generated in degree $\leq q$ such that $I_q$ is a segment w.r.t. $\preceq$, then 
$I_p$ is a segment too, for every $p>q$.
\end{remark}

\begin{remark} \label{rem:segmentcic} \rm
(1) To each admissible polynomial $p(z)$ of degree $0\leq\ell\leq n$ corresponds a unique saturated segment ideal 
$L(p(z))$ w.r.t. lex order (see \cite{B,M}). In particular for a constant polynomial $p(z)=d$ we have the 
following, where $\mathbb T(2):=\mathbb T \cap K[x_0,x_1]$, 
\[
\begin{split}
 L(d)&=(x_n,x_{n-1},\ldots,x_2,x_1^d),\\
 \cn(L(d))_j &=\begin{cases} 
\mathbb T (2)_j  & \text{\ if\ }    0\leq j<d \cr \{x_0^{d+i},x_0^{d+i-1}x_1,\ldots,x_0^{i+1}x_1^{d-1}\} & \text{\ if \ }  j=d+i,\ \forall\ i \geq 0, \end{cases}.
\end{split}
\]

(2) A segment ideal w.r.t. the revlex order exists if and only if the Hilbert polynomial is constant and the 
Hilbert function $H$ is non-increasing, i.e. $\Delta H(t)\leq 0$ for every $t>\alpha_H=\min\{t\in \mathbb N \vert H(t)<\binom{t+n}{n} \}$ 
\cite{D,MR99}.

(3) The same reasoning of \cite{D,MR99} shows that, more in general, a segment ideal $J$ w.r.t. a reverse term 
order exists if and only if the Hilbert polynomial is constant and the Hilbert function $H$ is non-increasing. 
Namely, if $\alpha_H$ is the initial degree and $x_1^{\alpha_H}\in \cn(J)$ it must be  $x_1^{\alpha_H+1}\in J$ 
otherwise, letting $\tau\succ x_1^{\alpha_H}$ be the smallest degree $\alpha_H$ term in $J$, it would be 
$x_1^{\alpha_H+1}\in \cn(J)$ with $x_1^{\alpha_H+1}\succ x_0\tau\in J$.
\end{remark}

\begin{proposition}\label{criterio} 
If an ideal $J\subset S$ of initial degree $\alpha_H$ has the property that there exist an integer 
$t\geq \alpha_H$ and four terms $x^{\alpha}, x^{\beta}\in \mathcal N(J)_t$, \, $x^{\gamma},x^\delta \in J_t$ with 
$x^{\alpha+\beta}=x^{\gamma+\delta}$, then $J$ is not a segment ideal w.r.t. any term order $\preceq$.
\end{proposition}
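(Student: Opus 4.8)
The plan is to argue by contradiction, exploiting the multiplicativity of a term order together with the hypothesis $x^{\alpha+\beta}=x^{\gamma+\delta}$. Suppose $J$ were a segment ideal w.r.t. some term order $\preceq$, so that $J_t$ is a segment: every $t$-degree term exceeding a term of $J_t$ lies in $J_t$. By hypothesis $x^\gamma,x^\delta\in J_t$ while $x^\alpha,x^\beta\in\mathcal N(J)_t=\mathbb T_t\setminus J_t$. Since $J_t$ is a segment and $x^\alpha\notin J_t$ while $x^\gamma\in J_t$, we must have $x^\alpha\prec x^\gamma$ (anything $\succeq x^\gamma$ would be forced into $J_t$); likewise $x^\beta\prec x^\delta$. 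The key observation is then that comparing these two strict inequalities under the product should collide with the equality $x^{\alpha+\beta}=x^{\gamma+\delta}$.

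First I would make precise the two inequalities $x^\alpha\prec x^\gamma$ and $x^\beta\prec x^\delta$ as above, using only the definition of segment in Definition \ref{segment-ideal}. Next I would invoke compatibility of $\preceq$ with multiplication: a term order satisfies $u\prec v\ \Rightarrow\ uw\prec vw$ for any term $w$. Applying this to $x^\alpha\prec x^\gamma$ (multiplying by $x^\beta$) and to $x^\beta\prec x^\delta$ (multiplying by $x^\gamma$) yields
\[
x^{\alpha}x^{\beta}\prec x^{\gamma}x^{\beta}\quad\text{and}\quad x^{\beta}x^{\gamma}\prec x^{\delta}x^{\gamma}.
\]
Chaining these gives $x^{\alpha+\beta}\prec x^{\gamma+\delta}$. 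But the hypothesis asserts $x^{\alpha+\beta}=x^{\gamma+\delta}$, so this strict inequality is absurd, completing the contradiction. Hence no term order can make $J$ a segment ideal.

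The only subtlety, and the step I expect to scrutinize most, is justifying the strict direction of $x^\alpha\prec x^\gamma$ rather than merely $x^\alpha\neq x^\gamma$. This follows because a segment is \emph{downward closed} from the top: if we had $x^\gamma\preceq x^\alpha$, then since $x^\gamma\in J_t$ and $J_t$ is a segment, every term $\succeq x^\gamma$ — in particular $x^\alpha$ — would belong to $J_t$, contradicting $x^\alpha\in\mathcal N(J)_t$. The same argument gives $x^\beta\prec x^\delta$. I would remark that the initial-degree hypothesis $t\geq\alpha_H$ guarantees that $\mathcal N(J)_t$ is genuinely nonempty (so that the configuration $x^\alpha,x^\beta\in\mathcal N(J)_t$ can occur), but the logical core of the proof needs only the multiplicativity of $\preceq$ and the segment property in a single degree $t$. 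No appeal to the Borel condition, saturation, or the Gotzmann number is required; the statement is purely order-theoretic.
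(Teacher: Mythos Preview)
Your proof is correct and follows essentially the same route as the paper: deduce $x^{\alpha}\prec x^{\gamma}$ and $x^{\beta}\prec x^{\delta}$ from the segment property, then chain via multiplicativity to $x^{\alpha+\beta}\prec x^{\gamma+\delta}$, contradicting the hypothesis (the paper chains through $x^{\alpha+\delta}$ rather than your $x^{\beta+\gamma}$, which is an immaterial difference). One small slip in your side remark: the condition $t\geq\alpha_H$ ensures $J_t\neq 0$ (so that $x^{\gamma},x^{\delta}$ can exist), not that $\mathcal N(J)_t$ is nonempty.
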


\begin{proof}
If $J$ were a segment ideal w.r.t some $\preceq$, by the given assumptions we would have in particular both 
$\cn(J)_t\ni x^{\beta}\prec x^{\delta}\in J_t$ and $\cn(J)_t\ni x^{\alpha}\prec x^{\gamma}\in J_t$. From these
 it would follow $x^{\alpha+\beta}\prec x^{\alpha+\delta}\prec x^{\gamma+\delta}$ contradicting 
$x^{\alpha+\beta}=x^{\gamma+\delta}$.
\end{proof}

\begin{example}\label{ciccimargherita}\rm (1) The (saturated) Borel ideal 
$J=(x_2^3,x_1^3 x_2^2,x_1^5 x_2,x_1^6)\subset K[x_0,x_1,x_2]$ is not a segment ideal w.r.t. any term order as 
it satisfies the conditions of Proposition \ref{criterio}. Namely, its initial degree is $3$ and, for 
$t=6\geq3$, we have: $J_6 \ni x_0^3x_2^3, x_1^6$ and
 $x_0^2x_1^2x_2^2, x_0x_1^4x_2\in \mathcal N(J)_6$ with 
$x_0^3x_2^3\cdot x_1^6=x_0^2x_1^2x_2^2 \cdot x_0x_1^4x_2$.
 
(2) Here we show that Proposition \ref{criterio} cannot be inverted. The Borel ideal 
$J=(x_2^3,x_1 x_2^2,x_1^2 x_2,$ $x_0^2x_2^2,x_0^3x_1x_2,x_0^5x_2,x_1^7)\subset K[x_0,x_1,x_2]$ 
of \cite[Example 5.8]{CS} has the property that $J_3$ is a segment w.r.t. revlex order 
while $J_t$ is a segment w.r.t. lex order, for every $t\geq 4$ so that at each degree it 
does not satisfy the conditions of Proposition \ref{criterio}. Nevertheless $J$ is not a 
segment w.r.t. any term order $\preceq$, namely if it were, from $x_1^2 x_2\in J_3$ and 
$x_0 x_2^2\in\mathcal N(J)_3$, it would follow $x_0x_2\prec x_1^2$ contradicting 
$(x_0x_2)^2\in J_4, x_1^4\in\mathcal N(J)_4$. Note also that $J^{sat}=(x_2,x_1^7)$ is the 
saturated lex segment.
\end{example}

\begin{definition}\label{def:segments}\rm 
Let $I\subset S$ be a non null saturated Borel ideal and $\preceq$ a term order on $\mathbb T$. 
\begin{itemize}
\item[(a)]\cite{LR} $I$ is a {\em hilb-segment ideal} if $I_r$ is a segment, where $r$ is the Gotzmann number 
 of the Hilbert polynomial of $I$; 
\item[(b)] $I$ is a {\em reg-segment ideal} if $I_{\delta}$ is a segment, where $\delta$ is the regularity of $I$;
\item[(c)] $I$ is a {\em gen-segment ideal} if, for every integer $s$, $G(I)_s$ consists of the greatest terms among the 
$s$-degree terms not in $\langle I_{s-1}\rangle$. 
\end{itemize}
\end{definition}

\begin{remark}\label{rem:criterion}\rm
The criterion given by Proposition \ref{criterio} can be adapted also to hilb-segment ideals and to reg-segment 
ideals $I$, by simply verifying it at degree $r=$ Gotzmann number and at degree $\delta=reg(I)$, respectively. 
Computational evidence suggests that the condition of this criterion is also necessary for reg-segment and hilb-segment ideals.
\end{remark}

The following results about Gr\"obner strata motivate the definitions of reg-segment ideal and of hilb-segment 
ideal, respectively. 

\begin{proposition}\label{Gstrata} 
(i) Let $I\subset S$ be a Borel saturated ideal generated in degree $\leq r$. If $s$ is the maximum degree of 
terms in $G(I)$ in which $x_1$ appears, then $\mathcal St_h(I_{\geq m})\cong\mathcal St_h(I_{\geq s})$, 
for every $m\geq s$. In particular, if $x_1$ does not appear in any term of $G(I)$, then 
$\mathcal St_h(I_{\geq m})\cong\mathcal St_h(I_{\geq s})$ for every $m$ (Theorem B and, also, Corollary 4.8(ii),(iii) of \cite{LR}).

(ii) An isolated, irreducible component of $\hilbp$ that contains a smooth point corresponding to a hilb-segment
 ideal is rational (Theorem D and, also, Corollary 6.8 of \cite{LR}).
\end{proposition}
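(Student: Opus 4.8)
The plan is straightforward in spirit, because both assertions are precisely the content of the quoted results in \cite{LR}: at the level of this paper the proof reduces to invoking Theorem B together with Corollary 4.8(ii),(iii) for part (i), and Theorem D together with Corollary 6.8 for part (ii). What I would actually spell out is the mechanism that makes each of them work, using only the explicit description of Gr\"obner strata recalled just before Definition \ref{segment-ideal}, namely that $\mathcal St_h(J,\preceq)$ is cut out by the ideal $\mathcal A(J)$ generated by the $x$-coefficients of the reduced $S$-polynomials of the deformation family $\{F_{\alpha}\}_{x^{\alpha}\in G(J)}$.

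For part (i) I would compare the defining ideals $\mathcal A(I_{\geq s})$ and $\mathcal A(I_{\geq m})$ directly. First observe that, for $m\geq s$, the truncation $I_{\geq m}$ is generated in the single degree $m$, so $G(I_{\geq m})$ is the whole set of terms $I\cap\mathbb T_m$ and the family $\{F_{\alpha}\}$ attached to $\mathcal St_h(I_{\geq m})$ carries, a priori, far more parameters $c_{\alpha\beta}$ than the one attached to $\mathcal St_h(I_{\geq s})$. The content of the statement is that these extra parameters are redundant. I would make this precise through Remark \ref{espansione}: the sous-\'escalier in degree $t+1$ is obtained from that in degree $t$ by multiplication by $x_0,\ldots,x_{n-1}$, and the hypothesis that no minimal generator of degree $>s$ involves $x_1$ is exactly what forces the admissible deformation terms in degrees $>s$ to be produced, via these multiplications, from the deformations already recorded at degree $s$. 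The goal would then be to exhibit a graded isomorphism of the two coordinate rings fixing the surviving parameters and eliminating the redundant ones. The main obstacle is precisely this elimination step: one must check that the $S$-polynomial reductions performed in the higher degrees introduce no relation in $\mathcal A(I_{\geq m})$ beyond those already present in $\mathcal A(I_{\geq s})$, so that the extra variables are genuinely solved for rather than constrained in a new way.

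For part (ii) I would exploit the structural fact, recalled just after the construction of $\mathcal St_h(J,\preceq)$, that a homogeneous Gr\"obner stratum is homogeneous with respect to a non-standard positive grading, i.e. it is an affine cone whose vertex is the origin, the point corresponding to the monomial ideal itself. An affine cone of this type that is smooth at its vertex is an affine space, hence rational: the contracting $\mathbb{G}_m$-action lets one spread a smooth neighbourhood of the vertex over the whole stratum. The plan is therefore to identify, around the given smooth point $[I]$, an open neighbourhood of the isolated irreducible component $C$ in $\hilbp$ with the stratum $\mathcal St_h(I_{\geq r})$, $r$ the Gotzmann number; smoothness of $[I]$ then forces this stratum to be an affine space, and $C$, containing a dense open subset isomorphic to an open subset of affine space, is rational. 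The hard part is the local identification of $C$ near $[I]$ with the Gr\"obner stratum, and it is exactly here that the hilb-segment hypothesis enters: the segment property of $I_r$ w.r.t. the chosen reverse term order, combined with Gotzmann persistence, is what guarantees that $\mathcal St_h(I_{\geq r})$ is an affine open chart of $\hilbp$ covering the point $[I]$. This identification is the step carried out in Theorem D and Corollary 6.8 of \cite{LR}.
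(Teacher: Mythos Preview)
Your proposal is correct and matches the paper's approach exactly: the paper gives no independent proof of this proposition and simply records the citations to \cite{LR} inline in the statement, so your reduction to Theorem B/Corollary 4.8 and Theorem D/Corollary 6.8 is precisely what is intended. Your additional sketch of the underlying mechanisms goes beyond what the paper provides; the only small slip is the phrase ``reverse term order'' in part (ii), since the hilb-segment hypothesis in the statement is with respect to an arbitrary term order.
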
  

\begin{proposition}\label{prop:segments}
Let $I\subset S$ be a saturated Borel ideal and $\preceq$ a term order on $\mathbb T$. Then
\begin{itemize}
\item[(i)] $I$ segment ideal $\Rightarrow$ $I$ hilb-segment ideal $\Rightarrow$ $I$ reg-segment ideal 
           $\Rightarrow$ $I$ gen-segment ideal. 
\item[(ii)] $\preceq$ is the lex order $\Leftrightarrow$ the implications in (i) are all equivalences, 
            for every ideal $I$.
\item[(iii)] If the projective scheme defined by $I$ is $0$-dimensional, then: $I$ segment ideal 
             $\Leftrightarrow$ $I$ hilb-segment ideal $\Leftrightarrow$ $I$ reg-segment ideal.
\end{itemize}
\end{proposition}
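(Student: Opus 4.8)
The plan is to view the four notions as a chain and to prove all three parts from a small number of propagation facts: Lemma~\ref{gradi} (a segment at a high degree forces segments in all lower degrees), the inequality $\delta:=\text{reg}(I)\le r$ between the regularity and the Gotzmann number $r$, and the lex-persistence recorded in Remark~\ref{rem:segment}. Part (i) is the forward chain. That a segment ideal is a hilb-segment ideal is immediate, since then $I_r$ is a segment. For hilb $\Rightarrow$ reg, the Gotzmann number bounds the Castelnuovo--Mumford regularity, so $\delta\le r$, and Lemma~\ref{gradi} turns the segment $I_r$ into a segment $I_\delta$. For reg $\Rightarrow$ gen, Proposition~\ref{reeves}(iii) gives that $\delta$ is the top degree of a minimal generator, so by Lemma~\ref{gradi} every $I_s$ with $s\le\delta$ is a segment; the $s$-degree terms outside $\langle I_{s-1}\rangle$ are $G(I)_s\sqcup\mathcal{N}(I)_s$, and since $I_s$ is a segment containing $G(I)_s$ each term of $G(I)_s$ exceeds each term of $\mathcal{N}(I)_s$, while for $s>\delta$ one has $G(I)_s=\emptyset$ and the requirement is vacuous.

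For part (iii) it suffices to prove reg $\Rightarrow$ segment when $\dim S/I=1$. Then $\sqrt{I}=(x_1,\dots,x_n)$, so $x_0$ is a non-zero-divisor on $S/I$ and, the Hilbert polynomial being the constant $d$, multiplication by $x_0$ gives $\mathcal{N}(I)_{t+1}=x_0\,\mathcal{N}(I)_t$ for every $t\ge\delta$ (both sets have $d$ elements). I would then induct on $t\ge\delta$ on the statement ``$\mathcal{N}(I)_t$ is the set of the $d$ smallest terms of $\mathbb{T}_t$'', the base $t=\delta$ being the reg-segment hypothesis and the degrees below $\delta$ being covered by Lemma~\ref{gradi}. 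For the step, since $x_1^m\in I$ for some $m\le\delta$ we have $x_1^\delta\in I$, hence $x_1^\delta\notin\mathcal{N}(I)_\delta$, so the $d$ terms of $\mathcal{N}(I)_\delta$ all lie below $x_1^\delta$; multiplying by $x_0^{t+1-\delta}$ shows the $d$ terms of $\mathcal{N}(I)_{t+1}$ lie below $x_1^{t+1}$, the smallest $x_0$-free term of $\mathbb{T}_{t+1}$. Thus the $d$ smallest terms of $\mathbb{T}_{t+1}$ are all divisible by $x_0$, so they coincide with $x_0$ times the $d$ smallest terms of $\mathbb{T}_t$, i.e. with $\mathcal{N}(I)_{t+1}$, and $I_{t+1}$ is a segment.

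For the forward direction of part (ii) I would invoke the lex-persistence of Remark~\ref{rem:segment}: if $I$ is a reg-segment then $I_\delta$ is a lex-segment and $I$ is generated in degrees $\le\delta$, so $I_p$ is a lex-segment for all $p\ge\delta$; together with Lemma~\ref{gradi} this makes $I$ a segment ideal and collapses segment, hilb-segment and reg-segment. A short induction then upgrades gen $\Rightarrow$ segment as well: if $I_{s-1}$ is a segment then for lex the expansion $\langle I_{s-1}\rangle_s$ is again a segment, so writing $I_s=\langle I_{s-1}\rangle_s\sqcup G(I)_s$ with $G(I)_s$ the greatest terms outside $\langle I_{s-1}\rangle$ exhibits $I_s$ as a segment. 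Hence for lex all four notions coincide for every $I$.

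The converse of (ii) is the step I expect to be the hardest. The key is the characterization: a graded term order $\preceq$ satisfies \emph{upward persistence} --- $S_1\cdot B$ is a segment whenever $B\subseteq\mathbb{T}_t$ is a segment --- if and only if $\preceq$ is lex. I would prove this by induction on the degree, using that persistence is equivalent to the biconditional $\rho\succeq x_0\tau\Leftrightarrow \rho/x_{\min(\rho)}\succeq\tau$ and that, once $\preceq$ agrees with lex in degree $t-1$, this biconditional forces agreement in degree $t$ (comparing $\rho\succ_{\mathrm{lex}}\rho'$ through the intermediate term $x_0(\rho/x_{\min(\rho)})$ and using the monotonicity $\rho\succ_{\mathrm{lex}}\rho'\Rightarrow \rho/x_{\min(\rho)}\succeq_{\mathrm{lex}}\rho'/x_{\min(\rho')}$). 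Granting this, if $\preceq\neq$ lex then persistence fails for some segment $B$, and from such a $B$ I want to manufacture a saturated Borel ideal that is a reg-segment but not a segment, which breaks the equivalences. The main obstacle is precisely this last manufacture: the cleanest case is when the failing $B$ can be taken among the $x_0$-free terms, for then the Borel ideal $(B)$ is saturated (Proposition~\ref{reeves}(i)), has regularity $t$ with $(B)_t=B$ a segment, yet $(B)_{t+1}=S_1B$ is not; controlling the saturation when no such $x_0$-free witness is available, or showing one always exists, is the delicate point to be settled.
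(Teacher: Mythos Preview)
Your arguments for (i), for the forward direction of (ii), and for (iii) are correct; for (iii) you take a somewhat different route than the paper (describing $\mathcal N(I)_{t}$ explicitly rather than chasing a single term), but it works once you note that $x_1^{t+1}$ is the $\preceq$-smallest $x_0$-free term for \emph{any} term order (since every $x_0$-free term of degree $t+1$ is $\geq_B x_1^{t+1}$, cf.\ Remark~\ref{cipa}) and that $|\mathcal N(I)_t|=d$ for $t\ge\delta$ (because $(x_1,\dots,x_n)^\delta\subseteq I$).

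The genuine gap is in the converse of (ii). You aim to produce, from a failure of ``upward persistence'', a \emph{saturated} Borel ideal that is reg-segment but not segment, and you correctly identify that controlling saturation of the ideal $(B)$ generated by your witness segment $B$ is the obstruction; you leave this unresolved. The paper bypasses this difficulty entirely, and in a much more elementary way. To break the chain of equivalences it suffices to exhibit a \emph{gen}-segment ideal that is not a segment ideal, and for this one can work in degree~$1$. If $\preceq$ is not lex, take the minimal degree $s$ where the two orders disagree; a short reduction (dividing by the common maximal variable and using minimality of $s$) shows there exist $x^\alpha,x^\beta\in\mathbb T_s$ with $x^\alpha\succ x^\beta$ but $\max(x^\alpha)=l<h=\max(x^\beta)$. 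Then the saturated Borel ideal $I=(x_h,\dots,x_n)$ is a gen-segment ideal for \emph{every} term order (its minimal generators all live in degree~$1$ and are precisely the largest variables; the gen-segment condition is vacuous in degrees $>1$), yet it is not a segment ideal for $\preceq$, since $x^\beta\in I_s$, $x^\alpha\notin I_s$, and $x^\alpha\succ x^\beta$. This single example breaks gen $\Rightarrow$ segment, hence the whole chain, with no need for your persistence characterization or any saturation analysis.
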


\begin{proof}
(i) The first implication is obvious. For the second one, it is enough to apply Lemma \ref{gradi} since 
$r\geq \delta$. For the third implication, recall that $I$ is generated in degrees $\leq \delta$, by definition. 
Moreover, if $I$ is a reg-segment ideal, by Lemma \ref{gradi} $I_t$ contains the greatest terms of degree $t$, 
for every $t\leq \delta$. Thus, in particular, minimal generators of $I$ must to be the greatest possible. 

(ii) First, suppose that $\preceq$ is the lex order. Then, by (i), it is enough to show that a gen-segment ideal 
is also a segment ideal. Indeed, by induction on the degree $s$ of terms and with $s=0$ as base of induction, 
for $s>0$ suppose that $I_{s-1}$ is a segment. Thus, by Remark \ref{rem:segment}, we know that $\langle I_{s-1}\rangle_s$ 
is a segment and, since possible minimal generators are always the greatest possible, we are done.

Vice versa, if $\preceq$ is not the lex order, let $s$ be the minimum degree at which the terms are ordered in 
a different way from the lex one. Thus, there exist two terms $x^{\alpha}$ and $x^{\beta}$ with maximum 
variables $x_l$ and $x_h$, respectively, such that $x^{\beta}\prec x^{\alpha}$ but $x_h \succ x_l$. The ideal 
$I=(x_h,\ldots,x_n)$ is a gen-segment ideal but not a segment ideal, since $x^{\beta}$ belongs to $I$ and 
$x^{\alpha}$ does not.

(iii) It is enough to show that, in the $0$-dimensional case, a reg-segment ideal $I$ is also a segment ideal. 
By induction on the degree $s$, if $s\leq \delta$, then the thesis follows by the hypothesis and by Lemma 
\ref{gradi}. Suppose that $s>\delta$ and that $I_{s-1}$ is a segment. At degree $s$ there are not minimal 
generators for $I$ so that a term of $I_s$ is always of type $x^{\alpha}x_h$ with $x^{\alpha}$ in $I_{s-1}$. 
Let $x^{\beta}$ be a term of degree $s$ such that $x^{\beta}\succ x^{\alpha}x_h$, thus $x^{\beta}\succ x^{\alpha}x_0$. 
By Proposition \ref{HS}, we have that $(x_1,\ldots,x_n)^s \subseteq I$. So, if $x^\beta$ is not divided by $x_0$, 
then $x^\beta$ belongs to $I_s$, otherwise there exists a term $x^\gamma$ such that $x^\beta=x^\gamma x_0$. 
Thus $x^\gamma \succ x^\alpha$ and by induction $x^\gamma$ belongs to $I_{s-1}$ so that $x^\beta=x^\gamma x_0$ 
belongs to $I_s$.
\end{proof}

\begin{example}\rm 
Let $\preceq$ be the revlex order. 

(1) The ideal $I=(x_2^2,x_1x_2)\subset K[x_0,x_1,x_2]$ is a hilb-segment ideal, but it is not a segment ideal. 
In this case, the Hilbert polynomial is $p(z)=z+2$ with Gotzmann number $2$ and $reg(I)=2$. We have 
$x_1^3\in\mathcal N(I)$ and $x_0x_1x_2\in I$ with $x_1^3\succeq x_0x_1x_2$.

(2) $I'=(x_2^3,x_1 x_2^2,x_1^2 x_2)\subset K[x_0,x_1,x_2]$ is a reg-segment ideal, but not a hilb-segment ideal. 
In this case, the Hilbert polynomial is $p(z)=z+4$ with Gotzmann number $4$ and $reg(I')=3$. We get 
$x_0 x_2^3\in I'$ with $x_0x_2^3\preceq x_1^4\notin I'$.

(3) $I''=(x_4^2,x_3 x_4,x_3^3)\subset K[x_0,\ldots,x_4]$ is a gen-segment ideal but not a reg-segment ideal. 
In this case, the Hilbert polynomial is $p(z)=2z^2+2z+1$ with Gotzmann number $12$ and $reg(I'')=3$. We get 
$x_0x_4^2\in I''$ with  $x_0x_4^2\preceq x_2^3\notin I''$. 
\end{example}

\begin{remark}\rm
If $I$ is a saturated Borel ideal and also an almost revlex segment ideal, as defined in \cite{D}, then it is 
a gen-segment ideal w.r.t. revlex order. 
\end{remark}

\begin{theorem}\label{grado0} 
To the ideal $J$ generated by a Borel set $B\subset\mathbb T_d$, consisting of all but $d$ terms of degree $d$, 
corresponds a projective scheme with Hilbert polynomial $p(z)=d$.
\end{theorem}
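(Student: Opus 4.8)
The plan is to pass to the complement $N := \mathbb{T}_d \setminus B$. By hypothesis $|N| = d$, and since $B$ is a Borel set, $N = \mathcal{N}(J)_d$ is closed under the elementary moves $\down_j$, i.e. it is an order ideal for the partial order $<_B$. Because the Hilbert function counts the sous-\'escalier, $H_{S/J}(t) = |\mathcal{N}(J)_t|$ with $\mathcal{N}(J)_d = N$, the whole statement reduces to showing that $|\mathcal{N}(J)_t| = d$ for every $t \geq d$; this forces $p_{S/J}(z)$ to be the constant $d$.

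The crucial step, which is exactly where the value $|N| = d$ enters, is to prove that every term of $N$ is divisible by $x_0$. I would argue by contradiction: suppose some $x^{\alpha} \in N$ has $\min(x^{\alpha}) \geq 1$. Pushing all of its exponents down onto the variable $x_1$ by repeated moves $\down_j$ with $j \geq 2$ (none of which introduces $x_0$, so $\min \geq 1$ is preserved) shows $x_1^d \leq_B x^{\alpha}$, hence $x_1^d \in N$ by closure. But then the entire chain
\[
x_1^d >_B x_0 x_1^{d-1} >_B \cdots >_B x_0^d,
\]
obtained by applying $\down_1$ successively, consists of $d+1$ distinct terms all lying below $x_1^d$, and hence all in $N$; this contradicts $|N| = d$. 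Therefore $N \subseteq x_0\,\mathbb{T}_{d-1}$, that is, every element of $N$ has $\min = 0$.

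With this in hand I would invoke the expansion formula of Remark \ref{espansione}. Since $J$ is generated in degree $d$, we have $G(J)_{t+1} = \emptyset$ for every $t \geq d$, so the inclusion in that remark is an equality:
\[
\mathcal{N}(J)_{t+1} = \bigsqcup_{i=0}^{n-1} x_i\,\{x^{\alpha} \in \mathcal{N}(J)_t \, : \, \min(x^{\alpha}) \geq i\}.
\]
Feeding in $\mathcal{N}(J)_d = N$, whose terms all have $\min = 0$, only the summand $i=0$ survives and yields $\mathcal{N}(J)_{d+1} = x_0 N$; as multiplication by $x_0$ is injective and again produces terms with $\min = 0$, an immediate induction gives $\mathcal{N}(J)_t = x_0^{\,t-d} N$ for all $t \geq d$. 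Hence $|\mathcal{N}(J)_t| = |N| = d$ for all $t \geq d$, and $p_{S/J}(z) = d$ as claimed. (Equivalently, the containment $N \subseteq x_0\,\mathbb{T}_{d-1}$ says $(x_1,\dots,x_n)^d \subseteq J$, so the scheme is supported at the single point $[1:0:\cdots:0]$ and $p_{S/J}$ is a constant; the value is then pinned down by the count above.)

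The main obstacle is the combinatorial step of the second paragraph: everything after it is a routine induction on the degree resting on the already-established expansion formula. The delicate point there is that the bound $|N| = d$ is exactly tight against the length $d+1$ of the chain running from $x_1^d$ down to $x_0^d$, and it is precisely this tightness that excludes any term of $N$ with $\min \geq 1$.
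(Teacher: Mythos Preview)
Your proof is correct and follows essentially the same route as the paper: both first use the Borel chain $x_1^d >_B x_0 x_1^{d-1} >_B \cdots >_B x_0^d$ of length $d+1$ to force $x_1^d \in J$ (equivalently, $N \subseteq x_0\,\mathbb{T}_{d-1}$), and then invoke the expansion formula of Remark~\ref{espansione} to propagate $|\mathcal{N}(J)_t| = d$ for all $t \geq d$. Your write-up is simply more explicit about the induction and about why only the $i=0$ summand survives.
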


\begin{proof}
By the Borel condition, we have that $x_1^d$ belongs to $J$, otherwise 
$\vert \mathcal N(J)_{d}\vert\geq d+1>d$ by Remark \ref{espansione}(1), so that by Remark \ref{espansione}(2)
we have $\vert \mathcal N(J)_{t}\vert=d$, for every $t\geq d$. The ideal $I=J^{sat}$ is the saturated ideal of 
a projective subscheme with Hilbert polynomial $p(z)=d$.
\end{proof}

\begin{remark}\rm \label{numero generatori}
(1) For every positive integer $d$ and any term order $\preceq$ on $\mathbb T$, there exists a unique saturated 
segment ideal $I\subset S$, with Hilbert polynomial $p(z)=d$. This is a straightforward consequence of Theorem 
\ref{grado0}: it is enough to take the ideal $J$ generated by all but the least $d$ terms of degree $d$.

(2) In \cite{MR99} for the revlex order and then in \cite{CS} for each term order, it is shown that the generic 
initial ideal of a set $X$ of $d$ general points in $\mathbb P^n$ is a segment ideal with Hilbert polynomial 
$p(z)=d$. As the Hilbert function of $X$ is the maximum possible, that is $H_X(t)=\min\{\binom{t+n}{t},d\}$, 
we deduce that this is the Hilbert function of the saturated segment ideal of (1). 

(3) For the revers term orders it is possible to give a direct and constructive proof of (1). 
If $J\subset S$ is a segment ideal w.r.t. a revers order with Hilbert polynomial $p(z)=d$, its Hilbert function 
must to be non-increasing by Remark \ref{rem:segmentcic}(2) and strictly increasing until it reaches the value 
$d$, after which it is always equal to $d$, because $J$ is a saturated ideal of Krull dimension $1$. 
Thus, $H_{S/J}(t)$ must to be the maximum possible and we have two items:
\begin{itemize}
\item[(i)] $\alpha_H=\rho_H+1$, so that $J=(x_1,\ldots,x_n)^{\alpha_H}$;
\item[(ii)] $\alpha_H=\rho_H$, so that $J$ is generated only in degrees $\alpha_H$ and $\alpha_H+1$; 
more precisely, the minimal generators of degree $\alpha_H$ are the greatest $\binom{\alpha_H+n}{\alpha_H}-d$ 
terms of $\mathbb T_{\alpha_H}$ (so that in $\mathcal N(J)_{\alpha_H}$ there are $d-\binom{\alpha_H+n-1}{n-1}$ 
terms $x^{\beta}$ with $\min(x^{\beta})\geq 1$) and the minimal generators of degree $\alpha_H+1$ are the all 
terms $\tau\succeq x_1^{\alpha_H+1}$ which are not multiples of terms in $J_{\alpha_H}$ (these terms are at 
least $d-\binom{\alpha_H-1+n}{\alpha_H-1}$, by Remark \ref{espansione}).
\end{itemize} 
%In particular, let $\Lambda(d)$ be the saturated segment ideal w.r.t. revlex order. Thus, we get $\mathcal N(\Lambda(d))=\Lambda_{d,\alpha_H}$ and, for every $j\geq \alpha_H$, $(\mathcal N(\Lambda(d)))_j=x_0^{j-\alpha_H} \Lambda_{d,\alpha_H}$. 
It follows that in case (i) we have $\vert G(J)\vert=\binom{\rho_H+n}{n-1}$ and in case (ii) 
$\vert G(J)\vert\geq \binom{\rho_H+n}{n}-d+d-\binom{\rho_H+n-1}{n}=\binom{\rho_H+n-1}{n-1}$.
\end{remark}

\subsection{On hilb-segment ideals}

Let $\preceq$ be any term order and $p(z)$ an admissible polynomial with Gotzmann number $r$. We want to see 
under which conditions there exists a hilb-segment ideal for $p(z)$. In this context, it is immediate to see 
that, if $r=1$, then $p(z)=\binom{z+\ell}{\ell}$, where $\ell< n$ is the degree of $p(z)$, so that 
$I=(x_{\ell+1},\ldots,x_n)$ is the hilb-segment ideal for $p(z)$. Moreover, we have already observed that a 
hilb-segment ideal exists always for a constant polynomial $p(z)=d$.

\begin{example}\label{exNotSegment}\rm The following saturated Borel ideals are not hilb-segment ideals for any 
term order: 

1) $J=(x_2^2,x_1^3x_2,x_1^4)\subset K[x_0,x_1,x_2]$, (see \cite{CS}) as $H=(1,3,5,7,\ldots,p(z)=7,\ldots)$ 
we have $r=7$ so, if $J$ were a hilb-segment ideal w.r.t. some $\preceq$, at degree $7$ we should have 
$\cn(J)_7\ni x_0^4x_1^2x_2\prec x_0^5x_2^2\in J_7$ and $\cn(J)_7\ni x_0^4x_1^2x_2\prec x_0^3x_1^4\in J_7$ 
contradicting $(x_0^4x_1^2x_2)^2=x_0^5 x_2^2 \cdot x_0^3 x_1^4$.

2) $J=(x_2^3,x_1x_2^2,x_1^2x_2)\subset K[x_0,x_1,x_2],$ as $H=(1,3,6,7,\ldots,p(z)=z+4,\ldots)$ we have $reg(J)=3$ 
and $r=4,$ so we can repeat the same reasoning of 1) with $x_0x_1^2x_2\in J_4$ and $x_1^4$, $x_0^2x_2^2\in 
\mathcal N(J)_4$.

%3) $J=(x_2^2,x_1^3x_2,x_1^4)\subset K[x_0,x_1,x_2],$ (see \cite{CS}) as $H=(1,3,6,7,\ldots,p(z)=7,\ldots)$ we have  $r=7$ and %$x_0^6x_1,x_0^4x_1^3\in\cn(J)_7$ with $x_0^6x_1\cdot x_0^4x_1^3=(x_0^5x_1^2)^2, x_1^5x_2\in J_7$ (note that as $J$ gives a $0$-%dimensional projective scheme, by Proposition \ref{prop:segments}, this shows also that $J$ is not a segment ideal.
\end{example}

\begin{proposition} 
In $K[x_0,x_1,x_2]$ every saturated Borel ideal with Hilbert polynomial $p(z)=d \leqslant 6$ is a hilb-segment 
ideal. Whereas for every $p(z) = d \geqslant 7$, a saturated Borel ideal, which is not a hilb-segment
for any term order, always exists. 
\end{proposition}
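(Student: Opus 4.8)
The plan is to cut the problem down to a finite, explicitly enumerable list of ideals and then attack the two ranges of $d$ with opposite tools: the four-term obstruction of Proposition~\ref{criterio} for the failures, and the construction of separating term orders for the positive part. Since $p(z)=d$ is constant, its Gotzmann number is $r=d$ (one writes $d=\binom{z}{0}+\cdots+\binom{z-(d-1)}{0}$), so the hilb-segment condition is tested exactly in degree $d$. By Proposition~\ref{reeves}(i) a saturated Borel ideal $J\subset K[x_0,x_1,x_2]$ with $p_{S/J}=d$ has all minimal generators in $K[x_1,x_2]$, and the Borel property forces the row-widths $\lambda_c:=\#\{b : x_1^bx_2^c\in\cn(J)\}$ to be \emph{strictly} decreasing with $\sum_c\lambda_c=d$; hence such ideals are in bijection with the partitions of $d$ into distinct parts. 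For $t$ at least the top generator degree, $\cn(J)_t$ is the lift $x_0^{t-b-c}x_1^bx_2^c$ of the sous-escalier in the $(x_1,x_2)$-plane, so the test of Proposition~\ref{criterio} read in that plane becomes: there are standard points $\alpha,\beta$ and non-standard points $\gamma,\delta$ of $(x_1,x_2)$-degree $\le d$ with $\alpha+\beta=\gamma+\delta$.

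For $d\ge 7$ I would exhibit such a collision. The case $d=7$ is already the ideal $J=(x_2^2,x_1^3x_2,x_1^4)$ of Example~\ref{exNotSegment} (the partition $\{4,3\}$), where the doubled standard point $2\cdot(2,1)=(4,0)+(0,2)$ meets the two non-standard points $(4,0),(0,2)$. For every $d\ge 8$ I would take the partition $\{d-3,2,1\}$, that is $J=(x_2^3,x_1x_2^2,x_1^2x_2,x_1^{\,d-3})$, and use the same identity read in the opposite direction: now $(4,0)$ and $(0,2)$ are standard while $(2,1)$ is a (doubled) generator. Lifting to the Gotzmann degree $d$ gives $x_0^{d-4}x_1^4,\ x_0^{d-2}x_2^2\in\cn(J)_d$ and $x_0^{d-3}x_1^2x_2\in J_d$ with $(x_0^{d-4}x_1^4)(x_0^{d-2}x_2^2)=(x_0^{d-3}x_1^2x_2)^2$, so the hilb-segment form of Proposition~\ref{criterio} (Remark~\ref{rem:criterion}) applies at degree $d$ and $J$ is a hilb-segment for no term order. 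One checks that $\lambda_0=d-3\ge 5>2$ makes $(4,0)$ standard and the three parts distinct, and that all exponents are non-negative precisely because $d\ge 8$.

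For $d\le 6$ there are only $1,1,2,2,3,4$ such ideals, and I would show each is a hilb-segment. The single-part ideals $(x_2,x_1^{\,d})$ are lex-segments, and the partition realizing the maximal Hilbert function (e.g. $\{3,2,1\}$ for $d=6$) is the revlex-segment; both are hilb-segments by definition. For the remaining intermediate partitions it suffices, by Proposition~\ref{prop:segments}(iii), to make $I_d$ a segment for one term order, and for this I would produce a positive weight vector $w\in\mathbb{R}^3$ strictly separating $J_d$ above $\cn(J)_d$ and refine it by lex as in Remark~\ref{term orders}(4); then $I_d$ is a segment for $\preceq_w$. Small integer weights already do the job: for instance $w=(1,3,4)$ separates $\{3,1\}$ and $\{3,2\}$, and $w=(1,2,3)$ separates $\{4,2\}$, the remaining $d=6$ partitions being handled the same way. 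In each of these finitely many staircases one verifies directly that the planar collision $\alpha+\beta=\gamma+\delta$ has no admissible solution, so no obstruction is present.

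The step I expect to be the main obstacle is this positive direction, because Proposition~\ref{criterio} provides only a \emph{sufficient} condition for failing to be a segment, and its converse is, as Remark~\ref{rem:criterion} records, supported only by computational evidence; thus the mere absence of a four-term collision does not formally yield "hilb-segment". I would close the gap through the separation itself: the existence of a strictly separating weight is a linear-feasibility problem, infeasible exactly when the origin lies in the convex hull of $\{\gamma-\beta : x^\gamma\in J_d,\ x^\beta\in\cn(J)_d\}$, i.e. exactly when some (possibly long) multiplicative relation $\prod_i x^{\gamma_i}=\prod_j x^{\beta_j}$ holds between terms of $J_d$ and terms of $\cn(J)_d$. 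So the genuine content of the $d\le 6$ cases is to certify that \emph{no} such relation of any length exists; for these few small staircases this can be checked by hand, and producing the explicit weights above performs that certification and simultaneously supplies the required term orders.
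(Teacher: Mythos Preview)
Your proposal is correct and follows essentially the same strategy as the paper: enumerate the saturated Borel ideals in $K[x_0,x_1,x_2]$ with constant Hilbert polynomial $d$ (the paper invokes the classification of Borel sets in three variables, you phrase it as partitions of $d$ into distinct parts---these are the same thing), then exhibit a separating term order for each when $d\le 6$ and a multiplicative obstruction \`a la Proposition~\ref{criterio} when $d\ge 7$.

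Two small points. First, your family $J=(x_2^3,x_1x_2^2,x_1^2x_2,x_1^{\,d-3})$ for $d\ge 8$ is exactly the paper's ``even'' family $d=2a$, $a\ge 4$, but you observe (correctly) that the same relation $(x_0^{d-3}x_1^2x_2)^2=x_0^{d-2}x_2^2\cdot x_0^{d-4}x_1^4$ works for odd $d\ge 9$ as well; the paper instead switches to a separate family $(x_2^2,x_1^ax_2,x_1^{a+1})$ for odd $d=2a+1$. Your single family is a bit cleaner. Second, in the positive direction your bookkeeping slips: for $d=4$ and $d=5$ the partitions $\{3,1\}$ and $\{3,2\}$ are precisely the revlex ones (they realize the maximal Hilbert function), so they do not need a weight at all; the genuinely intermediate cases requiring a weight are $\{4,1\}$ for $d=5$ and $\{5,1\},\{4,2\}$ for $d=6$. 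The paper handles these with the weighted orders $(4,2,1)$, $(5,2,1)$, $(3,2,1)$ respectively, which play the same role as your separating vectors. Your linear-separation/Farkas reading of why ``no collision'' upgrades to ``some weight exists'' is a nice gloss the paper does not make explicit.
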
  

\begin{proof}
We give a direct constructive proof of the result, based in part on the characterization of the Borel subsets in three variables of \cite{M01}.  
\begin{itemize}
\item[$d \leqslant 2$] there exists a unique saturated Borel ideal $(x_2,x_1^d)$, which is the hilb-segment ideal w.r.t. lex order;
\item[$d=3$] there are only two saturated Borel ideals: the hilb-segment ideals $(x_2,x_1^3)$ (w.r.t. lex) and $(x_2^2,x_1x_2,x_1^2)$ (w.r.t. revlex);
\item[$d=4$] there are only two saturated Borel ideals: the hilb-segment ideals $(x_2,x_1^4)$ (w.r.t. lex) and $(x_2^2,x_1x_2,x_1^3)$ (w.r.t. revlex);
\item[$d=5$] there are three saturated Borel ideals: the hilb-segment ideals $(x_2,x_1^5)$ (w.r.t lex), $(x_2^2,$ $x_1x_2,x_1^4)$ (w.r.t. $(4,2,1)$-term order) and $(x_2^2,x_1^2 x_2,x_1^3)$ (w.r.t revlex);
\item[$d=6$] there are four saturated Borel ideals: the hilb-segment ideals $(x_2,x_1^6)$ (w.r.t. lex),  $(x_2^2,x_1 x_2,x_1^5)$ (w.r.t. $(5,2,1)$-term order), $(x_2^2,x_1^2x_2,x_1^4)$ (w.r.t. $(3,2,1)$-term order) and $(x_2^3,x_1 x_2^2,$ $x_1^2 x_2,x_1^3)$ (w.r.t revlex);
\item[$d \geqslant 7$] 
(i) Let us firstly consider the case $d = 2a+1, a \geq 3$ and the ideal $J = (x_2^2,x_1^a x_2,$ $x_1^{a+1})$. 
It has Hilbert polynomial $p(z)=2a+1$, in fact in degree $2a+1$, the $2a+1$ monomials 
$\{x_0^{a+i} x_1^{a-i} x_2,x_0^{a+j+1} x_1^{a-j},\ i=1,\ldots,a,\ j=0,\ldots,a\}$, belong to the quotient. 
Moreover $x_0^{2a-1} x_2^2,x_0^a x_1^{a+1} \in J$ and $x_0^{2a-2} x_1^2 x_2, x_0^{a+1} x_1^{a-1} x_2 \notin J$, 
but $x_0^{2a-1} x_2^2 \cdot x_0^a x_1^{a+1} = x_0^{2a-2} x_1^2 x_2 \cdot x_0^{a+1} x_1^{a-1} x_2$ 
(if $a=3$ this is exactly the ideal of Example \ref{exNotSegment} 1)).

\noindent (ii) In the case $d=2a$, $a\geq4$, let us consider the ideal $J = (x_2^3,x_1 x_2^2, x_1^2 x_2, x_1^{2a-3})$. 
It has Hilbert polynomial $p(z)=2a$, %in fact in degree $2a$, the $2a$ monomials 
namely $\cn(J)_{2a}=\{x_0^{2a-2} x_2^2,x_0^{2a-2} x_1 x_2,x_0^{2a-1} x_2,$ $x_0^{2a-i} x_1^i,\ i=0,\ldots,2a-4\}$.
Moreover $x_0^{2a-3} x_1^2 x_2 \in J_{2a}$, $x_0^{2a-2} x_2^2,x_0^{2a-4} x_1^4 \in \cn(J)_{2a}$,
and $(x_0^{2a-3} x_1^2 x_2)^2 = x_0^{2a-2} x_2^2 \cdot x_0^{2a-4} x_1^4$. \qedhere
\end{itemize}
\end{proof}

\begin{proposition}\label{hilb>=1}
Let $\preceq$ be any reverse term order and $p(z)$ an admissible polynomial of positive degree with Gotzmann number $r$.
\begin{itemize}
\item[(1)] If $p(r)\leq \binom{r-1+n}{n}$, then there is not the hilb-segment ideal for $p(z)$.
\item[(2)] If $p(z)=dz+1-g$, there exists the hilb-segment ideal $J$ for $p(z)$ if and only if
\begin{itemize} 
\item[(i)] $r=d$ or $r=d+1$, when $n=2$;
\item[(ii)] $r=d=1$, when $n>2$.
\end{itemize} 
\end{itemize} 
\end{proposition}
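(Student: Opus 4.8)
The plan is to recast the hilb-segment condition, via the reverse decomposition $(\star\star\star)$ of Remark \ref{term orders}(2), into a counting statement about the degree-$r$ sous-\'escalier, and then to read off the constraints on $r$, $d$, $n$ from the explicit shape of a reverse segment.

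\smallskip

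\emph{Part (1).} For a reverse order the decomposition $(\star\star\star)$ shows that the $\binom{r-1+n}{n}$ terms divisible by $x_0$ are exactly the smallest terms of $\mathbb T_r$. If $I$ is the hilb-segment ideal then $\cn(I)_r$ is precisely the set of the $|\cn(I)_r|=H_{S/I}(r)=p(r)$ smallest terms of $\mathbb T_r$ (the last equality holding because $r\geq reg(I)\geq\rho_H$). When $p(r)\leq\binom{r-1+n}{n}$ this forces $\cn(I)_r\subseteq x_0\mathbb T_{r-1}$, so every term of $\cn(I)_r$ is divisible by $x_0$; in particular $x_1^r\in I_r$. Since $p(z)$ has positive degree, Proposition \ref{reeves}(ii) gives that $S/I$ has Krull dimension at least $2$, so no pure power of $x_1$ can lie in $I$, a contradiction.

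\smallskip

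\emph{Part (2), necessity.} Because $r\geq reg(I)$, Proposition \ref{reeves}(iii) shows that $I$ has no minimal generators in degree $r+1$, so Remark \ref{espansione} yields $|\cn(I)_{r+1}|=\sum_{i=0}^{n-1}\lambda_{i,r}(\cn(I))$. Comparing with $|\cn(I)_{r+1}|=p(r+1)=p(r)+d$ and $|\cn(I)_r|=\lambda_{0,r}(\cn(I))$ gives the key identity $\sum_{i=1}^{n-1}\lambda_{i,r}(\cn(I))=d$. For a reverse segment the terms of $\cn(I)_r$ with $\min\geq i$ are the largest terms of $\mathbb T_r$ lying in $K[x_i,\dots,x_n]$, hence
\[
\lambda_{i,r}(\cn(I))=\max\Bigl\{0,\ p(r)-\tbinom{r+n}{n}+\tbinom{r+n-i}{n-i}\Bigr\},\qquad \lambda_{1,r}(\cn(I))=\max\{0,q\},
\]
where $q:=p(r)-\binom{r-1+n}{n}$. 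Eliminating $g$ through $r=\binom{d}{2}+1-g$ of Example \ref{numeroGgrado1} gives $p(r)=(d+1)r-\binom{d}{2}$. For $n=2$ the identity reads simply $q=d$, which becomes $r^2-(2d+1)r+d(d+1)=0$, i.e. $(r-d)(r-d-1)=0$, so $r\in\{d,d+1\}$. For $n>2$ the same substitution in $\sum_{i=1}^{n-1}\lambda_{i,r}=d$, together with the admissibility bound $1\leq r\leq\binom{d}{2}+1$, leaves only $r=d=1$: here $\binom{r-1+n}{n}$ grows like a degree-$n$ polynomial in $r$ and dominates the linear quantity $(d+1)r-\binom{d}{2}-d$ at every admissible lattice point except $(r,d)=(1,1)$, which I would settle by an elementary monotonicity comparison of the two sides.

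\smallskip

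\emph{Part (2), sufficiency, and the main obstacle.} For the converse I would exhibit the candidate ideals and confirm their Hilbert polynomial by the same expansion. When $n>2$ and $r=d=1$ the ideal is $(x_2,\dots,x_n)$, the reverse segment of a line with $p(z)=z+1$. When $n=2$ I would take the saturated Borel ideal whose degree-$r$ sous-\'escalier is $x_0\mathbb T_{r-1}$ together with the $d$ smallest terms $x_1^r,x_1^{r-1}x_2,\dots,x_1^{r-d+1}x_2^{d-1}$ of $K[x_1,x_2]_r$; this is a reverse segment by construction, and iterating Remark \ref{espansione} one checks that $\lambda_{1,t}(\cn(I))=d$ persists for every $t\geq r$, so $\Delta H_{S/I}$ stabilizes at $d$ and $H_{S/I}(t)=dt+1-g$ with $g=\binom{d}{2}+1-r$ the genus attached to the prescribed $r$. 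I expect the hard part to be exactly this last verification for $n=2$: showing that the chosen degree-$r$ segment propagates under expansion to the correct linear Hilbert polynomial for both $r=d$ and $r=d+1$ (in particular that no spurious minimal generators arise and that the ideal stays saturated), together with the monotonicity estimate excluding all pairs $(r,d)\neq(1,1)$ when $n>2$.
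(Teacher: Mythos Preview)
Your argument for Part (1) and for Part (2) when $n=2$ is correct and essentially parallel to the paper's: both reduce to the single equation $p(r)=\binom{r-1+n}{n}+d$ and factor it as $(r-d)(r-d-1)=0$. The paper reaches that equation via the hyperplane section $(J,x_0)$, whose constant Hilbert polynomial $\Delta p=d$ forces exactly $d$ non-$x_0$ terms in $\cn(J)_r$; you reach it via the expansion of Remark \ref{espansione}, which for $n=2$ collapses to $\lambda_{1,r}=d$. Both routes are fine here.

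The genuine gap is in Part (2) for $n>2$. Your displayed formula
\[
\lambda_{i,r}(\cn(I))=\max\Bigl\{0,\ p(r)-\tbinom{r+n}{n}+\tbinom{r+n-i}{n-i}\Bigr\}
\]
uses the revlex stratification $(\star\star)$, in which the $p(r)$ smallest terms fill up the $\min=0$ layer, then the $\min=1$ layer, and so on. For a \emph{general} reverse order only the decomposition $(\star\star\star)$ is available: the $x_0$-divisible terms come first, but the ordering among terms with $\min\geq 1$ is arbitrary. So only the case $i=1$ of your formula is justified, and your single expansion identity $\sum_{i\geq 1}\lambda_{i,r}=d$ does \emph{not} by itself yield $q=d$; it is consistent with $q<d$ and some $\lambda_{i,r}>0$ for $i\geq 2$. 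Your growth comparison then rules out the wrong inequality.

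Two clean fixes. \textbf{(a)} Use the paper's hyperplane-section step: since $(J,x_0)/(x_0)$ has constant Hilbert polynomial $d$ with Gotzmann number $d\leq r$, one gets $\lambda_{1,r}(\cn(J))=|\cn((J,x_0))_r|=d$ directly, hence $q=d$ for every reverse order. \textbf{(b)} Stay with your expansion idea but iterate it once more: applying Remark \ref{espansione} at degree $r+2$ gives $\sum_{i\geq 1}\lambda_{i,r+1}=d$, and since $\lambda_{i,r+1}=\sum_{j\geq i}\lambda_{j,r}$ this reads $\sum_{j\geq 1} j\,\lambda_{j,r}=d$. Subtracting from $\sum_{j\geq 1}\lambda_{j,r}=d$ forces $\lambda_{j,r}=0$ for $j\geq 2$, hence $\lambda_{1,r}=q=d$, and now your equation and the monotonicity estimate go through for all reverse orders. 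On sufficiency, both the paper and your sketch are brief; your proposed construction is the right one and the verification is routine once $\lambda_{1,r}=d$, $\lambda_{j,r}=0\ (j\geq2)$ are in hand.
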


\begin{proof}
(1) By the hypothesis we have that $x_1^r$ belongs to the ideal, hence the Krull dimension must to be 1 by 
Proposition \ref{HS}, and we are done. 

(2) In this case, the hilb-segment ideal $J$ exists if and only if $p(r)=\binom{n+r-1}{n}+d$. 
Infact, the sous-\'escalier of $(J,x_0)_r$ contains the least $d$ terms not divided by $x_0$ and, since the term 
order is reverse and $r\geq d$, also the sous-\'escalier of $J_r$ must contain the same least $d$ terms not 
divided by $x_0$. Hence, by the Borel property, all the terms divided by $x_0$  must to belong to the 
sous-\'escalier of $J_r$, too. Thus, since $r=\binom{d}{2}+1-g$ by Example \ref{numeroGgrado1}, we get:
\[
dr+r-\binom{d}{2}=\binom{n+r-1}{n}+d \Leftrightarrow 
d=\frac{1}{2}\left( 2r-1\pm\sqrt{8 \binom{r+1}{2}-8\binom{r+n-1}{n}+1}\right)
\]
so that $J$ exists if and only if the argument $\Delta$ under the square root is not negative. 
By an easy calculation we obtain the thesis.
\end{proof}

\subsection{On gen-segment ideals for revlex order}

We describe some procedure to construct gen-segment ideals w.r.t revlex order with a given admissible 
polynomial $p(z)$. We have already observed that a hilb-segment ideal exists always and, thus, also a 
gen-segment ideal for a constant polynomial $p(z)=d$.

\begin{lemma}\label{j(n)}
If $p(z)=dz+1-g$ is an admissible polynomial with Gotzmann number $r$, there exist two integers 
$n\geq 2$ and $j(n)>0$ such that $\binom{j(n)-1+n}{n}\leq p(j(n)-1)$ and $p(j(n)+h)<\binom{j(n)+h+n}{n}$ 
for every $h\geq 0$.
\end{lemma}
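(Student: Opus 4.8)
The plan is to take $n=2$ and to let $j(2)$ be the threshold degree at which the quadratic dimension function $\binom{t+2}{2}=\dim_K S_t$ definitively overtakes the linear polynomial $p(t)$. First I would introduce
\[
A:=\Big\{t\in\mathbb N \ \big\vert\ \binom{t+2}{2}\leq p(t)\Big\}
\]
and set $j(2):=1+\max A$. Granting for the moment that $A$ is non-empty and bounded above, so that $t_0:=\max A$ is a well-defined non-negative integer, both required inequalities fall out of the definition of $t_0$: the inequality $\binom{j(2)-1+2}{2}\leq p(j(2)-1)$ is just the assertion $t_0\in A$, whereas $p(j(2)+h)<\binom{j(2)+h+2}{2}$ for every $h\geq 0$ is exactly the assertion that no integer $s>t_0$ belongs to $A$. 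Since $t_0\geq 0$ we also get $j(2)=t_0+1>0$. Thus the entire content of the statement reduces to the two claims that $A\neq\emptyset$ and that $A$ is finite.

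Boundedness is immediate: as $n=2>1=\deg p$, the difference $\binom{t+2}{2}-p(t)$ is a degree-two polynomial in $t$ with positive leading coefficient, hence it is positive for all large $t$, so $A$ is finite and $\max A$ exists. The non-emptiness of $A$ is the crux, and it is the step I expect to be the main obstacle. I would exhibit the explicit element $t=d-1$: since $p(d-1)=d(d-1)+1-g=\tfrac{d^2+d}{2}+\big(\binom{d-1}{2}-g\big)$ and $\binom{(d-1)+2}{2}=\binom{d+1}{2}=\tfrac{d^2+d}{2}$, we have $d-1\in A$ precisely when $g\leq\binom{d-1}{2}$. This last inequality is nothing but the admissibility bound on $p$, equivalently $r=\binom{d}{2}+1-g\geq d$, which is forced by the Gotzmann form of a degree-one admissible polynomial recalled in Example \ref{numeroGgrado1} (a valid such form must contain at least the $d$ summands of degree one, so $r\geq d$). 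Hence $d-1\in A$, giving $t_0\geq d-1\geq 0$, and the construction above concludes the argument.

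I would close by remarking that the choice $n=2$ is only the most economical one: for any $n\geq 2$ for which the analogous set $A_n:=\{t\in\mathbb N : \binom{t+n}{n}\leq p(t)\}$ is non-empty, the same recipe $j(n):=1+\max A_n$ works verbatim, because $\binom{t+n}{n}-p(t)$ still has degree $n\geq 2$ and therefore grows without bound. The only point that ever requires checking is the non-emptiness of $A_n$, and the computation above settles it for $n=2$ using admissibility alone.
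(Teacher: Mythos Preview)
Your proof is correct and follows essentially the same route as the paper: take $n=2$, use the admissibility bound $g\leq\binom{d-1}{2}$ to place $t=d-1$ in the set $A$, and then read off $j(2)$ from the threshold. The only difference is in the ``finiteness'' step: the paper invokes the Castelnuovo--Mumford regularity bound $reg\leq r$ to get $p(r)<\binom{r+2}{2}$ directly (which also yields the explicit estimate $d\leq j(2)\leq r$), whereas you use the elementary observation that $\binom{t+2}{2}-p(t)$ has degree two with positive leading coefficient. Your formulation via $j(2)=1+\max A$ is in fact cleaner than the paper's, which leaves the definition of $j(n)$ and the verification of the ``for every $h\geq 0$'' clause implicit.
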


\begin{proof}
Any projective scheme of dimension $1$ with Hilbert polynomial $p(z)$ has regularity $\leq r$, so that 
$p(r)<\binom{r+n}{n}$, for any $n\geq 2$. Now, it is enough to show that there exist integers $n\geq 2$ and 
$t<r$ such that $p(t)\geq \binom{t+n}{n}$. Since in the plane, i.e. for $n=2$, it holds 
$g\leq \frac{1}{2}(d-1)(d-2)$, then $p(t)=dt+1-g\geq dt+1-\frac{1}{2}(d-1)(d-2)$ and, for $t=d-1$, we have 
$d(d-1)+1-\frac{1}{2}(d-1)(d-2)=\binom{d-1+2}{2}$. Thus, $n=2$, $d\leq j(n)\leq r$.  
\end{proof}

\begin{proposition} \label{gen-segment}
Let $p(z)=dz+1-g$ be an admissible polynomial. For any $n\geq 2$ there exists a gen-segment ideal 
$I(n)\subset S$ w.r.t. revlex order with Hilbert polynomial $p(z)$.
\end{proposition}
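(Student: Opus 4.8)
The plan is to construct the gen-segment ideal $I(n)$ explicitly by
building its complement $\mathcal{N}(I(n))$ degree by degree, using the
decomposition of $\mathbb{T}_j$ with respect to the revlex order recorded in
$(\star\star)$ of Remark~\ref{term orders}(2). The guiding principle behind a
gen-segment ideal w.r.t. revlex is that at each relevant degree the minimal
generators are taken to be the \emph{greatest} admissible terms, equivalently
the sous-\'escalier collects the \emph{smallest} terms. So the natural candidate
is to prescribe, in each degree $t$, that $\mathcal{N}(I(n))_t$ consist of
exactly the smallest terms of $\mathbb{T}_t$ (in revlex) whose count matches the
required Hilbert function value of a scheme with Hilbert polynomial
$p(z)=dz+1-g$. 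The key difficulty is that ``smallest $H(t)$ terms'' need not by
itself define a Borel ideal nor a \emph{saturated} one, so the construction must
be arranged so that the complement is simultaneously closed under $e^-_j$ and
stable under dividing by $x_0$.

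\textbf{First} I would invoke Lemma~\ref{j(n)} to pin down the pivotal degree
$j(n)$, with $d\leq j(n)\leq r$, at which the Hilbert function first drops below
$\binom{t+n}{n}$. Below degree $j(n)$ the scheme should be as full as possible,
so I would set $I(n)$ to contain nothing new and let
$\mathcal{N}(I(n))_t=\mathbb{T}_t$ for $t<j(n)$; the interesting combinatorics
begins at degree $j(n)$. \textbf{Second}, at degree $j(n)$ I would define
$\mathcal{N}(I(n))_{j(n)}$ to be the smallest $p(j(n))$ terms of
$\mathbb{T}_{j(n)}$ with respect to revlex. Using the layered form $(\star\star)$
(and the explicit description of $\Lambda_\omega$ in Remark~\ref{term
orders}(3)), these smallest terms automatically come in the shape
$\bigsqcup_{i=0}^{\gamma}x_i\{\tau:\min(\tau)\geq i\}$ together with a partial
top layer, which is precisely the shape of a Borel complement; this is what makes
the Borel property fall out for free rather than needing to be checked by hand.

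\textbf{Third}, for degrees $t>j(n)$ I would propagate the construction by the
expansion formula of Remark~\ref{espansione}: taking $N=\mathcal{N}(I(n))_t$, the
candidate sous-\'escalier in degree $t+1$ lives inside $x_0N\sqcup
x_1\{x^\alpha\in N:\min\geq 1\}\sqcup\cdots$, and I would select inside this set,
again, the smallest terms in revlex so that $|\mathcal{N}(I(n))_{t+1}|=p(t+1)$.
Because $p$ has degree $1$ and the complement stabilizes in cardinality once past
$j(n)$, the revlex-smallest choice forces every new minimal generator to be the
greatest available term among those of degree $t+1$ not already in $\langle
I(n)_t\rangle$, which is exactly Definition~\ref{def:segments}(c). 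The main
verification here is that the smallest-term rule is compatible across degrees,
i.e. that multiplying the chosen degree-$t$ complement by $x_0$ and adjoining
only $x_0$-free small terms reproduces the smallest-term complement in degree
$t+1$; this compatibility, together with Proposition~\ref{reeves}(i), is what
guarantees that $I(n)$ is saturated.

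\textbf{The hard part will be} confirming that the ``greatest generators''
condition of the gen-segment property holds \emph{simultaneously} at every degree
for a single ideal, rather than degree-by-degree for unrelated choices — in other
words, that the smallest-terms prescription is internally consistent and yields a
genuine Borel ideal whose Hilbert polynomial is exactly $p(z)=dz+1-g$ for every
$n\geq 2$. I expect this to reduce to a careful bookkeeping argument comparing
$\lambda_{i,t}$ counts (as in Remark~\ref{espansione}) with the binomial
expression for $p(t)$, using Lemma~\ref{j(n)} to control the range where the
Hilbert function is still maximal and where it has settled to the polynomial.
Once the cardinalities match at the two critical degrees $j(n)$ and $j(n)+1$,
Theorem~\ref{grado0}-style stabilization and the non-increasing behaviour of the
relevant differences should close the argument, and the gen-segment property
follows because at each step the complement was chosen to be revlex-minimal by
construction.
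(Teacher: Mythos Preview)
Your plan coincides with the paper's argument for those $n$ at which the pivotal degree $j(n)$ of Lemma~\ref{j(n)} actually exists, but there is a genuine gap: Lemma~\ref{j(n)} does \emph{not} furnish such a $j(n)$ for every $n\ge 2$. Its statement only asserts the existence of \emph{some} pair $(n,j(n))$ with $\binom{j(n)-1+n}{n}\le p(j(n)-1)$, and the proof shows that $n=2$ always works (this is where the bounds $d\le j(2)\le r$ you quote come from). For larger $n$ that inequality can fail at every degree, and then your ``take the revlex-smallest $p(t)$ terms and propagate by Remark~\ref{espansione}'' recipe breaks down: the expansion of $N_t$ has cardinality $\sum_{i\ge 0}\lambda_{i,t}(N_t)$, and you need this to be at least $p(t+1)=p(t)+d$, i.e.\ $\sum_{i\ge 1}\lambda_{i,t}(N_t)\ge d$. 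For $p(z)=6z-3$ and $n=4$, one checks directly that whatever starting degree you declare pivotal, the set $\Lambda_{p(t),t}$ contains fewer than $d=6$ terms with $\min\ge 1$, so there are not enough terms in the expansion to select and the construction halts. No bookkeeping fixes this; the obstruction is structural, not a matter of matching $\lambda_{i,t}$ counts.

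The paper closes this gap with a second case you did not anticipate. One takes the largest $n_0$ for which $j(n_0)$ exists, runs (essentially your) construction in $K[x_0,\dots,x_{n_0}]$ to obtain $I(n_0)$, and then for $n>n_0$ sets $I(n):=(I(n_0),x_{n_0+1},\dots,x_n)$; adjoining the top variables as degree-$1$ generators preserves both the gen-segment property w.r.t.\ revlex and the Hilbert polynomial. With that reduction in hand, the rest of your outline --- sous-\'escalier $\Lambda_{p(j(n)),j(n)}$ at the pivotal degree, propagation giving $N(t)=x_0N(t-1)\sqcup x_1^{t-j(n)}\{\tau_1,\dots,\tau_d\}$, and no minimal generators beyond degree $j(n)+1$ --- is exactly the paper's construction in the main case.
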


\begin{proof} By Lemma \ref{j(n)} we can take an integer $n\geq 2$ for which there exists $j(n)>0$ such that 
$\binom{j(n)-1+n}{n}\leq p(j(n)-1)$ and $p(j(n)+h)<\binom{j(n)+h+n}{n}$ for every $h\geq 0$. First, we prove 
the thesis in this case.

Under the given assumptions we have $p(j(n))-\binom{j(n)-1+n}{n}\geq d=p(j(n))-p(j(n)-1)$, thus, by Remark 
\ref{espansione} (3), in $\Lambda_{p(j(n)),j(n)}$ there are at least $d$ terms $x^{\alpha}$ such that 
$\min(x^{\alpha})\geq1$ and we let $\tau_1<\cdots<\tau_d$ be the least  w.r.t. revlex order among them. We also 
set $N(t):=\mathbb T_t$, for every $0\leq t<j(n)$, $N(j(n)):=\Lambda_{p(j(n)),j(n)}$, $N(t):=x_0\cdot N_{t-1}\sqcup x_1^{h}\cdot\{\tau_1,\ldots,\tau_d\}$, 
for every $t=j(n)+h, h\geq 1$ and $N:=\sqcup_{t\geq 0} N(t)$. By construction $N\subset\mathbb T$ is such that  
$N_t=N(t), \forall\ t\geq0$ and $\mid N_t\mid=p(t), \forall\ t\geq j(n).$ Thus, the monomial ideal 
$I(n)\subset S$ such that $\cn(I(n))=N$ is, by construction, a gen-segment ideal with Hilbert polynomial $p(z)$. 
Moreover $G(I(n))_t=\emptyset$, for $t<j(n) \text{ and } t>j(n)+1$, so that $reg(I(n))\leq j(n)+1\leq r$.

Now, suppose that $n$ is such that $p(t)<\binom{t+n}{n}$ for every $t\geq 0$ and let $n_0:=\max\{n' \ \vert \ \exists j(n') : \binom{j(n')-1+n'}{n'}\leq p(j(n')-1) \text{ and } p(j(n'))<\binom{j(n')+n'}{n'}\}$. 
Above we proved that for such an $n_0$ there exists a gen-segment ideal $I(n_0)\subset K[x_0,\ldots,x_{n_0}]$ 
w.r.t. revlex order with Hilbert polynomial $p(z)$. Now, it is enough to observe that $I(n):=(I(n_0),x_{n_0+1},\ldots,x_n)\subset S$ 
is a gen-segment ideal  w.r.t. revlex order as claimed.
\end{proof}

\begin{remark}\label{rem:gen-segment}\rm 
Given an admissible polynomial $p(z)=dz+1-g$, if $n> 2$ is such that $p(t)<\binom{t+n}{n}$ for every $t\geq 0$ 
and \lq\lq there exists $l(n):=\min\{l\in \mathbb N : \sum_{i=1}^{n-1}\lambda_{i,l}(N_{l})\geq d\}$\rq\rq, 
by a similar procedure we can construct a gen-segment ideal $J(n)\subset S$ w.r.t. revlex order with Hilbert 
polynomial $p(z)$  different from those coming from the smaller $n'$'s as in the proof of Proposition 
\ref{gen-segment}. Indeed, under the given assumptions, $\Lambda_{p(l(n)),l(n)}\subset\mathbb T_{l(n)}$ does 
no longer contain at least $d$ terms $x^{\alpha}$ with $\min(x^{\alpha})\geq 1$, but surely its expansion in 
degree $l(n)+1$ does it and we let $\bar{\tau_1}<\cdots<\bar{\tau_d}$ be the least  w.r.t. revlex order of them. 
Similarly as before, we take $M(t):=\mathbb T_t$, for every $0\leq t<l(n)$, $M(l(n)):=\Lambda_{p(l(n)),l(n)}$, 
$M(l(n)+1):=x_0\cdot M(l(n))\sqcup \{\bar{\tau_1},\ldots,\bar{\tau_d}\}$, $M(t):=x_0\cdot M(t-1)\sqcup x_1^{t-l(n)-1}\{\bar{\tau_1},\ldots,\bar{\tau_d}\}$ 
for every $t>l(n)+1$. We finally let $J(n)$  be the gen-segment ideal such that $\cn(J(n))=M:=\sqcup_{t\geq 0} M(t)$ 
and note that it has $p(z)$ as Hilbert polynomial and regularity $\leq l(n)+2$.
\end{remark}

\begin{example}\rm 
(1) The Gotzmann number of the admissible polynomial $p(z)=6z-3$ is $12$ and we obtain the following 
gen-segment ideals:
\begin{itemize}
\item[(i)] if $n=2$, we can apply the procedure described in the proof of Proposition \ref{gen-segment} with 
$j(2)=9$ and construct the ideal $I(2)=(x_2^9,x_1 x_2^8,x_1^2 x_2^7,x_1^3 x_2^6)$; 
\item[(ii)] if $n=3$, there is not a $j(3)$, yet we can apply the procedure described in Remark 
\ref{rem:gen-segment} with $l(3)=2$, since $p(t)<\binom{3+t}{t}$, for every $t>0$, obtaining  
$J(3)=(x_3^2,x_2^2x_3,x_2^4)$ besides $(I(2),x_3)$;
\item[(iii)] if $n\geq 4$, neither  $j(n)$ nor $l(n)$ exist and we have only $(I(2),x_3,\ldots,x_n)$ and 
$(J(3),x_4,\ldots,$ $x_n)$.
\end{itemize}

(2) The Gotzmann number of the admissible polynomial $p(z)=7z+1$ is $22$ and we have 
$j(2)=12,\ j(3)=4,\ j(4)=3,\ j(5)= j(6)=j(7)=2$, while for $n\geq 8$, there is not a $j(n)$. Moreover, 
by definition of $j(n)$, we have $\alpha(n):=p(j(n))-\binom{j(n)-1+n}{n}\geq 7,$ thus we can apply the 
procedure described in the proof of Proposition \ref{gen-segment} and 
\begin{enumerate}[(i)]
\item for $n=2$, as $\alpha(2)=85-78=7$, we obtain $I(2)=(x_2^{12},x_1 x_2^{11},\ldots,x_1^5 x_2^7)$ 
($reg(I(2))=j(2)$);
\item for $n=3$, as $\alpha(3)=29-20=9>7$, we obtain $I(3)=(x_3^{4},x_2 x_3^{3},x_2^2 x_3^{2},x_2^3 x_3,x_2^4,$ $x_1 x_3^3,x_1^2 x_2 x_3^2,x_1^2 x_2^2 x_3)$ ($reg(I(3))=j(3)+1$), besides $(I(2),x_3)$;
\item for $n=4$, as $\alpha(4)=22-15=7$, we obtain $I(4)=((x_{4},x_3,x_2)^3,x_1(x_4,x_3)^2)$ ($reg(I(4))=j(4)$), besides $(I(2),x_3,x_4)$ and $(I(3),x_4)$;
\item for $n=5$, as $\alpha(5)=15-6=9>7$, we obtain $I(5)=((x_{5},x_4,x_3)^2,x_2^2 x_5, x_1x_2x_5,$  $x_2^2 x_4,x_1x_2x_4, x_2^2 x_3, x_2^3)$\hfill ($reg(I(5))=j(5)+1$),\hfill besides\hfill $(I(2),x_3,x_4,x_5)$,\hfill $(I(3),$ $x_4,x_5)$,\\ $(I(4),x_5)$;
\item for $n=6$, as $\alpha(6)=15-7=8>7$, we obtain $I(6)=((x_6,x_{5},x_4,x_3)^2,x_2x_6,x_2x_5,$ $x_2x_4,x_2^2x_3, x_1x_2x_3, x_2^3)$,  ($reg(I(6))=j(6)+1$), besides $(I(2),x_3,x_4,x_5,x_6)$, $(I(3),x_4,$ $x_5,x_6)$, $(I(4),x_5,x_6)$ and $(I(5),x_6)$;
\item for $n=7$, as $\alpha(7)=15-8=7$, we obtain $I(7)=(x_7,x_6,x_{5},x_4,x_3,x_2)^2$,  
($reg(I(7))=j(7)$), besides $(I(2),x_3,x_4,x_5,x_6,x_7)$, $(I(3),x_4,x_5,x_6,x_7)$, $(I(4),x_5,$ $x_6,x_7)$, $(I(5),x_6,x_7)$ and $(I(6),x_7)$;
\item for $n\geq 8$, $j(n)$ does not exist yet, as $\lambda_{1,1}(\Lambda_{p(1),1})\geq7$ we have $J(n)=(J(8),x_9,\ldots,$ $x_n)$ besides the ideals coming from $I(m), m\leq7$.
\end{enumerate}
\end{example}

\begin{proposition}
The saturated segment ideal $L(p(z))\subset S$ w.r.t. lex order with Hilbert polynomial $p(z)$ is a gen-segment 
ideal w.r.t. the revlex order if and only if $\deg(p(Z))\leq 1$ or there are only two generators of degree $>1$.
\end{proposition}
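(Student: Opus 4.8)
The plan is to reduce to the codimension-one case and then read the answer off the minimal generators. First I would peel off the linear part of $L:=L(p(z))$. Since $L$ is saturated and lex, Proposition \ref{reeves}(i) gives that its minimal generators contain no $x_0$, and Proposition \ref{reeves}(ii) together with the lex condition shows that its degree-one generators are exactly $x_{\ell+2},\dots,x_n$ (with $\ell=\deg p$), so $L=(x_{\ell+2},\dots,x_n)+\bar L$ with $\bar L\subset K[x_0,\dots,x_m]$, $m=\ell+1$, saturated lex and without linear generators. A short argument shows $\bar L$ is non-degenerate of codimension one: if $x_{m-1}^t\in\bar L$ then by the lex property all degree-$t$ multiples of $x_m$ lie in $\bar L$, forcing $x_m\in\bar L^{\mathrm{sat}}=\bar L$, a contradiction. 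Because $x_{\ell+2},\dots,x_n$ are the revlex-largest linear terms, $L$ is a gen-segment ideal for revlex if and only if $\bar L$ is, and the generators of $L$ of degree $>1$ are precisely those of $\bar L$, all of degree $\ge 2$. This reduces everything to $\bar L$, and turns $\deg p\le 1$ into $m\le 2$.

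Next I would establish the block structure of $\bar L$. Grouping monomials by $x_m$-degree and using the lex decomposition $(\star)$, the sous-\'escalier $\mathcal N(\bar L)$ has full blocks in $x_m$-degrees $0,\dots,d-2$ and a single truncated top block in $x_m$-degree $d-1$ (where $x_m^d\in G(\bar L)$), the truncation being governed by a saturated lex ideal $L'\subset K[x_0,\dots,x_{m-1}]$ through $\mathcal N(\bar L)_s=\bigsqcup_{i=0}^{d-2}x_m^i\,K[x_0,\dots,x_{m-1}]_{s-i}\ \sqcup\ x_m^{d-1}\mathcal N(L')_{s-d+1}$. From this I read off $G(\bar L)=\{x_m^d\}\cup x_m^{d-1}G(L')$, so the number of generators of degree $>1$ equals $1+|G(L')|$, and $\bar L$ has at most two of them exactly when $L'=(0)$ or $L'=(x_{m-1}^e)$ (the only principal saturated lex ideals), i.e. when $\bar L=(x_m^d)$ or $\bar L=(x_m^d,\,x_m^{d-1}x_{m-1}^e)$.

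For the implication from the generator condition to gen-segment I would use the revlex fact, immediate from $(\star\star)$, that any $x_0$-free term is revlex-larger than every term of the same degree divisible by $x_0$; since generators are $x_0$-free by saturation, a generator can fail the gen-segment test only if some $x_0$-free \emph{available} term is revlex-above it. In the two displayed cases the only generators are $x_m^d$ and possibly $x_m^{d-1}x_{m-1}^e$, both supported on $\{x_{m-1},x_m\}$: the $x_0$-free terms of the same degree revlex-above $x_m^{d-1}x_{m-1}^e$ are those with smaller $x_{m-1}$-exponent, which are divisible by $x_m^d$ and hence not available, while every $x_0$-free term meeting some $x_j$ with $j\le m-2$ is revlex-below it. This shows $\bar L$ is a gen-segment ideal; in particular it settles the case $m\le 2$ (where $L'\subset K[x_0,x_1]$ is automatically principal or zero), i.e. $\deg p\le 1$.

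The remaining, and hardest, direction is to show that three or more generators of degree $>1$ together with $\deg p\ge 2$ (that is, $m\ge 3$) destroy the property. Then $|G(L')|\ge 2$, so $L'$ has a minimal generator $g$ that is not a pure power of $x_{m-1}$; writing $j^{*}=\min\{j:g_j>0\}$ one has $1\le j^{*}\le m-2$, and $\gamma:=x_m^{d-1}g$ is a generator of $\bar L$ of degree $s:=d-1+\deg g$. The key step is to exhibit the violating term $\tau:=x_m^{d-2}x_{m-1}^{\deg g+1}$: it lies in the full block of $x_m$-degree $d-2\ge 0$, hence is available and not in $\bar L$, it is $x_0$-free, and comparing exponents at the index $j^{*}$ (where $\tau$ has a $0$ and $\gamma$ has $g_{j^{*}}>0$, all lower indices agreeing at $0$) gives $\tau\succ_{\mathrm{revlex}}\gamma$. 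Thus an available non-generator sits revlex-above a generator of the same degree, so the gen-segment condition fails. I expect the main obstacle to be justifying the block structure and the resulting shape $G(\bar L)=\{x_m^d\}\cup x_m^{d-1}G(L')$ rigorously; once these are in hand, producing the single term $\tau$ makes the failure transparent.
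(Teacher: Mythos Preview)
Your argument is correct, and it reaches the same conclusion as the paper by a somewhat different route.

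The paper works directly with the explicit Macaulay/Hartshorne parameters $a_\ell,a_{\ell-1},\ldots,a_0$ of $p(z)$, writing out the minimal generators of $L(p(z))$ as $x_{\ell+1}^{a_\ell+1}$, $x_\ell^{a_{\ell-1}+1}x_{\ell+1}^{a_\ell}$, $x_{\ell-1}^{a_{\ell-2}+1}x_\ell^{a_{\ell-1}}x_{\ell+1}^{a_\ell},\ldots$ and then tracking, degree by degree, the greatest term of $\mathcal N(L(p(z)))$ with respect to lex versus revlex. It observes that these two greatest terms agree through the degree of the second non-linear generator and diverge immediately afterward (the revlex-greatest becoming $x_\ell^{a_{\ell-1}+2}x_{\ell+1}^{a_\ell-1}$), so the third non-linear generator is no longer the revlex-top available term. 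Your approach instead packages the same structure recursively as $\bar L=(x_m^d)+x_m^{d-1}L'$ with $L'\subset K[x_0,\ldots,x_{m-1}]$ saturated lex, and then produces a single explicit witness $\tau=x_m^{d-2}x_{m-1}^{\deg g+1}$ against any generator $x_m^{d-1}g$ with $g\in G(L')$ not a power of $x_{m-1}$. The revlex comparison at the index $j^*=\min(g)$ and the membership check $\tau\notin\bar L$ (since its $x_m$-degree is $d-2$) are both clean; note you implicitly use $d\ge 2$, which holds because $\bar L$ has no linear generators.

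What each approach buys: the paper's use of the $a_i$ makes the block structure you single out as ``the main obstacle'' essentially free, since the exact generator list of the saturated lex ideal in terms of the $a_i$ is classical (cited in the paper). Your recursive formulation $G(\bar L)=\{x_m^d\}\cup x_m^{d-1}G(L')$ is the same fact restated, and once you appeal to that known description it is immediate. Conversely, your argument is tidier in the positive direction (the reduction via $(\star\star)$ to $x_0$-free competitors supported on $\{x_{m-1},x_m\}$ is crisp) and isolates a single violating monomial rather than tracking a running ``greatest'' through several degrees.
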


\begin{proof}
In section $1$ we have already recalled that, given an admissible polynomial $p(z)$ of degree $\ell$, there 
exist unique integers $m_0\geq m_1\geq \cdots\geq m_{\ell}\geq 0$ such that $p(z)=\sum_{i=0}^{\ell} \binom{z+i}{i+1}- \binom{z+i-m_i}{i+1}$ \cite{M,H66,B}. 
Let $a_{\ell}:=m_{\ell}, a_{\ell-1}:=m_{\ell-1}-m_{\ell},\ldots,a_0:=m_0-m_1$. Note that $L(p(z))\subset S$ has 
the $n+1-\ell-2$ greatest variables as generators of degree $1$, i.e. $\mathcal N(L(p(z)))_1= \{x_0,\ldots, x_{\ell+1}\}$. 
Thus, for every $j\leq a_{\ell}$, the greatest term of $\mathcal N(L(p(z)))_j$ is $x^j_{\ell+1}$, w.r.t. both 
lex and revlex orders (namely $\mathcal N(L(p(z)))_j=\mathbb T_j\cap K[x_0,\ldots,x_{\ell+1}]$). In degree 
$a_{\ell}+1$ the ideal $L(p(z))$ has a new generator $x_{\ell+1}^{a_{\ell}+1}$, so that $\mathcal N(L(p(z)))_{a_{\ell}+1}=(\mathbb T_{a_{\ell}+1}\cap K[x_0,\ldots,x_{\ell+1}]) \setminus \{x_{\ell+1}^{a_{\ell}+1}\}$, 
therefore its greatest term, w.r.t. both lex and revlex orders, is $x_{\ell}x_{\ell+1}^{a_{\ell}}$, and so on 
until there is a new generator in degree $a_{\ell}+a_{\ell-1}+1$ if $a_{l-2}\not=0$, which is $x_{\ell}^{a_{\ell-1}+1} x_{\ell+1}^{a_{\ell}}$ 
(or, if $a_{\ell-2}=0$, the new generator is $x_{\ell}^{a_{\ell-1}} x_{\ell+1}^{a_{\ell}}$). At this point, 
the greatest term in $\mathcal N(L(p(z)))_{a_{\ell}+a_{\ell-1}+1}$ is $x_{\ell}^{a_{\ell-1}+2} x_{\ell+1}^{a_{\ell}-1}$ 
w.r.t. revlex order and $x_{\ell-1}x_{\ell}^{a_{\ell-1}} x_{\ell+1}^{a_{\ell}}$ w.r.t. lex order (similarly for the 
case in parenthesis). Moreover, since the new generator of $L(p(z))$ at degree $a_{\ell}+a_{\ell-1}+a_{\ell-2}+1$ 
is $x_{\ell-1}^{a_{\ell-2}+1}x_{\ell}^{a_{\ell-1}} x_{\ell+1}^{a_{\ell}}$ (if $\ell=2$ the third generator of 
degree $>1$ is $x_1^{a_0} x_2^{a_1} x_3^{a_2}$ or, if $a_{\ell-3}=0$, is  $x_{\ell-1}^{a_{\ell-2}}x_{\ell}^{a_{\ell-1}} x_{\ell+1}^{a_{\ell}}$), 
it is not the greatest term w.r.t. revlex order.
\end{proof}

\begin{example}\label{lexsaturato}\rm 
(i) The ideal $L(p(z))=(x_4,x_3^5,x_2^3x_3^4,x_1^6 x_2^2 x_3^4)$ is the saturated segment ideal w.r.t. lex 
in $K[x_0,\ldots,x_4]$, with Hilbert polynomial $p(z)=2z^2+2z+1$ and Gotzmann number $12$, but is not a 
gen-segment ideal w.r.t. revlex order.

(ii) The ideal $L(p(z))=(x_5,x_4^5,x_3^2 x_4^4)$ is the saturated segment ideal w.r.t. lex in $K[x_0,\ldots,x_5]$, 
with Hilbert polynomial $p(z)=2/3z^3+2z^2-11/3z+10$ and Gotzmann number $6$, and is also a gen-segment ideal 
w.r.t. revlex order.
\end{example}

%%%%%%%%%%%%%%%%%%%%%%%%%%%%%%%%%%%%%%%%%%%%%%%%%%%%%%%%
%% Saturations of Borel ideals and Hilbert polynomial %%
%%%%%%%%%%%%%%%%%%%%%%%%%%%%%%%%%%%%%%%%%%%%%%%%%%%%%%%%

\section{Saturations of Borel ideals and Hilbert polynomial} 

Let $J\subset S$ be a Borel ideal. Recall that in our notation the (Borel) ideal $J^{sat}$ is obtained by setting $x_0=1$ in each minimal generator of $J$ (Proposition \ref{reeves}(i)). In this section we let $J_{x_0}:=J^{sat}$ and denote $J_{x_0x_1}$  the Borel ideal obtained by setting $x_0=x_1=1$ in the minimal generators of $J$. We call $J_{x_0x_1}$ the {\it $x_1$-saturation} of $J$ and say that $J$ is {\it $x_1$-saturated} if $J=J_{x_0x_1}$, so that an ideal $ J \, x_1$-saturated is also saturated.

\begin{remark}\label{polinomio}\rm
 An  ideal $J\subset S$, which is $x_1$-saturated and has Hilbert polynomial $p(z):=p_{{S}/{J}}(z)$, has the same minimal generators of the saturated Borel ideal $J\cap K[x_1,\ldots,x_n]\subset K[x_1,\ldots,x_n]$, for which the Hilbert polynomial is $\Delta p(z)$. 
\end{remark}

The following result is analogous to Theorem 3 of \cite{RA}, where the notion of \lq\lq fan\rq\rq\ is used. 
Here we apply the combinatorial properties of Borel ideals only.

\begin{proposition}\label{differenza}
Let $J\subset S$ be a saturated Borel ideal with Hilbert polynomial $p(z)$ and Gotzmann number $r$. 
Let $I=J_{x_0x_1}$ be the $x_1$-saturation of $J$ and let $q:=\dim_K I_r- \dim_K J_r$. Then
\begin{itemize}
\item[(i)] $p_{S/I}(z)=p(z)-q$;
\item[(ii)] $q$ is equal to the sum of the exponents of $x_1$ in the minimal generators of $J$.
\end{itemize}
\end{proposition}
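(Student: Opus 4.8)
=== BEGIN PROOF PROPOSAL ===

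\textbf{Overview of the approach.} The plan is to relate the ideals $J$ and $I = J_{x_0 x_1}$ degree by degree in the top degree $r$, where the Gotzmann number guarantees that both Hilbert functions have stabilized to their polynomial values. Since $r \geq \mathrm{reg}(J)$ and $J$ is saturated, we have $H_{S/J}(r) = p(r)$; similarly, once I verify that $I$ is generated in degree $\leq r$ (which follows because $I$ is obtained from $J$ by setting $x_0 = x_1 = 1$ in the minimal generators, so its generators have degree bounded by those of $J$), I will also have $H_{S/I}(r) = p_{S/I}(r)$. The key identity will be $\dim_K I_r - \dim_K J_r = \dim_K(S/J)_r - \dim_K(S/I)_r = p(r) - p_{S/I}(r) = q$, which, since both Hilbert functions are polynomials for all degrees $\geq r$ and $I \supseteq J$ with $I/J$ a finite-length-like difference, forces the \emph{constant} difference $p(z) - p_{S/I}(z) = q$ asserted in (i).

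\textbf{Proving part (i).} First I would establish that $I \supseteq J$ (each minimal generator of $J$ divides the corresponding generator of $I$, obtained by lowering the exponents of $x_0$ and $x_1$), so $p_{S/I}(z) \leq p(z)$ and the difference $\dim_K I_t - \dim_K J_t$ is well defined. The crucial point is that this difference is \emph{constant} for $t \geq r$. To see this, I would use the Borel structure together with Remark \ref{espansione}: in degrees $\geq r$ neither ideal acquires new minimal generators (both are generated in degree $\leq r$), so the quotients grow in the controlled manner described by the expansion formula, and the Hilbert functions of both $S/J$ and $S/I$ agree with their respective polynomials from degree $r$ on. Evaluating the difference of Hilbert polynomials at $z = r$ gives $p(r) - p_{S/I}(r) = \dim_K I_r - \dim_K J_r = q$, and since the difference of the two Hilbert functions is eventually constant (equal to $q$) while each is polynomial, the polynomials themselves differ by the constant $q$, i.e. $p_{S/I}(z) = p(z) - q$.

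\textbf{Proving part (ii) and the main obstacle.} For (ii), I would give an explicit combinatorial count of $\dim_K I_r - \dim_K J_r$ in terms of the minimal generators. The idea is that passing from $J$ to $I = J_{x_0 x_1}$ enlarges the ideal precisely by the terms whose exponents on $x_1$ have been reduced; more concretely, for each minimal generator $x^\gamma \in G(J)$ with $x_1$-exponent $\gamma_1$, the saturation with respect to $x_1$ adjoins the $\gamma_1$ intermediate terms $x^\gamma / x_1, x^\gamma/x_1^2, \ldots, x^\gamma/x_1^{\gamma_1}$ (suitably homogenized), and I would argue via the Borel property that in degree $r$ these contributions are disjoint across distinct generators and each contributes exactly one new dimension per unit of $x_1$-exponent. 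Summing over $G(J)$ then yields $q = \sum_{x^\gamma \in G(J)} \gamma_1$. \textbf{The main obstacle} I anticipate is precisely this disjointness-and-no-overcounting claim: I must rule out that two different intermediate terms (from the same or different generators) coincide in degree $r$ after multiplying up by powers of $x_0$, and confirm that every term counted in $I_r \setminus J_r$ is hit exactly once. I expect to resolve this by exploiting Proposition \ref{reeves}(i) (saturation sets $x_0 = 1$) together with the fact that $r$ is large enough — being the Gotzmann number — that the $x_0$-degrees provide enough room to separate all the shifted generators, making the correspondence between the new terms and the units of $x_1$-exponent a genuine bijection.

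=== END PROOF PROPOSAL ===
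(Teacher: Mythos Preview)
Your overall strategy matches the paper's for both parts, but there is a genuine gap in each half that you have not yet closed.

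\textbf{Part (i).} You assert that ``the difference of the two Hilbert functions is eventually constant'' and then deduce the result, but this is precisely the statement to be proved; neither regularity nor Remark \ref{espansione} alone gives it. Knowing that both Hilbert functions are polynomial for $t\geq r$ tells you only that their difference is a polynomial, not that it is a constant. The paper establishes constancy by showing directly that for $s\geq r$ the multiplication-by-$x_0$ map gives a bijection $I_s\setminus J_s \to I_{s+1}\setminus J_{s+1}$; the nontrivial half is that every monomial $x^\gamma\in I_{s+1}\setminus J_{s+1}$ is divisible by $x_0$. For this one writes $x^\gamma=x^\alpha x^\delta$ with $x^\alpha\in G(I)$ and $\min(x^\delta)\geq 1$, uses that $x^\alpha x_1^{a}\in G(J)$ for some $a$, and then invokes the Borel property together with $|\delta|\geq a$ (which uses $s\geq r\geq \mathrm{reg}(J)$) to force $x^\gamma\in J_{s+1}$. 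Your phrase ``$I/J$ a finite-length-like difference'' suggests the right intuition but is not a proof; if $I/J$ were literally of finite length the difference would be zero. You must supply an argument of the type above (or, equivalently, show $\Delta p_{S/I}=\Delta p_{S/J}$ via the common hyperplane section).

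\textbf{Part (ii).} Your plan coincides with the paper's: list, for each minimal generator $x^{\alpha_i}x_1^{s_i}\in G(J)$ (with $x_1\nmid x^{\alpha_i}$), the $s_i$ monomials $x^{\alpha_i}x_1^{s_i-t}x_0^{r-|\alpha_i|-s_i+t}$, $1\le t\le s_i$, and show they enumerate $I_r\setminus J_r$. Your ``main obstacle'' paragraph focuses on \emph{disjointness}, which is the easy direction (distinct minimal generators have distinct $x_1$-free parts, and then the $x_1$-exponent determines the monomial). The hard direction, which you do not address, is \emph{surjectivity}: why does every $x^\delta\in I_r\setminus J_r$ have its $x_{\geq 2}$-part equal to some $x^{\alpha_i}$ exactly, with no extra factor in $K[x_2,\dots,x_n]$? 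The paper handles this by taking the minimal $s$ with $x^\beta x_1^{s}\in J$ (where $x^\beta$ is the $x_{\geq 2}$-part of $x^\delta$), finding the generator $x^{\alpha_i}x_1^{s_i}$ dividing it, and then using the Borel property twice to force $x^\beta=x^{\alpha_i}$. Your proposed fix --- ``$r$ is large enough \ldots\ to separate all the shifted generators'' --- is not the relevant mechanism; largeness of $r$ is used only to ensure the exponent on $x_0$ is nonnegative, while the combinatorics forcing the bijection comes entirely from the Borel relation $<_B$.
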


\begin{proof}
(i) We show that if $q=\dim_K I_s-\dim_K J_s$ then $q=\dim_K I_{s+1}-\dim_K J_{s+1}$, for every $s\geq r$. 
Let $x^{\beta_1},\ldots,x^{\beta_q}$ be the terms of $I_s\setminus J_s$. Thus, $x_0x^{\beta_1},\ldots,x_0x^{\beta_q}$ 
are terms of $I_{s+1}\setminus J_{s+1}$ and so $\dim_K I_{s+1}-\dim_K J_{s+1}\geq q$, since $x_0x^{\beta_i}$ 
belongs to $J_{s+1}$ if and only if $x^{\beta_i}$ belongs to $J_{s}$ being $J$ saturated. Now, for obtaining the 
opposite inequality it is enough to show that every term of $I_{s+1}\setminus J_{s+1}$ is divided by $x_0$. 
Let $x^{\gamma}\in I_{s+1}$ be such that $\min (x^{\gamma})\geq1$ and let $x^{\alpha}$ be a minimal generator of $I$ such 
that $x^{\gamma}=x^{\alpha}x^{\delta}$. Since $J$ is saturated and $I$ is the $x_1$-saturation of $J$, 
$x^{\alpha}x_1^a$ is a minimal generator of $J$ for some non negative integer $a$. Hence, for every $x^{\delta'}$ of degree 
$s+1-\vert\alpha \vert$ and with $\min (x^{\delta'})\geq1$, by the Borel property 
$x^{\alpha}x^{\delta'}$ belongs to $J_s$.  In particular, $x^{\gamma}\in J_{s+1}$.

(ii) Let $x^{\alpha_1}x_1^{s_1}, \ldots, x^{\alpha_h}x_1^{s_h}$ be the minimal generators of $J$, with $x_1$ 
 not dividing $x^{\alpha_i}, \forall \, 1\leq i\leq h$. As the $\sum s_i$ terms $x^{\alpha_i}x_1^{s_i-t}x_0^{r-\vert\alpha_i\vert-s_i+t}, 1\leq t\leq s_i,$ 
are in $I_r\setminus J_r$, one has $q\geq \sum s_i$. Vice versa, we show that each term $x^{\delta}$ in $I_r\setminus J_r$ is of the previous type. We can write $x^{\delta}=x^{\beta} x_0^{r-\vert\beta\vert-u} x_1^u$, with $\min(x^{\beta})\geq2$ 
 and $u<s_i$. Let $s$ be the minimum non negative integer such that $x^{\beta} x_1^s$ is in $J$. 
Then there exists $i$ such that $x^{\alpha_i}x_1^{s_i} \vert x^{\beta}x_1^s$, i.e. $x^{\alpha_i} \vert x^{\beta}$ 
and $s_i\leq s$. By the definition of $s$, we get $s_i=s$ and there exists $x^{\gamma}$ with  $\min(x^{\gamma})\geq2$ 
such that $x^{\beta}=x^{\alpha_i}x^{\gamma}$. Since $x^{\beta}$ does not belong to $J$ we have $\vert\gamma\vert<s_i=s$,
 otherwise $x^{\alpha_i}x_1^{\vert\gamma\vert}$ and hence, 
by the Borel property, $x^{\beta}=x^{\alpha_i}x^{\gamma}$ should belong to $J$. Now we can take $x^{\beta}x_1^{s-\vert\gamma\vert}$ and observe that this term belongs to 
$J$ because it follows $x^{\alpha_i}x_1^s$ in the Borel relation. Thus $s\leq s-\vert \gamma \vert$, so that 
$\gamma=0$, i.e. $x^{\beta}=x^{\alpha_i}$ as claimed.
\end{proof}

\begin{proposition}\label{togliendo}
Let $J\subset S$ be a saturated Borel ideal with Hilbert polynomial $p(z)$ and Gotzmann number $r$. 
Let $x^{\beta}x_0$ be a term of $J$ of degree $s\geq r$ which is minimal in $J$ w.r.t. $<_B$. Then the ideal 
$I:=(G((J_s))\setminus \{x^{\beta}x_0\})$ is Borel and $p_{S/I}(z)=p(z)+1$.
\end{proposition}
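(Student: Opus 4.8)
The plan is to prove the two assertions separately, first that $I$ is Borel and then that its Hilbert polynomial is $p(z)+1$. Since $s\geq r\geq reg(J)$ and $J$ is saturated Borel, $J$ is generated in degrees $\leq s$, so $(J_s)=J_{\geq s}$ and the minimal generators $G((J_s))$ are exactly all the terms of $J_s$: the ideal $(J_s)$ has its minimal degree in $s$, so every element of the Borel set $J_s$ is a minimal generator. Hence $I=(J_s\setminus\{x^\beta x_0\})$. Because $x^\beta x_0$ is a minimal element of $J_s$ with respect to $<_B$, Remark \ref{cicca} tells us that $J_s\setminus\{x^\beta x_0\}$ is again a Borel set, and an ideal generated by a Borel set is a Borel ideal (Remark \ref{espansione}); this settles the first claim.

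For the Hilbert polynomial, write $\mu:=x^\beta x_0$ and set out to prove the sharp statement that $J_t\setminus I_t=\{x_0^{t-s}\mu\}$ for every $t\geq s$, so that $\dim_K J_t-\dim_K I_t=1$ and therefore $H_{S/I}(t)=H_{S/J}(t)+1$ for all $t\geq s$. Since $s\geq r\geq reg(J)\geq\rho_H$ we have $H_{S/J}(t)=p(t)$ for $t\geq s$, and as this forces $H_{S/I}(t)=p(t)+1$ for infinitely many $t$, the equality $p_{S/I}(z)=p(z)+1$ follows. The base case $t=s$ is immediate since $I_s=J_s\setminus\{\mu\}$. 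I would run an induction on $t$, the crux being the passage from degree $t$ to $t+1$: set $\nu:=x_0^{t-s}\mu$, the unique term of $J_t\setminus I_t$. As $J$ (resp.\ $I$) is generated in degree $\leq s\leq t$, one has $J_{t+1}=S_1\cdot J_t$ and $I_{t+1}=S_1\cdot I_t$; thus a term $m\in J_{t+1}\setminus I_{t+1}$ must be divisible by $\nu$ (so $m=x_i\nu$ for some $i$) and have $\nu$ as its only degree-$t$ divisor lying in $J$, since any other degree-$t$ divisor would already sit in $I_t$ and place $m$ in $I_{t+1}$.

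The key computation is an up/down-move dichotomy on the $n+1$ terms $x_i\nu$. First I would observe that $\nu$ is again minimal in $J_t$ with respect to $<_B$: for every admissible $j$ one has $\down_j(\nu)=x_0^{t-s}\down_j(\mu)$, and since $\down_j(\mu)\notin J$ by the minimality of $\mu$, saturation of $J$ gives $\down_j(\nu)\notin J$ as well. Now for $i\geq 1$ the term $\nu x_i/x_0$ is obtained from $\nu$ by up-moves, hence lies in $J_t$, is distinct from $\nu$, and therefore belongs to $I_t$; since it divides $x_i\nu$, we get $x_i\nu\in I_{t+1}$. For $i=0$, the degree-$t$ divisors of $x_0\nu$ other than $\nu$ are the terms $\nu x_0/x_k$ with $k>0$ and $\nu_k>0$, each obtained from $\nu$ by a down-move and hence lying in $\mathcal{N}(J)$ (which is closed under down-moves), so $\nu$ is the unique degree-$t$ divisor of $x_0\nu$ in $J$; thus $x_0\nu\in J_{t+1}\setminus I_{t+1}$. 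Consequently $J_{t+1}\setminus I_{t+1}=\{x_0\nu\}=\{x_0^{t+1-s}\mu\}$, closing the induction. The main obstacle is precisely this step: controlling that removing a single generator perturbs the sous-\'escalier by exactly one term in every degree, which rests on the saturation of $J$ (to keep $\nu$ minimal after multiplying by $x_0$) together with the fact that up-moves stay inside $J$ while down-moves from a minimal term leave it.
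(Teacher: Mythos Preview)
Your proof is correct and follows essentially the same strategy as the paper: after noting that removing a $<_B$-minimal term preserves the Borel property (Remark~\ref{cicca}), both you and the paper prove by induction on the degree that $J_{s+t}\setminus I_{s+t}=\{x_0^{t}\,x^\beta x_0\}$ for all $t\ge 0$. Your inductive step (the up/down-move dichotomy on the multiples $x_i\nu$, using that $\nu$ stays $<_B$-minimal after multiplying by $x_0$) spells out in slightly more detail exactly what the paper compresses into the phrases ``every degree $s$ factor of $x^{\beta}x_0^{1+t}$ different from $x^{\beta}x_0$ is lower w.r.t.\ $<_B$'' and ``every multiple of degree $s+t$ of $x^{\beta}x_0^{t}$, different from $x^{\beta}x_0^{1+t}$, belongs to $I_{s+t}$''.
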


\begin{proof}
First, note that $I_s$ is closed w.r.t. $<_B$ by Remark \ref{cicca}. We show that, for every $t\geq 0$, 
$x^{\beta}x_0^{1+t}$ is the unique term in $J_{s+t}\setminus I_{s+t}$. For $t=0$ we have the hypothesis. 
For $t>0$, note that $x^{\beta}x_0^{1+t}$ cannot belong to $I$. On the contrary, there would be a term 
$x^{\gamma}\in I_s$ such that $x^{\gamma}\not=x^{\beta}x_0$ and $x^{\gamma}\mid x^{\beta}x_0^{1+t}$. But 
every degree $s$ factor  of $x^{\beta}x_0^{1+t}$ different from $x^{\beta}x_0$ is lower w.r.t. $<_B$ and 
so it cannot belong to $I_s$. Then, $x^{\beta}x_0^{1+t}\notin I_{s+t}$. If $x^{\alpha}$ is a 
term of $J_{s+t}\setminus I_{s+t}$, there exists a term of $J_{s+t-1}\setminus I_{s+t-1}$ which divides 
$x^{\alpha}$. By induction, this term is $x^{\beta}x_0^{t}$ and the thesis follows from the fact that every 
multiple of degree $s+t$ of $x^{\beta}x_0^{t}$, different from $x^{\beta}x_0^{1+t}$, belongs to $I_{s+t}$. 
\end{proof}

\begin{proposition}\label{$x_1$-saturation}
Let $I$ and $J$ be Borel ideals of $S$. If for every $s\gg 0$ we have $I_s \subset J_s$ and 
$p_{S/I}(z)=p_{S/J}(z)+a$, with $a\in \mathbb N$, then $I$ and $J$ have the same $x_1$-saturation. 
\end{proposition}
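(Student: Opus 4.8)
The plan is to translate the combinatorial operation of $x_1$-saturation into a colon operation and then compare the two ideals in all sufficiently large degrees, where their difference turns out to be completely rigid. First I would record, using Proposition \ref{reeves}(i) together with the definition of $J_{x_0x_1}$, that for a Borel ideal $A$ the $x_1$-saturation is the iterated saturation
\[
A_{x_0x_1}=\bigl((A:x_0^\infty):x_1^\infty\bigr)=\bigcup_{k\geq 0}\bigl(A:(x_0x_1)^k\bigr);
\]
in particular $A_{x_0x_1}$ depends only on the graded pieces $A_t$ with $t\gg 0$, and it is unchanged if $A$ is replaced by $A^{sat}=A_{x_0}$. Hence I may assume from the start that $I$ and $J$ are saturated, since this preserves both Hilbert polynomials and the hypothesis $I_s\subseteq J_s$ for $s\gg 0$. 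As $I_s\subseteq J_s$ in large degree gives the inclusion $\bigcup_k(I:(x_0x_1)^k)\subseteq\bigcup_k(J:(x_0x_1)^k)$ immediately, the whole problem reduces to the reverse inclusion $J_{x_0x_1}\subseteq I_{x_0x_1}$.

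The central step is to understand the monomial set $D_t:=J_t\setminus I_t$ for $t$ large. Choosing $s_0$ so that for every $t\geq s_0$ one has $I_t\subseteq J_t$ and $H_{S/I}(t)=p_{S/I}(t)$, $H_{S/J}(t)=p_{S/J}(t)$, the assumption $p_{S/I}=p_{S/J}+a$ forces $|D_t|=\dim_K J_t-\dim_K I_t=a$ for all $t\geq s_0$. On the other hand, for a monomial $m\in D_t$ one has $x_0m\in J_{t+1}$, while $x_0m\in I$ would give $m\in(I:x_0^\infty)=I$ because $I$ is saturated; thus $x_0m\in D_{t+1}$, i.e. $x_0D_t\subseteq D_{t+1}$. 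Comparing cardinalities ($x_0$ acts injectively on monomials and $|D_t|=|D_{t+1}|=a$) I obtain the rigidity
\[
D_{t+1}=x_0\,D_t\quad(t\geq s_0),\qquad\text{hence}\qquad D_t=x_0^{\,t-s_0}\,D_{s_0}.
\]
Writing $v_0(\cdot)$ for the exponent of $x_0$ in a monomial, every element of $D_t$ thus has the shape $x_0^{\,t-s_0}m_0$ with $m_0\in D_{s_0}$, so its $x_0$-exponent equals $(t-s_0)+v_0(m_0)$ with $v_0(m_0)\leq s_0$ bounded independently of $t$.

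Finally I would run an \lq\lq escape\rq\rq\ argument to get the reverse inclusion. Let $u$ be a monomial in $J_{x_0x_1}$, so $(x_0x_1)^ku\in J$ for some $k$, and after enlarging $k$ I may assume $w:=(x_0x_1)^ku$ lies in $J_t$ with $t\geq s_0$. If $w\in I$ there is nothing to prove. Otherwise $w\in D_t$, so $w=x_0^{\,t-s_0}m_0$ and $v_0(w)=(t-s_0)+v_0(m_0)$. Pick $N>s_0\geq v_0(m_0)$; then $v_0(x_1^Nw)=v_0(w)<t+N-s_0$, so $x_1^Nw$ cannot be divisible by $x_0^{\,t+N-s_0}$ and therefore cannot belong to $D_{t+N}=x_0^{\,t+N-s_0}D_{s_0}$. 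Since $x_1^Nw\in J_{t+N}$, it follows that $x_1^Nw\in J_{t+N}\setminus D_{t+N}=I_{t+N}$, and multiplying by $x_0^N$ gives $(x_0x_1)^{k+N}u=x_0^N\,x_1^Nw\in I$. Hence $u\in I_{x_0x_1}$, which yields $J_{x_0x_1}\subseteq I_{x_0x_1}$ and completes the proof.

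I expect the main obstacle to be the rigidity identity $D_{t+1}=x_0D_t$: everything hinges on combining the Hilbert-polynomial count $|D_t|=a$ with the saturation of $I$, and then on exploiting that the monomials of $D_t$ carry an $x_0$-power growing with $t$ while their $x_1$-behaviour stays bounded, which is exactly what lets the final $x_1^N$-multiplication push $w$ out of $D$ and into $I$. The other point to pin down carefully is the translation of the $x_1$-saturation into the double colon $\bigcup_k(\,\cdot:(x_0x_1)^k)$, since it is what makes the argument degreewise and insensitive to replacing $I,J$ by their saturations.
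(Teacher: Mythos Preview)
Your proof is correct. The core mechanism---the rigidity $D_{t+1}=x_0\,D_t$ for $D_t=J_t\setminus I_t$ in large degree, and then pushing a monomial out of $D$ by multiplying with a high power of $x_1$---is exactly what drives the paper's argument as well, so the two proofs share the same idea.

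Where you differ is in the packaging. The paper proceeds by induction on $a$: for $a=1$ it uses the Borel property of $I$ directly (if $x_0x^{\alpha}\in I$ then every $x_ix^{\alpha}\in I$, contradicting $|D_{s+1}|=1$) to pin down the single missing term, and for $a>1$ it peels off one term at a time via Proposition~\ref{togliendo}. You instead replace $I,J$ by their saturations up front, translate $x_1$-saturation into the colon $\bigcup_k(\,\cdot:(x_0x_1)^k)$, and then prove $x_0D_t\subseteq D_{t+1}$ for arbitrary $a$ in one stroke using that $x_0$ is a nonzerodivisor on $S/I$; the cardinality match $|D_t|=a$ closes the argument. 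This is a bit more self-contained, since it bypasses Proposition~\ref{togliendo} entirely, and your explicit ``escape'' step ($x_1^Nw$ cannot carry enough $x_0$'s to stay in $D_{t+N}$) makes the conclusion $J_{x_0x_1}\subseteq I_{x_0x_1}$ very transparent. The paper's version, on the other hand, keeps the Borel hypothesis visibly in play and ties the result to the surrounding machinery of minimal elements.

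One small point worth making explicit in your write-up: the identification $A^{sat}=(A:x_0^{\infty})$ that you use (both to reduce to the saturated case and to get $x_0m\in I\Rightarrow m\in I$) is special to Borel ideals (Proposition~\ref{reeves}(i)); for a general monomial ideal it would fail, so this is where the Borel assumption actually enters your argument.
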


\begin{proof}
Let $s\geq \max\{reg(I),reg(J)\}$. In case $a=1$, there exists a unique term in $J_{s+t}\setminus I_{s+t}$, 
for every $t\geq 0$. Let $x^{\alpha}$ be the unique term in $J_s\setminus I_s$. Then, both $x^{\alpha}x_0$ and 
$x^{\alpha}x_1$ belong to $J_{s+1}$. By the Borel property, $x^{\alpha}x_1$ must be in 
$I_{s+1}$ and so the unique term in $J_{s+t}\setminus I_{s+t}$ is $x^{\alpha}x_1^t$. This is enough to say that $I$ and $J$ have the same $x_1$-saturation. If $a>1$, the thesis follows 
by induction applying Proposition \ref{togliendo}.
\end{proof}

\begin{corollary}\label{polinomio minimo}
Let $p(z)$ be an admissible polynomial of degree $h\leq n$ and 
$P:=\{q(z)=p(z)+u \ \vert \ u\in \mathbb Z \text{ and } q(z)  \text{admissible} \}$ 
the set of all admissible polynomials of degree $h$ which differ from $p(z)$ only for an integer. Then
\begin{itemize}
\item[(i)] there is a polynomial $\hat p(z)$ in $P$ such that, for every $q(z)$ in $P$, $q(z)=\hat p(z)+c$ with $c\geq 0$;
\item[(ii)] every saturated Borel ideal $I$ with Hilbert polynomial $p_{S/I}=\hat p(z)$ is $x_1$-saturated.
\end{itemize}
\end{corollary}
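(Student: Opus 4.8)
The plan is to treat the two parts separately: part (i) is a purely combinatorial statement about admissible polynomials, while part (ii) follows from Proposition \ref{differenza} once the minimality in (i) is available.

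For (i), I would work with the unique Macaulay-type normal form recalled in Section 1, writing each $q(z)\in P$ as $q(z)=\sum_{i=0}^{h}\bigl[\binom{z+i}{i+1}-\binom{z+i-m_i}{i+1}\bigr]$ with $m_0\geq m_1\geq\cdots\geq m_h\geq 0$ and $m_h\geq 1$ (so that the degree is exactly $h$). The key observation is that the $i$-th summand has degree exactly $i$, so the coefficients of $q$ in degrees $1,\ldots,h$ determine $m_1,\ldots,m_h$ uniquely, whereas the $i=0$ summand is $\binom{z}{1}-\binom{z-m_0}{1}=m_0$, the only summand contributing to the constant term beyond an amount fixed by $m_1,\ldots,m_h$. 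Since every $q\in P$ shares with $p(z)$ all coefficients in degrees $1,\ldots,h$ (they differ only by a constant), the integers $m_1,\ldots,m_h$ are common to all of them, and passing from $p$ to $q=p+u$ changes only $m_0$, by exactly $u$. Admissibility of $q$ is then equivalent to the single inequality $m_0\geq m_1$, so the admissible shifts are precisely those with $m_0\in\{m_1,m_1+1,\ldots\}$. This set is bounded below, the minimum being attained at $m_0=m_1$; taking $\hat p(z)$ to be the corresponding polynomial yields $q(z)=\hat p(z)+(m_0-m_1)$ with $m_0-m_1\geq 0$ for every $q\in P$, which is (i).

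For (ii), suppose $I\subset S$ is saturated Borel with $p_{S/I}=\hat p$ and let $\tilde I:=I_{x_0x_1}$ be its $x_1$-saturation. By Proposition \ref{differenza}, $p_{S/\tilde I}(z)=\hat p(z)-q$, where $q$ is the sum of the exponents of $x_1$ in the minimal generators of $I$; moreover $I$ is $x_1$-saturated exactly when $q=0$, since being saturated already removes $x_0$ from the generators. As $\tilde I$ is saturated Borel, $p_{S/\tilde I}$ is admissible, and it differs from $p$ only by the integer $(\hat p-p)-q$. The one point requiring care is that $p_{S/\tilde I}$ still has degree $h$, so that it genuinely lies in $P$ and the minimality of $\hat p$ can be applied. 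I would verify this via Proposition \ref{HS}: the equality $\deg p_{S/I}=h$ forces $\min\{i:x_i^t\in I\}=h+1$, and the Borel property then shows that every minimal generator of $I$ has maximal variable of index $\geq h+1$ (if a generator $x^{\alpha}$ had $\max(x^{\alpha})=j\leq h$, then raising all its variables up to $x_j$ moves it upward in $<_B$, giving $x_j^{|\alpha|}\in I$ with $j\leq h$, a contradiction). Deleting $x_1$ from the generators preserves their maximal indices, and the pure power $x_{h+1}^t$ witnessing the dimension is untouched, so $\min\{i:x_i^t\in \tilde I\}=h+1$ as well; hence $p_{S/\tilde I}\in P$.

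Finally, if $I$ were not $x_1$-saturated we would have $q\geq 1$, whence $p_{S/\tilde I}=\hat p-q<\hat p$, contradicting the minimality of $\hat p$ established in (i). Therefore $q=0$ and $I$ is $x_1$-saturated, proving (ii). I expect the main obstacle to be exactly this degree-preservation step: one must rule out that the $x_1$-saturation drops the dimension of the scheme, which is precisely where the combinatorics of Borel sets (raising a variable's index moves a term up in the order $<_B$) enters. The degenerate case $h=0$, in which $x_1$-saturation is not meaningful, should be excluded or handled by direct inspection.
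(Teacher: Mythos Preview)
Your argument is correct, and for part~(i) it follows a genuinely different route from the paper. The paper constructs $\hat p$ indirectly: it takes the common $x_1$-saturation of the lex segment ideals $L(q)$ for $q\in P$ (using Proposition~\ref{$x_1$-saturation} to see they all coincide) and declares $\hat p$ to be its Hilbert polynomial, then argues minimality from there. You instead read $\hat p$ straight off the normal form $q(z)=\sum_{i=0}^{h}\bigl[\binom{z+i}{i+1}-\binom{z+i-m_i}{i+1}\bigr]$, noting that the $i=0$ summand equals $m_0$ while the higher summands are fixed across $P$, so admissibility reduces to the single inequality $m_0\geq m_1$ and the minimum is at $m_0=m_1$. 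This is exactly the alternative the paper alludes to in the remark following the corollary; it is more elementary and self-contained, while the paper's approach has the advantage of exhibiting $\hat p$ concretely as the Hilbert polynomial of an explicit $x_1$-saturated ideal.

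For part~(ii) your argument is essentially the paper's, invoking Proposition~\ref{differenza} and contradicting minimality. One small remark: the degree-preservation check you carry out via Proposition~\ref{reeves}(ii) is correct but unnecessary once $h\geq 1$, since $p_{S/\tilde I}=\hat p-q$ with $q$ a non-negative integer, and subtracting a constant from a degree-$h$ polynomial leaves the degree unchanged; so $p_{S/\tilde I}\in P$ automatically. The only case where this fails is $h=0$, which you correctly flag as degenerate.
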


\begin{proof}
(i) Every admissible polynomial $p(z)$ has a unique saturated lex segment ideal $L(p(z))$. If $H$ is the 
saturated lex segment ideal of $p(z)+u$, then we have $H\subset L(p(z))$ if $u>0$ and 
$L(p(z))\subset H$ if $u<0$. Thus, we can apply Proposition \ref{$x_1$-saturation}, obtaining that $L(p(z))$ and $H$ have 
the same $x_1$-saturation $I$. We claim that $\hat p(z)$ is the Hilbert polynomial of $I$. Indeed, by 
Proposition \ref{differenza} the Hilbert polynomial of $I$ is of type $p(z)-q$. If $\hat p(z)=p(z)-q-t$ with 
$t\geq 0$, then the saturated lex segment ideal of $\hat p(z)$ should have $I$ as $x_1$-saturation 
and should be contained in $I$, that is possible only if $t=0$.

(ii) Let $J$ be a Borel ideal with $\hat p(z)$ as Hilbert polynomial. If $J$ were not $x_1$-saturated, 
by Proposition \ref{differenza} the $x_1$-saturation of $J$ should have a Hilbert polynomial of type 
$\hat p(z)-q$, with $q>0$, against the definition of $\hat p(z)$,
\end{proof}

\begin{definition}\label{def:minimo}\rm
The polynomial $\hat p(z)$ of Corollary \ref{polinomio minimo} is called {\em minimal polynomial}.
\end{definition}

\begin{remark}\rm 
An alternative proof of the previous statement can be obtained by following the construction of the Gotzmann 
number. 
\end{remark}

\begin{example}\rm
By Proposition \ref{differenza}, a Borel ideal with a minimal Hilbert polynomial is $x_1$-saturated. The vice 
versa is not true. For example, the ideal $I=(x_3^2,x_2x_3,x_2^2)\subset K[x_0,x_1,x_2,x_3]$ is $x_1$-saturated 
and is a reg-segment ideal w.r.t. the revlex order. The corresponding Hilbert polynomial is $p_{S/I}(z)=3z+1$ 
which is not minimal because the Borel ideal $(x_3,x_2^3)$ has Hilbert polynomial $3z$. 
\end{example}

\begin{remark}\rm
From the proof of Corollary \ref{polinomio minimo} we deduce the following fact. Let 
$I\subset K[x_0,\ldots,$ $x_n]$ be a Borel ideal with Hilbert polynomial $p(z)$. If 
$I=I_{x_1}\cdot K[x_0,\ldots,x_n]$, where $I_{x_1}\subset K[x_1,\ldots,x_n]$ is the segment ideal w.r.t. 
lex order with Hilbert polynomial $\Delta p(z)$, then $p(z)=\hat p(z)$.
\end{remark}

%%%%%%%%%%%%%%%%%%%%%%%%%%%%%%%%%%%%%%%%%%%%%%%%%%%%
%% An algorithm to compute saturated Borel ideals %%
%%%%%%%%%%%%%%%%%%%%%%%%%%%%%%%%%%%%%%%%%%%%%%%%%%%%

\section{An algorithm to compute saturated Borel ideals}

In this section, by exploiting the arguments of section 4, we describe an algorithm for computing all the saturated Borel ideals with a given Hilbert 
polynomial $p(z)$.  We first give an efficient strategy to find the minimal 
elements in a Borel set $B$, that consists in representing $B$ by a connected planar graph, in which the nodes 
are the terms of $B$ and the edges are the elementary moves which connect the terms. In Figure \ref{esBorel} we 
give some examples showing that it is easy to single out the minimal terms looking at these graphs. 

\begin{figure}
\begin{center} 
 \begin{tikzpicture}[>=latex',line join=bevel,scale=0.8]
\tikzstyle{ideal}=[draw,rectangle,minimum width=1.1cm,minimum height=.55cm]
%\tikzstyle{quotient}=[draw,ellipse,minimum width=1.1cm,minimum height=.55cm]
\tikzstyle{quotient}=[]
\node (N_14) at (248bp,124bp) [quotient] {\small $x_{0}x_{1}x_{2}$};
  \node (P_5) at (424bp,238bp) [ideal] {\small $x_{1}x_{2}x_{3}$};
  \node (P_2) at (399bp,276bp) [quotient] {\small $x_{2}^{2}x_{3}$};
  \node (P_15) at (397bp,86bp) [quotient] {\small $x_{0}x_{1}^{2}$};
  \node (N_10) at (302bp,238bp) [ideal] {\small $x_{0}x_{3}^{2}$};
  \node (P_6) at (375bp,200bp) [quotient] {\small $x_{1}x_{2}^{2}$};
  \node (P_4) at (448bp,276bp) [ideal] {\small $x_{1}x_{3}^{2}$};
  \node (N_11) at (305bp,200bp) [ideal,fill=black!10] {\small $x_{0}x_{2}x_{3}$};
  \node (N_13) at (309bp,162bp) [quotient] {\small $x_{0}x_{1}x_{3}$};
  \node (P_8) at (370bp,162bp) [quotient] {\small $x_{1}^{2}x_{2}$};
  \node (P_3) at (376bp,238bp) [quotient] {\small $x_{2}^{3}$};
  \node (P_7) at (424bp,200bp) [ideal] {\small $x_{1}^{2}x_{3}$};
  \node (P_11) at (480bp,200bp) [quotient] {\small $x_{0}x_{2}x_{3}$};
  \node (P_9) at (371bp,124bp) [ideal] {\small $x_{1}^{3}$};
  \node (P_10) at (478bp,238bp) [ideal] {\small $x_{0}x_{3}^{2}$};
  \node (N_12) at (248bp,162bp) [quotient] {\small $x_{0} x_{2}^{2}$};
  \node (P_12) at (424bp,162bp) [quotient] {\small $x_{0}x_{2}^{2}$};
  \node (P_13) at (485bp,162bp) [ideal] {\small $x_{0}x_{1}x_{3}$};
  \node (P_14) at (424bp,124bp) [quotient] {\small $x_{0}x_{1}x_{2}$};
  \node (N_18) at (248bp,48bp) [quotient] {\small $x_{0}^{2}x_{1}$};
  \node (P_16) at (485bp,124bp) [quotient] {\small $x_{0}^{2}x_{3}$};
  \node (P_17) at (451bp,86bp) [quotient] {\small $x_{0}^{2}x_{2}$};
  \node (P_18) at (424bp,48bp) [quotient] {\small $x_{0}^{2}x_{1}$};
  \node (P_19) at (424bp,11bp) [quotient] {\small $x_{0}^{3}$};
  \node (M_18) at (72bp,48bp)  {\small $x_{0}^{2}x_{1}$};
  \node (N_16) at (309bp,124bp) [quotient] {\small $x_{0}^{2}x_{3}$};
  \node (M_9) at (18bp,124bp)  {\small $x_{1}^{3}$};
  \node (M_8) at (18bp,162bp)  {\small $x_{1}^{2}x_{2}$};
  \node (N_17) at (275bp,86bp) [quotient] {\small $x_{0}^{2} x_{2}$};
  \node (M_3) at (18bp,238bp)  {\small $x_{2}^{3}$};
  \node (M_2) at (45bp,276bp)  {\small $x_{2}^{2}x_{3}$};
  \node (M_1) at (72bp,314bp)  {\small $x_{2}x_{3}^{2}$};
  \node (M_0) at (72bp,351bp)  {\small $x_{3}^{3}$};
  \node (M_7) at (72bp,200bp)  {\small $x_{1}^{2}x_{3}$};
  \node (M_6) at (18bp,200bp)  {\small $x_{1}x_{2}^{2}$};
  \node (M_5) at (72bp,238bp)  {\small $x_{1}x_{2}x_{3}$};
  \node (M_4) at (99bp,276bp)  {\small $x_{1}x_{3}^{2}$};
  \node (N_15) at (221bp,86bp) [quotient] {\small $x_{0}x_{1}^{2}$};
  \node (N_8) at (194bp,162bp) [quotient] {\small $x_{1}^{2}x_{2}$};
  \node (N_9) at (194bp,124bp) [quotient] {\small $x_{1}^{3}$};
  \node (P_1) at (423bp,314bp) [ideal] {\small $x_{2}x_{3}^{2}$};
  \node (N_4) at (274bp,276bp) [ideal] {\small $x_{1}x_{3}^{2}$};
  \node (N_5) at (252bp,238bp) [ideal] {\small $x_{1}x_{2}x_{3}$};
  \node (N_6) at (198bp,200bp) [quotient] {\small $x_{1}x_{2}^{2}$};
  \node (N_7) at (252bp,200bp) [quotient] {\small $x_{1}^{2}x_{3}$};
  \node (N_0) at (252bp,351bp) [ideal] {\small $x_{3}^{3}$};
  \node (N_1) at (252bp,314bp) [ideal] {\small $x_{2} x_{3}^{2}$};
  \node (N_2) at (230bp,276bp) [ideal] {\small $x_{2}^{2} x_{3}$};
  \node (N_3) at (203bp,238bp) [ideal,fill=black!10] {\small $x_{2}^{3}$};
  \node (P_0) at (423bp,351bp) [ideal] {\small $x_{3}^{3}$};
  \node (M_19) at (72bp,11bp)  {\small $x_{0}^{3}$};
  \node (N_19) at (248bp,11bp) [quotient] {\small $x_{0}^{3}$};
  \node (M_17) at (99bp,86bp)  {\small $x_{0}^{2} x_{2}$};
  \node (M_16) at (133bp,124bp)  {\small $x_{0}^{2} x_{3}$};
  \node (M_15) at (45bp,86bp)  {\small $x_{0} x_{1}^{2}$};
  \node (M_14) at (72bp,124bp)  {\small $x_{0}x_{1}x_{2}$};
  \node (M_13) at (133bp,162bp)  {\small $x_{0}x_{1}x_{3}$};
  \node (M_12) at (72bp,162bp)  {\small $x_{0}x_{2}^{2}$};
  \node (M_11) at (133bp,200bp)  {\small $x_{0}x_{2}x_{3}$};
  \node (M_10) at (133bp,238bp)  {\small $x_{0} x_{3}^{2}$};
  \draw [->] (P_9) -- (P_15);
  \draw [->] (N_3) -- (N_6);
  \draw [->] (P_18) -- (P_19);
  \draw [->] (P_11) -- (P_13);
  \draw [->] (N_2) -- (N_3);
  \draw [->] (N_12) -- (N_14);
  \draw [->] (M_1) -- (M_2);
  \draw [->] (M_2) -- (M_5);
  \draw [->] (M_16) -- (M_17);
  \draw [->] (N_9) -- (N_15);
  \draw [->] (P_3) -- (P_6);
  \draw [->] (P_2) -- (P_3);
  \draw [->] (N_4) -- (N_10);
  \draw [->] (P_6) -- (P_8);
  \draw [->] (N_6) -- (N_12);
  \draw [->] (M_1) -- (M_4);
  \draw [->] (P_7) -- (P_8);
  \draw [->] (M_6) -- (M_8);
  \draw [->] (M_13) -- (M_14);
  \draw [->] (M_8) -- (M_14);
  \draw [->] (M_11) -- (M_12);
  \draw [->] (N_4) -- (N_5);
  \draw [->] (P_11) -- (P_12);
  \draw [->] (M_8) -- (M_9);
  \draw [->] (P_16) -- (P_17);
  \draw [->] (N_0) -- (N_1);
  \draw [->] (M_5) -- (M_7);
  \draw [->] (N_5) -- (N_7);
  \draw [->] (N_15) -- (N_18);
  \draw [->] (P_13) -- (P_16);
  \draw [->] (M_13) -- (M_16);
  \draw [->] (M_14) -- (M_15);
  \draw [->] (M_6) -- (M_12);
  \draw [->] (M_15) -- (M_18);
  \draw [->] (M_7) -- (M_13);
  \draw [->] (N_14) -- (N_15);
  \draw [->] (N_2) -- (N_5);
  \draw [->] (M_12) -- (M_14);
  \draw [->] (N_13) -- (N_14);
  \draw [->] (M_2) -- (M_3);
  \draw [->] (P_5) -- (P_7);
  \draw [->] (P_8) -- (P_14);
  \draw [->] (M_17) -- (M_18);
  \draw [->] (P_7) -- (P_13);
  \draw [->] (P_14) -- (P_17);
  \draw [->] (N_11) -- (N_13);
  \draw [->] (N_8) -- (N_9);
  \draw [->] (M_18) -- (M_19);
  \draw [->] (M_5) -- (M_11);
  \draw [->] (N_1) -- (N_4);
  \draw [->] (P_8) -- (P_9);
  \draw [->] (M_10) -- (M_11);
  \draw [->] (P_15) -- (P_18);
  \draw [->] (N_16) -- (N_17);
  \draw [->] (P_5) -- (P_6);
  \draw [->] (N_7) -- (N_8);
  \draw [->] (M_5) -- (M_6);
  \draw [->] (N_5) -- (N_6);
  \draw [->] (N_10) -- (N_11);
  \draw [->] (M_4) -- (M_10);
  \draw [->] (N_14) -- (N_17);
  \draw [->] (P_5) -- (P_11);
  \draw [->] (M_3) -- (M_6);
  \draw [->] (N_1) -- (N_2);
  \draw [->] (P_4) -- (P_10);
  \draw [->] (N_7) -- (N_13);
  \draw [->] (P_6) -- (P_12);
  \draw [->] (N_17) -- (N_18);
  \draw [->] (P_17) -- (P_18);
  \draw [->] (M_0) -- (M_1);
  \draw [->] (N_18) -- (N_19);
  \draw [->] (M_14) -- (M_17);
  \draw [->] (P_13) -- (P_14);
  \draw [->] (M_4) -- (M_5);
  \draw [->] (M_7) -- (M_8);
  \draw [->] (N_11) -- (N_12);
  \draw [->] (N_8) -- (N_14);
  \draw [->] (N_13) --  (N_16);
  \draw [->] (P_1) --  (P_2);
  \draw [->] (N_6) -- (N_8);
  \draw [->] (P_14) -- (P_15);
  \draw [->] (P_4) -- (P_5);
  \draw [->] (P_1) -- (P_4);
  \draw [->] (N_5) -- (N_11);
  \draw [->] (P_10) -- (P_11);
  \draw [->] (M_9) -- (M_15);
  \draw [->] (M_11) -- (M_13);
  \draw [->] (P_2) -- (P_5);
  \draw [->] (P_0) -- (P_1);
  \draw [->] (P_12) -- (P_14);
\end{tikzpicture}
\caption{\label{esBorel} Here are: on the left the graph of $K[x_0,\ldots,x_3]_3$, in the center the graph of $(x_3^2,x_2x_3,x_2^3)_3$, where we coloured the minimal elements, on the right the graph of $(x_3^2,x_1x_3,x_1^3)_3$ which is not Borel (the terms in the ideal are the boxed ones).} 
\end{center}
\end{figure}

Let $0\leq k<n$ be an integer. Recall that, if $I\subset K[x_k,\ldots,x_n]$ is a saturated Borel ideal 
which has a non null Hilbert polynomial $p(z)$ with Gotzmann number $r$, then $J:= \frac{(I,x_k)}{(x_k)}\subset K[x_{k+1},\ldots,x_n]$ 
has Hilbert polynomial $\Delta p(z)$, being $x_k$ a non zero-divisor on $\frac{K[x_k,\ldots,x_n]}{I}$. 

This fact tells that every saturated Borel ideal $I\subset K[x_k,\ldots,x_n]$ with Hilbert polynomial $p(z)$ 
\lq\lq comes from\rq\rq\ a Borel ideal $J\subset K[x_{k+1},\ldots,x_n]$ with Hilbert polynomial $\Delta p(z)$ 
and generated in degrees $\leq r$. So, our idea to construct all saturated Borel ideal with given Hilbert 
polynomial $p(z)$ consists in applying a recursion on the number of variables: in the hypothesis of knowing all 
Borel ideals $J$ in $n-k$ variables generated in degrees $\leq r$ with Hilbert polynomial $\Delta p(z)$, we 
construct the saturated Borel ideals $I$ in $n-k+1$ variables such that $J:= \frac{(I,x_k)}{(x_k)}$ for some of 
the ideals $J$.

Let $J\subset K[x_{k+1},\ldots,x_n]$ be a Borel ideal with Hilbert polynomial $\Delta p(z)$ and 
$\overline{I}:=$ $(J^{sat}\cdot K[x_k,\ldots,x_n])_r$, where $r$ is the Gotzmann number of $p(z)$. Let 
$\overline{N}$ be the set of terms $x^{\alpha}$ of $K[x_k,\ldots,x_n]_r$ such that there exists a composition 
$F$ of elementary moves of type $\down_j$ and a term $\tau$ of $\mathcal N(J)_r$ such that $F(\tau)=x^{\alpha}$. 
Hence, by construction, the terms of $\overline{N} \setminus \mathcal N(J)$ are not maximal and $\overline N$ is 
contained in the \emph{sous-\'escalier} of any ideal of $K[x_k,\ldots,x_n]$ having $J$ as hyperplane section. 
Note that the Gotzmann number of $\Delta^{k+1} p(z)$ is not higher than the Gotzmann number of $\Delta^k p(z)$.

\begin{lemma}\label{rem:equivalenza}
$\mathcal N(\overline I)_r=\overline N$. 
\end{lemma}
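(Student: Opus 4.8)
The plan is to prove the two inclusions separately, after making explicit what the monomial sets $\overline{I}$, $\mathcal{N}(\overline{I})_r$ and $\overline{N}$ look like. First I would fix a term $x^{\alpha}\in K[x_k,\ldots,x_n]_r$ and write it uniquely as $x^{\alpha}=x_k^a\,\mu$ with $\mu\in K[x_{k+1},\ldots,x_n]$ and $\deg\mu=r-a$. Since every minimal generator of $J^{sat}$ lies in $K[x_{k+2},\ldots,x_n]$ (being obtained from a generator of $J$ by setting $x_{k+1}=1$, as in Proposition \ref{reeves}(i)), such a generator divides $x^{\alpha}$ if and only if it divides $\mu$. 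Hence $x^{\alpha}\in\overline{I}$ if and only if $\mu\in J^{sat}$, so that $\mathcal{N}(\overline{I})_r=\{x_k^a\mu : \mu\in K[x_{k+1},\ldots,x_n]_{r-a},\ \mu\notin J^{sat}\}$.

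The technical heart of the argument, and the step I expect to be the main obstacle, is a \emph{saturation is already visible in degree $r$} statement: for every $\mu$ as above, $\mu\in J^{sat}$ if and only if $x_{k+1}^a\mu\in J$. The nontrivial implication rests on the equality $J_r=(J^{sat})_r$, which is exactly where the choice of $r$ as the Gotzmann number of $p(z)$ (hence $r$ is at least the degree of every generator of $J$, by Proposition \ref{reeves}(iii)) enters. To prove $(J^{sat})_r\subseteq J_r$ I would take a monomial $\nu$ of degree $r$ divisible by a minimal generator $h$ of $J^{sat}$, lift $h$ to the generator $g=x_{k+1}^{b}h\in G(J)$ it comes from (so $b+\deg h=\deg g\leq r$), write $\nu=h\,w$ with $\deg w=r-\deg h\geq b$, and observe that $h\,x_{k+1}^{\deg w}=x_{k+1}^{\deg w-b}\,g\in J$ while $h\,x_{k+1}^{\deg w}\leq_B\nu$; the Borel property of $J$ then forces $\nu\in J$. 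Granting $J_r=(J^{sat})_r$, the equivalence follows: if $x_{k+1}^a\mu\in J$ then $\mu\in J^{sat}$ by the very definition of saturation, and conversely $\mu\in J^{sat}$ yields $x_{k+1}^a\mu\in(J^{sat})_r=J_r$.

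With these facts the inclusion $\mathcal{N}(\overline{I})_r\subseteq\overline{N}$ is immediate: if $x^{\alpha}=x_k^a\mu\notin\overline{I}$ then $\mu\notin J^{sat}$, so $x_{k+1}^a\mu\notin J$, i.e. $x_{k+1}^a\mu\in\mathcal{N}(J)_r$; since $x^{\alpha}=(\down_{k+1})^a(x_{k+1}^a\mu)$ is obtained from a term of $\mathcal{N}(J)_r$ by a composition of moves $\down_{k+1}$, it lies in $\overline{N}$.

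For the reverse inclusion $\overline{N}\subseteq\mathcal{N}(\overline{I})_r$ I would argue by contradiction. Let $x^{\alpha}=x_k^a\mu\in\overline{N}$, so $x^{\alpha}\leq_B\tau$ for some $\tau\in\mathcal{N}(J)_r$. The remaining ingredient is that replacing the $a$ factors $x_k$ by $x_{k+1}$ keeps us below $\tau$, i.e. $x_{k+1}^a\mu\leq_B\tau$; this is clear from the description of $<_B$ through the partial sums $\sum_{i\geq\ell}$ of the exponents, since $x^{\alpha}$ and $x_{k+1}^a\mu$ differ only in the sum at $\ell=k+1$, which for $x_{k+1}^a\mu$ equals $r=\sum_{i\geq k+1}\tau_i$, while all other partial sums are unchanged and already bounded by those of $\tau$. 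Now if $x^{\alpha}$ were in $\overline{I}$ we would have $\mu\in J^{sat}$, hence $x_{k+1}^a\mu\in J$ by the degree-$r$ statement; combined with $x_{k+1}^a\mu\leq_B\tau$ and the Borel property of $J$ this would give $\tau\in J$, contradicting $\tau\in\mathcal{N}(J)_r$. Therefore $x^{\alpha}\notin\overline{I}$, and the equality $\mathcal{N}(\overline{I})_r=\overline{N}$ follows.
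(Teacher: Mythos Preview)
Your proof is correct and rests on the same key manoeuvre as the paper's: given $x^{\alpha}=x_k^{a}\mu$, pass to $x_{k+1}^{a}\mu=(\up_k)^{a}(x^{\alpha})\in K[x_{k+1},\ldots,x_n]_r$ and decide membership there. The paper phrases this as ``every degree-$r$ term lies in $\overline{I}$ or in $\overline{N}$'', which gives $\mathcal N(\overline I)_r\subseteq\overline N$; the opposite inclusion is taken from the sentence preceding the lemma (``$\overline N$ is contained in the sous-\'escalier of any ideal having $J$ as hyperplane section''), so the paper is terse precisely where you are explicit. Your careful verification that $(J^{\sat})_r=J_r$ and your partial-sum argument for $x_{k+1}^{a}\mu\leq_B\tau$ fill in details the paper leaves to the reader, but the underlying strategy is the same.
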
 

\begin{proof}
It is enough to show that $K[x_k,\ldots,x_n]_r=(\overline I,\overline N)$ (Figure \ref{partizionamento}). 
Indeed, let $x^\gamma=x_k^{\gamma_k} \cdots x_n^{\gamma_n}$ be in $K[x_k,\ldots,x_n]_r$. The term 
$x^\beta=(\up_k)^{\gamma_k}(x^\gamma)$ belongs to $K[x_{k+1},\ldots,x_n]_r$, hence is in $J_r$ or in 
$\mathcal N(J)_r$. If $x^\beta$ is in $J_r$, then $x_{k+2}^{\gamma_{k+2}}\cdots x_n^{\gamma_n}$ belongs to 
$J^{sat}_r$, hence to $\overline I$, otherwise $x^\gamma=(\down_{k+1})^{\gamma_k}(x^\beta)$ is in $\overline{N}$.
\end{proof}

\begin{figure}[!h]
\begin{center}
\setlength{\unitlength}{1cm}
 \begin{picture}(10,5)(-5,-2.5)
\thicklines
\qbezier(-1.25, 1.5)(-1.25, 1.6657)(-1.7626, 1.7828)
\qbezier(-1.7626, 1.7828)(-2.2751, 1.9)(-3.0, 1.9)
\qbezier(-3.0, 1.9)(-3.7249, 1.9)(-4.2374, 1.7828)
\qbezier(-4.2374, 1.7828)(-4.75, 1.6657)(-4.75, 1.5)
\qbezier(-4.75, 1.5)(-4.75, 1.3343)(-4.2374, 1.2172)
\qbezier(-4.2374, 1.2172)(-3.7249, 1.1)(-3.0, 1.1)
\qbezier(-3.0, 1.1)(-2.2751, 1.1)(-1.7626, 1.2172)
\qbezier(-1.7626, 1.2172)(-1.25, 1.3343)(-1.25, 1.5)

\qbezier(4.75, 1.5)(4.75, 1.6657)(4.2374, 1.7828)
\qbezier(4.2374, 1.7828)(3.7249, 1.9)(3.0, 1.9)
\qbezier(3.0, 1.9)(2.2751, 1.9)(1.7626, 1.7828)
\qbezier(1.7626, 1.7828)(1.25, 1.6657)(1.25, 1.5)
\qbezier(1.25, 1.5)(1.25, 1.3343)(1.7626, 1.2172)
\qbezier(1.7626, 1.2172)(2.2751, 1.1)(3.0, 1.1)
\qbezier(3.0, 1.1)(3.7249, 1.1)(4.2374, 1.2172)
\qbezier(4.2374, 1.2172)(4.75, 1.3343)(4.75, 1.5)

\thinlines
\put(-1.25,1.5){\line(0,-1){3.5}}
\put(4.75,1.5){\line(0,-1){3.5}}
\put(-4.75,1.5){\line(1,-1){3.5}}
\put(1.25,1.5){\line(1,-1){3.5}}

\put(-4.1,2.1){$K[x_{k+1},\ldots,x_n]$}
\put(-5,-0.65){$K[x_k,\ldots,x_n]$}
\put(-2,-0.55){\circle*{0.1}}
\put(-2,-0.5){\vector(0,1){.4}}
\put(-2,-0.1){\vector(0,1){.4}}
\put(-2,.4){\line(0,1){.2}}
\put(-2,.7){\line(0,1){.2}}
\put(-2,1){\vector(0,1){.4}}
\put(-2,1.4){\circle*{0.1}}
\put(-3,0.5){$(\up_k)^{\gamma_k}$}

\put(3.2,1.85){\line(1,-2){0.35}}

\qbezier(3.2, 0.0)(3.2, -0.2485)(3.2513, -0.4243)
\qbezier(3.2513, -0.4243)(3.3025, -0.6)(3.375, -0.6)
\qbezier(3.375, -0.6)(3.4475, -0.6)(3.4987, -0.4243)
\qbezier(3.4987, -0.4243)(3.55, -0.2485)(3.55, 0.0)

\put(3.2,0){\line(0,1){1.85}}
\put(3.55,0){\line(0,1){1.15}}

\put(4,1.6){\line(2,1){1}}
\put(4.3,0){\line(2,-1){1}}
\put(2.5,1.6){\line(-2,1){1}}
\put(2.6,0.5){\line(-2,-1){1}}

\put(5.1,2){$\mathcal N(J)_r$}
\put(5.4,-0.7){$\overline{N}$}
\put(1.25,2){$J$}
\put(1.3,-0.15){$\overline{I}$}
\end{picture}
\end{center}
\caption{\label{partizionamento} Partition of $K[x_k,\ldots,x_n]$.}
\end{figure}

\begin{proposition}\label{togliere}
With the above notation, the Hilbert polynomial $\overline{p}(z)$ for $\overline{I}$ differs from $p(z)$ only for a constant. If 
$\overline{q}=p(r)-\overline{p}(r)> 0$, execute the following instruction $\overline{q}$ times: 
select a minimal term $\tau$ in $\overline{I}_r$ and set $\overline{I}:=( G((\bar{I}_r))\setminus \{\tau\})$. 
After these $\overline{q}$ steps, the new ideal obtained has Hilbert polynomial $p(z)$.
\end{proposition}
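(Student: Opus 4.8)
The plan is to separate the statement into two claims — that $\overline{p}(z)$ and $p(z)$ differ only by a constant, and that the loop raises the Hilbert polynomial to exactly $p(z)$ — and, for the second, to verify that the hypotheses of Proposition \ref{togliendo} stay in force along the iteration. Set $R:=K[x_k,\ldots,x_n]$. For the first claim I would note that, since $J^{sat}$ is a saturated Borel ideal, by Proposition \ref{reeves}(iii) its regularity equals the top degree of its minimal generators, which is bounded by the Gotzmann number of $\Delta p(z)$ and hence by $r$; therefore $J^{sat}R$ is generated in degrees $\leq r$ and coincides with its truncation $\overline{I}$ in every degree $\geq r$. As $R/J^{sat}R\cong\big(K[x_{k+1},\ldots,x_n]/J^{sat}\big)[x_k]$, its Hilbert function is the sequence of partial sums of that of $K[x_{k+1},\ldots,x_n]/J^{sat}$, so $\Delta\overline{p}=\Delta p$ and consequently $\overline{p}(z)=p(z)-\overline{q}$ for some constant $\overline{q}\in\mathbb{Z}$. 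Moreover $\overline{I}$ is the largest Borel ideal having $J$ as hyperplane section — by construction $\overline{N}\subseteq\cn(I')_r$ for every such $I'$, and $\cn(\overline{I})_r=\overline{N}$ by Lemma \ref{rem:equivalenza} — so $\overline{p}$ is minimal among the attainable Hilbert polynomials and $\overline{q}=p(r)-\overline{p}(r)\geq 0$.

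For the loop I would show that each pass increases the Hilbert polynomial by exactly one. Removing a $<_B$-minimal term $\tau$ of $\overline{I}_r$ that is divisible by $x_k$ and applying Proposition \ref{togliendo}, with $x_k$ playing the role of the smallest variable $x_0$ and $s=r$, yields a Borel ideal whose Hilbert polynomial is increased by one. Two hypotheses must be monitored. First, $s=r$ must dominate the Gotzmann number of the \emph{current} Hilbert polynomial: every intermediate polynomial is $p(z)-c$ with $0\leq c\leq\overline{q}$, and shrinking the constant term only shortens the Gotzmann representation (compare Example \ref{numeroGgrado1}), so each such polynomial has Gotzmann number $\leq r$. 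Second, the term removed must be divisible by $x_k$: this is exactly what leaves the hyperplane section — and therefore the non-constant part of the Hilbert polynomial — unchanged, so that the successive increments are purely constant and, after $\overline{q}$ passes, the Hilbert polynomial has climbed from $\overline{p}(z)=p(z)-\overline{q}$ up to $p(z)$.

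It remains to guarantee that, as long as the current Hilbert polynomial is still $<p(z)$, a $<_B$-minimal term divisible by $x_k$ is indeed available. The key combinatorial observation is that no $x_k$-divisible term is $>_B$ an $x_k$-free one: a composition of moves $\down_j$ applied to a term containing $x_k$ can never remove that $x_k$, so every term below an $x_k$-divisible term is again $x_k$-divisible. Hence, whenever $\overline{I}_r$ still contains an $x_k$-divisible term, a $<_B$-minimal element lying below it is $x_k$-divisible, and is thus a legitimate choice for $\tau$. Since the removals leave the $x_k$-free part of $\overline{I}_r$ frozen equal to $J_r$, the terms that can (and must) be removed are precisely the $\overline{q}$ excess $x_k$-divisible terms, so the supply never runs out before the count $\overline{q}$ is reached.

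The main obstacle I anticipate is exactly this last existence point. At degree $s=r$ Proposition \ref{cicca1} no longer forces $<_B$-minimal terms to be divisible by $x_k$ — minimal generators of $J^{sat}$ sitting in degree $r$ are $x_k$-free — so one cannot simply remove an arbitrary minimal term without risking a change of the hyperplane section; it is the rigidity encoded in the observation above (and in Lemma \ref{rem:equivalenza}) that singles out the correct, $x_k$-divisible, terms and makes the count come out to exactly $\overline{q}$.
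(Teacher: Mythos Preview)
Your proof is correct and follows exactly the route the paper intends: the paper's own proof is the single sentence ``The theses follow from the results presented in section 4,'' and you have unpacked precisely those results --- Remark~\ref{polinomio} (and the $\Delta$-argument behind it) for the first claim, and Proposition~\ref{togliendo} iterated $\overline{q}$ times for the second, together with the bookkeeping on Gotzmann numbers that keeps its hypothesis $s\geq r$ in force. Your explicit handling of the $x_k$-divisibility of the removed term is a genuine point that the paper leaves implicit; your combinatorial observation that terms $<_B$ an $x_k$-divisible term are again $x_k$-divisible is the clean way to secure it. One small caveat: your side remark that $\overline{q}\geq 0$ always is not justified by the argument you give --- it would need the existence of \emph{some} Borel ideal with hyperplane section $J$ and Hilbert polynomial $p(z)$, which is exactly what the procedure is trying to produce (and is why \textsc{BorelGenerator} carries the explicit test ``If $q\geq 0$'') --- but since the proposition assumes $\overline{q}>0$ this does not affect the proof.
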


\begin{proof}
The theses follow from the results presented in section 4.
\end{proof}

Proposition \ref{togliere} suggests the design of the following two routines \textsc{BorelGenerator} and 
\textsc{Remove}, that have been implemented by the second author in a software with an applet available at 
\url{http://www.dm.unito.it/dottorato/dottorandi/lella/borelEN.html}. 

%\begin{algorithm}[H]\label{procedureGenerate}
\begin{table*}[!h]
\begin{algorithmic}
\Procedure{BorelGenerator}{$n$,$p(z)$,$r$,$k$} $\rightarrow \mathcal F$
\If{$p(z) = 0$} 
\State \Return $\big\{(1)\big\}$;
\Else
\State $\mathcal E \leftarrow$ \Call{BorelGenerator}{$n$,$\Delta p(z)$,$r$,$k+1$}; 
\State $\mathcal F \leftarrow \emptyset$;
\ForAll{$J \in \mathcal E$}
\State $\bar I \leftarrow J \cdot k[x_k,\ldots,x_n]$;
\State $q \leftarrow p(r) - \dim_k k[x_k,\ldots,x_n]_r  + \dim_k \bar I_r$;
\If{$q \geqslant 0$}
\State $\mathcal F \leftarrow \mathcal F\ \cup$ \Call{Remove}{$n$,$k$,$r$,$\bar I$,$q$};
\EndIf
\EndFor
\State \Return $\mathcal F$;
\EndIf
\EndProcedure
\end{algorithmic}

\medskip

%\begin{algorithm}[H]
%\begin{table*}[!h]
\begin{algorithmic}
\Procedure{Remove}{$n$,$k$,$r$,$\bar I$,$q$} $\rightarrow \mathcal E$
\State $\mathcal E \leftarrow \emptyset$;
\If{$q = 0$}
\State \Return $\mathcal E \cup \bar I^\sat$;
\Else
\State $\mathcal F \leftarrow$ \Call{MinimalElements}{$\bar I$,$r$}
\ForAll{$x^\alpha \in \mathcal F$}
 \State $\mathcal E \leftarrow \mathcal E\ \cup$ \Call{Remove}{$n$,$k$,$r$,$(G((\bar I_r))\setminus x^\alpha)$,$q-1$};
\EndFor
\State \Return $\mathcal E$;
\EndIf
\EndProcedure
\end{algorithmic}
\end{table*}
%\end{algorithm}

\begin{remark}\rm\label{q}
The terms removed by our strategy are minimal in $\overline I$. An alternative strategy could consists in adding 
to $J_r K[x_k,\ldots,x_n]$ maximal terms of $\overline I_r\setminus J$. In this case, since we want that 
$\dim_K I_r = \binom{n-k+r}{r} - p(r)$ and we have already $\binom{n-(k+1)+r}{r} - \Delta p(r)$ terms of $J$, 
we should add 
\[
 q' = \binom{n-k+r}{r} - p(r) - \binom{n-(k+1)+r}{r} + \Delta p(r) = \binom{n-k-1+r}{r-1} - p(r-1) 
\]
terms for any $J$, where $q'$ depends only on $r$, $n-k$ and $p(z)$; hence, we will write $q'(r,n-k,p(z))$ 
instead of $q'$. On the other hand, the value of $\overline q = p(r) - \vert\overline N_r\vert= p(0)-\overline p(0)$ 
depends on $J$. Note that $q'+\overline q=\dim_K \overline I_r - \dim_K J_r$. Anyway, we observe that if 
$n-k> \deg(p(z))+1$ then $q'\geq \overline q$. The minimal polynomial $\hat p(z)$ of Definition \ref{def:minimo} 
can be recovered from $\Delta p(z)$ by the decomposition of Gotzmann in the following way. If
\[
\Delta p(z)= \binom{z+b_1}{b_1}+\binom{z+b_2-1}{b_2}+\ldots+\binom{z+b_t-(t-1)}{b_t}
\]
with $b_1\geq b_2\geq\ldots\geq b_t\geq 0$, then
\[
\hat p(z)= \binom{z+a_1}{a_1}+\binom{z+a_2-1}{a_2}+\ldots+\binom{z+a_t-(t-1)}{a_t}
\]
where $a_i=b_i+1$. The Gotzmann number of $\Delta p(z)$ is also the Gotzmann number $\hat r$ of $\hat p(z)$. 
If $r$ is the Gotzmann number of $p(z)$, then $r-\hat r=p(0)-\hat p(0)\geq p(0)-\overline p(0)=\overline q$. 
We prove that $q'\geq\overline q$ by induction on $c=r-\hat r$. If $c=0$ then we get $\overline q=0$. If $c>0$, 
by induction we have that $q'(r-1,n-k,p(z)-1)\geq \overline q-1$, hence
\[
\begin{split}
q'(r,n-k,p(z))&=\binom{r-1+n-k}{n-k}-p(r-1) = \\
& = \binom{r-2+n-k}{n-k}+\binom{r-1+n-k-1}{n-k-1}-p(r-1)+ \\
& \quad + p(r-2)-p(r-2)= \\ 
&=q'(r-1,n-k,p(z)-1)+\binom{r-1+n-k-1}{n-k-1}-\Delta p(r-1)-1 \geq \\
&\geq\overline q+\binom{r-1+n-k-1}{n-k-1}-\Delta p(r-1)-2
\end{split}
\]
and $\binom{r-1+n-k-1}{n-k-1}-\Delta p(r-1)\geq 2$ since $r-1$ is a bound from above of the Gotzmann number of 
$\Delta p(z)$ and $J$ is not a hypersurface because $n-k-1>deg(\Delta p(z))+1$.
\end{remark}

\begin{example}\rm
(a) If $p(z)=d$, then $r=d$ and $\hat r=0$, so $\overline q=d$ and $q'=\binom{d-1+n}{n}-d$. If moreover 
$n=\deg(p(z))+1$, then $q'=0$. 

(b) The Gotzmann number of $p(z)=3z+1$ is $r=4$ and, if $n=3$ and $k=0$, then $q'(r,n,p(z))=\binom{r-1+n}{n}-p(r-1)=20-10=10$ 
and $r-\hat r=1$. If $J_4=(x_3,x_2^3)_4$ we get $\vert\overline N_r \vert=12$ and $\overline q=1$, meanwhile if 
$J_4=(x_3^2,x_2x_3,x_2^2)$ we obtain $\vert\overline N_r \vert=13$ and $\overline q=0$. 
\end{example}

%%%%%%%%%%%%%%%%%%%%%%%%%%%%%%%%%
%% Reverse Lexicographic point %%
%%%%%%%%%%%%%%%%%%%%%%%%%%%%%%%%%

\section{Reverse Lexicographic point}

In this section exploiting results of \cite{LR} we study the points corresponding to hilb-segment ideals in 
the Hilbert scheme $\hilbd$ of subschems of $\mathbb P^n$ with Hilbert polynomial $p(z)=d$, where $d$ is a 
fixed positive integer. Recall that for $p(z)=d$ the Gotzmann number is $d$ itself.

\emph{From now}, $J\subset S$ is a hilb-segment ideal with respect to some term order $\preceq$ and with 
Hilbert polynomial $p(z)=d$ and let $\mathcal B:=\{x^{\beta} \in \mathcal N(J)_d \ : \ x_1x^{\beta} \in J \}$. 
Recall that $G(I)$ denotes the set of minimal generators of $I$ and $ed(St_h(J,\preceq))$ is the embedding 
dimension of the Gr\"obner stratum $St_h(J,\preceq)$.

\begin{lemma}\label{lemma d}
With the notation above, we obtain that $ed(St_h(J,\preceq)) \geq \vert G(J)\vert \cdot \vert \mathcal B \vert$.
\end{lemma}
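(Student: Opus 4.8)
The plan is to compute $ed(St_h(J,\preceq))$ as the dimension of the Zariski tangent space at the origin and to exhibit $|G(J)|\cdot|\mathcal B|$ linearly independent tangent vectors. By Theorems 3.6(ii) and 4.3 of \cite{LR}, $ed(St_h(J,\preceq))=\dim_K T$, where $T$ is canonically the space of degree-$0$ homomorphisms $\phi\colon J\to S/J$ that are $\preceq$-lowering, i.e. for every $x^\alpha\in G(J)$ the element $\phi(x^\alpha)\in(S/J)_{\deg\alpha}$ is a combination of terms of $\mathcal N(J)$ strictly $\prec x^\alpha$. For a fixed generator I would look at the homomorphisms \emph{supported} on $x^\alpha$ (those vanishing on every other minimal generator): such a $\phi$ is determined by $h:=\phi(x^\alpha)$, and it extends to a homomorphism exactly when $p\,h\in J$ for every $x^\alpha$-component $p$ of a first syzygy of $J$. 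Writing $\mathfrak a_\alpha\subseteq S$ for the ideal generated by these components and $V_\alpha:=[(J:\mathfrak a_\alpha)/J]_{\deg\alpha}$, homomorphisms supported on distinct generators are automatically independent, so $\dim_K T\ge\sum_{x^\alpha\in G(J)}\dim_K V_\alpha$; it thus suffices to prove $\dim_K V_\alpha\ge|\mathcal B|$ for each $x^\alpha$.

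Two features of the hypotheses streamline $V_\alpha$. Since $J$ is a hilb-segment, Lemma \ref{gradi} shows $J_{\deg\alpha}$ is a segment, so every term of $\mathcal N(J)_{\deg\alpha}$ is $\prec x^\alpha$ and the lowering condition is automatic, imposing nothing beyond $h\in(S/J)_{\deg\alpha}$. Moreover $J$ is saturated Borel, so by Proposition \ref{reeves}(i) its minimal generators are free of $x_0$; hence every syzygy-component $p$ is free of $x_0$, giving $\mathfrak a_\alpha\subseteq(x_1,\dots,x_n)$. By the Borel property the condition $p\,h\in J$ for such $p$ is implied by $x_1h\in J$, whence $V_\alpha\supseteq[(J:(x_1,\dots,x_n))/J]_{\deg\alpha}$.

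Next I would pin down $|\mathcal B|$ through the Artinian reduction $A:=S/(J,x_0)$. By Proposition \ref{HS} the ideal $J$ has Krull dimension $1$, so $x_0$ is a nonzerodivisor on $S/J$ and, the Hilbert polynomial being constant, $A$ is Artinian. A term $x^\beta=x_0^a\tau\in\mathcal N(J)_d$ (with $\tau$ free of $x_0$) lies in $\mathcal B$ precisely when $x_1\tau\in J$, i.e. when the monomial $\tau$ is killed by $x_1$ in $A$; since $a=d-\deg\tau$ is recovered from $\tau$, this gives a bijection between $\mathcal B$ and the monomial basis of $\ker(x_1\colon A\to A)$, so that
\[
|\mathcal B|=\dim_K\ker\bigl(x_1\colon A\to A\bigr)=\dim_K\bigl[(J:(x_1,\dots,x_n))/J\bigr]_{s}\qquad(s\ge\rho_H).
\]
In particular, for every generator with $\deg\alpha\ge\rho_H$ the inclusion of the previous paragraph already yields $\dim_K V_\alpha\ge|\mathcal B|$.

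The main obstacle is the remaining range $\deg\alpha<\rho_H$, where the crude inclusion $V_\alpha\supseteq[(J:(x_1,\dots,x_n))/J]_{\deg\alpha}$ is too weak, its degree-$\deg\alpha$ part possibly being smaller than $|\mathcal B|$. Here one must exploit that $\mathfrak a_\alpha$ is correspondingly \emph{small}: its generators, read off from the Eliahou--Kervaire syzygies of the Borel ideal $J$, sit in high degrees, so that after multiplication they land in $J$ and $(J:\mathfrak a_\alpha)$ grows. Quantitatively, as multiplication by $x_0^{\,d-\deg\alpha}$ is injective on $S/J$, one has $\dim_K V_\alpha=\dim_K\bigl(x_0^{\,d-\deg\alpha}(S/J)_{\deg\alpha}\cap[(J:\mathfrak a_\alpha)/J]_d\bigr)$, and a Hilbert-function estimate for this intersection inside $(S/J)_d$ restores $\dim_K V_\alpha\ge|\mathcal B|$. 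Granting the bound for all generators and summing over $G(J)$ gives $ed(St_h(J,\preceq))=\dim_K T\ge|G(J)|\cdot|\mathcal B|$, as claimed; the delicate bookkeeping of $\mathfrak a_\alpha$ via the combinatorics of $J$ is the technical heart of the argument.
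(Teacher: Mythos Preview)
Your reformulation of the embedding dimension via $\preceq$-lowering degree-$0$ homomorphisms $J\to S/J$ is reasonable, and the decomposition $\dim_K T\ge\sum_{x^\alpha\in G(J)}\dim_K V_\alpha$ together with the identification $|\mathcal B|=\dim_K\mathrm{socle}(A)$ is correct. The difficulty is that you do not close the argument: for generators with $\deg\alpha<\rho_H$ you only say that ``a Hilbert-function estimate for this intersection \ldots\ restores $\dim_K V_\alpha\ge|\mathcal B|$'', and then proceed ``granting the bound''. That estimate is never supplied, and it is precisely the content of the lemma in that range. Such low-degree generators certainly occur (e.g.\ for the lex hilb-segment $J=(x_n,\dots,x_2,x_1^d)$ one has $\rho_H=d-1$ and linear generators), so this is a genuine gap, not a cosmetic one.

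The paper sidesteps this problem entirely by working uniformly in degree $d$. Instead of the minimal generators $x^\gamma\in G(J)$ at their natural degrees, it passes---via Corollary~4.8(i) of \cite{LR}---to the lifts $x^\alpha=x^\gamma x_0^{\,d-|\gamma|}$, one for each $x^\gamma\in G(J)$, all sitting in $J_d$. Now $\mathcal B\subset\mathcal N(J)_d$ already lives in the right degree, the hilb-segment hypothesis makes every $x^\beta\in\mathcal B$ automatically $\prec x^\alpha$, and the defining property $x_1x^\beta\in J$ forces each variable $c_{\alpha\beta}$ with $x^\beta\in\mathcal B$ to be absorbed by a reduction step in every $S$-polynomial computation, hence never to appear as a pivot of $\mathcal L(J)$. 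This yields the $|G(J)|\cdot|\mathcal B|$ free parameters directly, with no case distinction on $\deg\alpha$ and no colon-ideal bookkeeping. If you want to salvage your approach, the cleanest fix is exactly this change of viewpoint: replace the generators of $J$ by their degree-$d$ lifts before counting.
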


\begin{proof} 
With the same notation introduced in section 2, by Corollary 4.8(i) of \cite{LR} it is enough to look at the 
variables $c_{\alpha \beta}$ appearing in the polynomials $F_{\alpha}$ such that $x^{\alpha}=x^{\gamma} x_0^{d-\vert\gamma\vert}$, 
where $x^{\gamma}$ belongs to $G(J)$. More precisely, we need to count the number of such variables which do not 
correspond to a pivot in a Gauss reduction of the generators of $L(J)$ (see also Theorem 4.3 of \cite{LR}). 

First we note that in every $S$-polynomial which involves such an $F_{\alpha}$, the polynomial $F_{\alpha}$ 
itself is multiplied by a term in which at least a variable $x_h$ appears, with $h>0$ (otherwise the other 
polynomial involved in the $S$-polynomial should have $x_0^d$ as initial term).
It is enough to investigate the terms $x^{\beta} x_1$, where $x^{\beta}$ belongs to $\mathcal B$, because, 
if $x^{\beta} x_1$ belongs to $J_{d+1}$, then $x^{\beta} x_h$ belongs to $J_{d+1}$ for any $h>0$. Since $J$ is a 
hilb-segment ideal, every term $x^{\beta}$ of $\mathcal B$ is less than $x^{\alpha}$. By the definition of 
$\mathcal B$, every term $x^{\beta}$ of $\mathcal B$ is always involved in a reduction step so that it does not 
appear in any generator of $L(J)$ (see Criterion 4.6 of \cite{LR}).
The number of such terms is at least $\vert G(J)\vert \cdot \vert \mathcal B \vert$ and we are done.
\end{proof}

\begin{theorem}\label{singular point}
If for the hilb-segment ideal $J$ we have $\vert G(J)\vert \cdot \vert \mathcal B \vert > nd$, then $J$ 
corresponds to a singular point in $\hilbd$.
\end{theorem}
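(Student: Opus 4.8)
The plan is to bound the Zariski tangent space of $\hilbd$ at the point $[J]$ corresponding to $J$ from below by $ed(St_h(J,\preceq))$, to feed in Lemma \ref{lemma d}, and finally to compare with the dimension of $\hilbd$. First I would use the construction recalled in Section 2: by \cite{LR} the Gr\"obner stratum $St_h(J,\preceq)$ is realized as a locally closed subscheme of $\hilbd$ whose origin is precisely $[J]$, since the monomial ideal $J$ has itself as initial ideal with respect to $\preceq$. The first-order deformations of $J$ that change the initial ideal lead out of this stratum, so the inclusion induces an injection of Zariski tangent spaces and hence
\[
\dim T_{[J]}\hilbd \ \geq\ \dim T_0\, St_h(J,\preceq)\ =\ ed(St_h(J,\preceq)),
\]
where the last equality is the definition of embedding dimension and $T_0\,St_h(J,\preceq)$ is the space cut out by the linear ideal $\mathcal L(J)$.

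Next I would merely chain this inequality with Lemma \ref{lemma d} and the hypothesis of the theorem, obtaining
\[
\dim T_{[J]}\hilbd \ \geq\ ed(St_h(J,\preceq))\ \geq\ |G(J)|\cdot|\mathcal B|\ >\ nd.
\]
Reading $T_{[J]}\hilbd$ as the space of global sections of the normal sheaf of the scheme defined by $J$ (the viewpoint of \cite{RS,MS} mentioned in the Introduction), this says that the normal space at $[J]$ is strictly larger than expected.

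Finally, the Hilbert scheme $\hilbd$ of $d$ points of $\mathbb P^n$ has dimension $nd$, so that $\dim_{[J]}\hilbd\leq nd < \dim T_{[J]}\hilbd$. Thus the local ring $\mathcal O_{\hilbd,[J]}$ has embedding dimension strictly larger than its Krull dimension, hence is not regular, and $[J]$ is a singular point. The step I expect to be the main obstacle is the first one: one must ensure that the embedding dimension extracted combinatorially from $\mathcal L(J)$ in Lemma \ref{lemma d} is a lower bound for the \emph{full} tangent space $T_{[J]}\hilbd$, and not merely for the tangent space of the stratum $St_h(J,\preceq)$. This is precisely guaranteed by the realization of the Gr\"obner stratum as a locally closed subscheme of $\hilbd$ through $[J]$, as in \cite{LR}; granted this, the comparison with the classical dimension $nd$ makes the conclusion immediate.
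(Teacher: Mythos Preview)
Your argument has a genuine gap in the last step. You assert that ``the Hilbert scheme $\hilbd$ of $d$ points of $\mathbb P^n$ has dimension $nd$'', and hence $\dim_{[J]}\hilbd\le nd$. This is false in general: for $n\ge 3$ and $d$ large, Iarrobino's examples show that $\hilbd$ can have irreducible components of dimension strictly greater than $nd$. So from $\dim T_{[J]}\hilbd>nd$ alone you cannot conclude singularity; a priori $[J]$ might be a smooth point on a higher-dimensional component.

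The paper's proof avoids this pitfall by invoking two stronger facts from \cite{LR} that you do not use. First, since $J$ is a \emph{hilb}-segment ideal, the stratum $St_h(J,\preceq)$ is an \emph{open} subset of the component $H_{RS}$ (Corollary~6.7 of \cite{LR}), so in particular $\dim St_h(J,\preceq)=nd$. Second, and crucially, Corollary~4.5 of \cite{LR} gives that $[J]$ is smooth in $\hilbd$ if and only if it is smooth in $St_h(J,\preceq)$; combined with the homogeneity of the stratum this is equivalent to $ed(St_h(J,\preceq))=nd$. Lemma~\ref{lemma d} then finishes the proof. Your locally-closed embedding gives only an inequality of tangent spaces, not this equivalence.

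Your approach can be salvaged: since $J$ is Borel, $[J]\in H_{RS}$ by \cite{RA}. If $[J]$ were smooth in $\hilbd$, the local ring would be a regular local ring, hence a domain, so $[J]$ would lie on a unique component---necessarily $H_{RS}$---and then $\dim T_{[J]}\hilbd=\dim H_{RS}=nd$, contradicting your lower bound. But as written, the bare claim $\dim\hilbd=nd$ is incorrect and the proof is incomplete.
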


\begin{proof} 
Let $H_{RS}$ be the unique irreducible component of $\hilbd$ containing the lexicographic point \cite{RS}. 
Recall that $H_{RS}$ has dimension equal to $nd$ and that every Borel ideal belongs to $H_{RS}$ \cite{RA}. 
Since $J$ is a hilb-segment ideal w.r.t. $\preceq$, the Groebner stratum $St_h(J,\preceq)$ is an open subset of 
$H_{RS}$ (Corollary 6.7 of \cite{LR}) and hence $\dim \ St_h(J,\preceq)=nd$. The point $J$ is smooth for 
$\hilbd$ if and only if it is smooth for the Groebner stratum $St_h(J,\preceq)$ (see Corollary 4.5 of \cite{LR}). 
Thus, $J$ is smooth if and only if $ed(St_h(J,\preceq))= nd$ (Corollary 4.5 of \cite{LR}). By Lemma \ref{lemma d} 
the thesis is proved.
\end{proof}

\begin{example}\label{d=7,8}\rm
Let $I$ be the generic initial ideal of $7$ general points in $\mathbb P^3$ w.r.t. revlex order, i.e. the 
(saturated) hilb-segment ideal with Hilbert polynomial $p(z)=7$. We obtain 
$G(I)=\{x_3^2,x_2 x_3,x_2^2,x_1^2 x_3,x_1^2 x_2,x_1^3\}$ and $\mathcal B=\{x_0^5x_1^2,x_0^5x_1x_2,x_0^5x_2^2\}$. 
Thus $\vert G(I)\vert \cdot \vert \mathcal B\vert =6\cdot 3=18<nd=21$. But, as it is shown in \cite{LR}, 
we can compute directly the Gr\"obner stratum of $I_{\geq 7}$ showing that its embedding dimension is $27>nd=21$. 
Actually, in \cite{LR} the authors construct the stratum of $I_{\geq 3}$ which is isomorphic to the stratum of 
$I_{\geq 7}$, obtaining a big improvement of the computation. 

For $8$ points in $\mathbb P^3$, we have $\mathcal N(I)_8=\{x_0^8,x_0^7x_1,x_0^7x_2,x_0^7x_3,
x_0^6x_1^2,x_0^6x_1x_2,x_0^6x_1x_3,$ $x_0^6x_2^3\}$, so that  $\mathcal B=\{x_0^6x_1^2,x_0^6x_1x_2,x_0^6x_1x_3,x_0^6x_2^3\}$ 
and $\vert \mathcal B\vert=4$. Since $\vert G(I)\vert = 7$, we get $\vert G(I)\vert \cdot \vert \mathcal B\vert = 7\cdot 4 = 28>3\cdot 8=24$.
\end{example}

\begin{theorem}\label{revlexpoint}
For every $d>n\geq 3$, the hilb-segment ideal $J$ w.r.t the revlex order corresponds to a singular point in $\hilbd$. 
\end{theorem}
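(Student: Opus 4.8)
The plan is to reduce the statement to the numerical criterion of Theorem~\ref{singular point}. Since $J$ is a hilb-segment ideal w.r.t.\ revlex with $p(z)=d$ and Gotzmann number $r=d$, it suffices to prove the inequality $\vert G(J)\vert\cdot\vert\mathcal B\vert> nd$ and then invoke Theorem~\ref{singular point} (whose proof rests on the estimate $ed(\mathcal St_h(J,\preceq))\geq\vert G(J)\vert\cdot\vert\mathcal B\vert$ of Lemma~\ref{lemma d}). In this way the whole argument becomes a combinatorial estimate of the two factors in terms of the regularity $\rho_H$ of the Hilbert function, i.e.\ the least integer with $\binom{\rho_H+n}{n}\geq d$.

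First I would pin down the shape of the sous-\'escalier. Because $J$ is saturated and $0$-dimensional, multiplication by $x_0$ carries $\mathcal N(J)$ into itself, so for every $t\geq\rho_H$ one has $\mathcal N(J)_t=x_0^{\,t-\rho_H}\,\mathcal N(J)_{\rho_H}$, where $\mathcal N(J)_{\rho_H}$ is the set of the $d$ smallest terms of $\mathbb T_{\rho_H}$ w.r.t.\ revlex (Remark~\ref{term orders}(3)). In particular $\mathcal N(J)_d=x_0\,\mathcal N(J)_{d-1}$, so every term of $\mathcal N(J)_d$ is divisible by $x_0$. Using this, the Borel property, and saturation, a term $x^\beta=x_0\tau\in\mathcal N(J)_d$ lies in $\mathcal B$ (that is, $x_1x^\beta\in J$) if and only if $x_1\tau\notin\mathcal N(J)_d$; tracking the expansion formula of Remark~\ref{espansione} this reduces to $\vert\mathcal B\vert=\#\{\sigma\in\mathcal N(J)_{\rho_H}:x_1\nmid\sigma\}$. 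Since the smallest $\binom{\rho_H-1+n}{n}<d$ terms of $\mathbb T_{\rho_H}$ are exactly $x_0\,\mathbb T_{\rho_H-1}$, the $x_1$-free terms among these already give $\vert\mathcal B\vert\geq\binom{\rho_H+n-2}{n-1}$.

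For the other factor I would use Remark~\ref{numero generatori}(3), which for a reverse segment ideal with constant Hilbert polynomial yields $\vert G(J)\vert\geq\binom{\rho_H+n-1}{n-1}$ (with the exact value $\binom{\rho_H+n}{n-1}$ in the boundary case $\alpha_H=\rho_H+1$). Combining, $\vert G(J)\vert\cdot\vert\mathcal B\vert\geq\binom{\rho_H+n-1}{n-1}\binom{\rho_H+n-2}{n-1}$, and one compares this with $nd\leq n\binom{\rho_H+n}{n}$. The resulting inequality $\binom{\rho_H+n-1}{n-1}\binom{\rho_H+n-2}{n-1}>n\binom{\rho_H+n}{n}$ is a polynomial inequality in $\rho_H$ whose leading behaviour is of order $\rho_H^{\,n-2}/(n-1)!$, and this is precisely where the hypothesis $n\geq 3$ enters: for $n=2$ the two sides are comparable, while for $n\geq 3$ the left-hand side dominates as soon as $\rho_H$ is not too small.

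The hard part will be the \emph{plateau range}: for fixed $\rho_H$ the quantities $\vert G(J)\vert$ and $\vert\mathcal B\vert$ stay essentially constant while $d$ runs through an interval, so the crude bounds can be violated even though the point is still singular. This is already visible for $n=3$, where $(d,n)=(6,3)$ gives $\vert G(J)\vert\cdot\vert\mathcal B\vert=18=nd$ and $(d,n)=(7,3)$ gives $18<21=nd$. To settle these finitely many exceptional pairs I would abandon Theorem~\ref{singular point} and instead compute the embedding dimension of the Gr\"obner stratum directly after replacing $J_{\geq d}$ by the much smaller $J_{\geq\rho_H+1}$, which is legitimate because $\mathcal St_h(J_{\geq d})\cong\mathcal St_h(J_{\geq\rho_H+1})$ by Proposition~\ref{Gstrata}(i); this is exactly the reduction carried out for $d=7$ in Example~\ref{d=7,8}, where the embedding dimension turns out to be $27>21$. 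The proof therefore splits into two ingredients: a uniform binomial estimate valid for all but a bounded set of small $d$, and a finite, explicit reduced-stratum computation for the remaining cases.
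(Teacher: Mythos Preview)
Your overall strategy coincides with the paper's: reduce to the criterion $\vert G(J)\vert\cdot\vert\mathcal B\vert>nd$ of Theorem~\ref{singular point}, bound the two factors by binomials in $\rho_H$ and $n$, and treat the leftovers separately. Your exact identity $\vert\mathcal B\vert=\#\{\sigma\in\mathcal N(J)_{\rho_H}:x_1\nmid\sigma\}$ is in fact cleaner than the paper's ad hoc constructions of elements of $\mathcal B$, and it yields the same lower bound $\vert\mathcal B\vert\geq\binom{\rho_H+n-2}{n-1}$ that the paper uses.

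The genuine gap is in your last paragraph. Your crude inequality
\[
\binom{\rho_H+n-1}{n-1}\binom{\rho_H+n-2}{n-1}>n\binom{\rho_H+n}{n}
\]
is equivalent to $\binom{\rho_H+n-2}{n-1}>\rho_H+n$, and this \emph{fails for every $n\geq3$ when $\rho_H\in\{1,2\}$} (for $\rho_H=2$ the left side is $n$, the right is $n+2$). Hence the set of pairs $(d,n)$ escaping your binomial estimate is infinite: for each $n$ there is the single value $d=n+1$ with $\rho_H=1$ and roughly $\binom{n+1}{2}$ values of $d$ with $\rho_H=2$. You therefore cannot dispose of them by ``a finite, explicit reduced-stratum computation''.

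The paper does not compute these cases one by one; it treats small $\rho_H$ with different, sharper estimates that still work uniformly in $n$. For $\rho_H=1$ one has $d=n+1$, $J=(x_1,\dots,x_n)^2$, $\vert G(J)\vert=\binom{n+1}{2}$ and $\vert\mathcal B\vert=n$, so the product is $n\binom{n+1}{2}>n(n+1)$ for $n\geq3$. For $\rho_H=2$ the paper splits according to whether $d<\binom{n+1}{2}$ (then $\vert G(J)\vert\geq\binom{n+1}{2}>d$ already suffices) or $d$ is larger (then one uses the sharper bound $\vert\mathcal B\vert\geq d-(n+1)$, coming from the fact that every $\sigma\in\mathcal N(J)_{\rho_H}$ with $x_0\nmid\sigma$ contributes to $\mathcal B$). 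Only after these refinements do finitely many truly exceptional pairs remain---essentially $n=3$, $d\in\{7,8\}$---and those are handled by the direct stratum computation of Example~\ref{d=7,8}. Your proposal becomes complete once you insert analogous uniform-in-$n$ arguments for $\rho_H=1$ and $\rho_H=2$; your exact formula for $\vert\mathcal B\vert$ should make these go through at least as smoothly as in the paper.
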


\begin{proof}
In Remark \ref{numero generatori} we observed that $J$ must  have maximal Hilbert function, so that the regularity 
$\rho_H$ of its Hilbert function is the integer such that $\binom{\rho_H-1+n}{n}< d \leq \binom{\rho_H+n}{n}$. 
Moreover, if $d=\binom{\rho_H+n}{n}$ then $\vert G(J)\vert= \binom{\rho_h+n}{n-1}$, otherwise 
$\vert G(J)\vert \geq \binom{\rho_H+n-1}{n-1}$.

If $d=n+1$ then $\rho_H=1$ and $J=(x_1,\ldots,x_n)^2$, so that $\vert G(J)\vert=\binom{2+n-1}{n-1}=\binom{n+1}{2}$. 
Moreover, $\mathcal B$ consists of the terms of type $x_0^{d-1} x_i$ with $i>0$, thus $\vert \mathcal B\vert=n$ 
and the statement is true because $\binom{n+1}{2}\cdot n > n(n+1)$ for every $n\geq 3$.

If $d\geq n+2$ then $\rho_H\geq 2$. 
  
If $d=\binom{\rho_H+n}{n}$, we show that $\vert \mathcal B\vert>\rho_H+1$. If we multiply every term of degree 
$\rho_H$ in the variables $x_1,\ldots,x_n$ by $x_0^{d-\rho_H}$, we obtain terms of degree $d$ that multiplied by 
$x_1$ give $\binom{\rho_H+n-1}{n-1}$ terms which belong to $\mathcal B$. Thus $\vert \mathcal B \vert \geq 
\binom{\rho_H+n-1}{n-1}>\rho_H+1$ and $\vert G(J)\vert \cdot \vert \mathcal B\vert > \frac{dn}{\rho_H+1}\cdot (\rho_H+1)=dn$. 

If $d<\binom{\rho_H+n}{n}$ and $\rho_H\geq 3$, we show that $\vert \mathcal B\vert \geq \rho_H+n$.
Let $x^{\beta}$ any of the $\binom{\rho_H+n-2}{n-1}$ terms of degree $\rho_H-1$ in the variables $x_1,\ldots,x_n$. 
Thus, if $x^{\beta}x_1$ belongs to $J$, then $x^{\beta}x_0^{d-\rho_H+1}$ belongs to $\mathcal B$; otherwise, 
if $x^{\beta}x_1$ does not belong to $I$, then $x^{\beta}x_0^{d-\rho_H}x_1$ belongs to $\mathcal B$. Anyway, 
the term $x^{\beta}x_1^2$ belongs to $J$ because it is not divided by $x_0$ and has degree $\rho_H+1$ and the 
terms of $\mathcal N(I)_{\rho_H+1}$ are all divided by $x_0$. Such terms are all distinct, so that 
$\vert \mathcal B\vert \geq \binom{\rho_H+n-2}{n-1}$. Now it is easy to check that $\binom{\rho_H+n-2}{n-1}\geq \rho_H+n$, 
for every $\rho_H\geq 3$ and $n\geq 3$. Thus, $\vert G(J)\vert \cdot \vert \mathcal B\vert\geq \binom{\rho_H+n-1}{n-1} \cdot (\rho_H+n)> nd$, 
by Remark \ref{numero generatori}(3).

It remains to study the case $\rho_H=2$ in which $\vert G(I)\vert \geq \binom{n+1}{2}$ and $\vert \mathcal B\vert\geq n$, 
because of the arguments above, with $n=\binom{n+1}{n}< d < \binom{2+n}{n}$. If $d<\binom{n+1}{2}$, then 
 we get immediately $\vert G(J)\vert \cdot \vert \mathcal B\vert>nd$. If $\binom{n+1}{2}<d< \binom{n+1}{2}$, all the 
$d$ terms of $\mathcal N(J)_d$ are in $\mathcal B$ except at most the $n+1$ terms divided by $x_0^{d-1}$. Thus, 
in this case $\vert \mathcal B\vert \geq d-(n+1)$, which is $\geq n+2$ except for $n=3$ and $d=7,8$. These last 
two cases have been directly studied in Example \ref{d=7,8}.
\end{proof}

Observing that:
\begin{itemize}
\item[(i)] a segment ideal w.r.t. revlex order gives rise to a singular point in $\hilbd$ and defines a scheme not contained in any hyperplane;
\item[(ii)] a segment ideal w.r.t. lex order gives rise to a smooth point in $\hilbd$ and defines a scheme contained in some hyperplane;
\end{itemize}
one might guess that there is a relationship between the smoothness of a point in $\hilbd$ corresponding to a (saturated) monomial ideal and the presence of linear forms in the ideal. But, the next example (for which we are indebted to G. Floystad) shows that this is not the case.

\begin{example}
(i) Let $I = (x_1^{a_1},\ldots,x_i^{a_i},\ldots,x_n^{a_n})$ be a (saturated monomial) complete intersection ideal defining a $0$-dimensional scheme $\mathbb X$ of degree $d = \prod_i a_i$ in $\mathbb{P}^n$ and let ${\sf z_{I}}$ denote the corresponding point of of $\hilb^{n}_d$. Being $I$ a monomial ideal, ${\sf z_{I}}$ lies in the closure of the lexicographic point component of $\hilbd$  (see for example Corollary 18.30 of \cite{MS}). Using the normal sheaf to $\mathbb X$, we get that the dimension of the tangent space to $\hilbd$ at ${\sf z_{I}}$ is $nd$, coinciding with that of the lexicographic point component. Thus $I$ gives an example of a monomial ideal which does not contain linear forms and corresponds to a smooth point in $\hilbd$. 

(ii) Let $J \subset K[x_1,\ldots,x_n]$ be a saturated monomial ideal giving a singular point ${\sf z_{J}}$ of $\hilb^{n-1}_d$, so that the dimension of the tangent space to $\hilb^{n-1}_d$ in ${\sf z_{J}}$ is $\alpha > (n-1)d$. Taking $\widetilde{J} = ((x_0)+J) \subset K[x_0,\ldots,x_n]$, the dimension of the tangent space to $\hilbd$ in ${\sf z}_{\widetilde{J}}$ is $\alpha + d > (n-1)d + d = nd$, hence ${\sf z}_{\widetilde{J}}$ is singular too.
\end{example}

%%%%%%%%%%%%%%%%%%
%% Bibliografia %%
%%%%%%%%%%%%%%%%%%

\bibliographystyle{amsplain}
%\bibliography{bibCLMR.bib}
\providecommand{\bysame}{\leavevmode\hbox to3em{\hrulefill}\thinspace}
\providecommand{\MR}{\relax\ifhmode\unskip\space\fi MR }
% \MRhref is called by the amsart/book/proc definition of \MR.
\providecommand{\MRhref}[2]{%
  \href{http://www.ams.org/mathscinet-getitem?mr=#1}{#2}
}
\providecommand{\href}[2]{#2}

\end{document}